\documentclass[12pt]{article}
\usepackage{authblk}
\usepackage{fixltx2e}
\usepackage[margin=1.3in]{geometry}

\usepackage{amssymb}
\usepackage{color}
\usepackage{amsmath}
\usepackage{amsthm}
\usepackage{amssymb}
\usepackage{amsmath, amsfonts, bm}
\newtheorem{thm}{Theorem}[section]
\newtheorem{cor}[thm]{Corollary}
\newtheorem{lem}[thm]{Lemma}
\newtheorem{prop}[thm]{Proposition}

\theoremstyle{definition}

\newtheorem{rem}[thm]{Remark}

\newtheorem{example}[thm]{Example}
\theoremstyle{remark}

%\numberwithin{equation}{section}

\newcommand{\T}{\overline{T}}
\newcommand{\TT}{{\mathbb T}}
\newcommand{\Sc}{{\mathbb S}}

\newcommand{\Id}{{\rm Id}}
\newcommand{\Homeo}{\rm Homeo}
\newcommand{\supp}{\rm supp}
\newcommand{\U}{\mathcal U}
\newcommand{\K}{\mathcal K}

\newcommand{\G}{\mathcal G}
\newcommand{\du}{{\rm d}\,}
\newcommand{\Inn}{{\rm Inn}}
\newcommand{\inn}{{\rm inn}}
\newcommand{\Ad}{{\rm Ad}}
\newcommand{\GL}{{\rm GL}}
\newcommand{\SL}{{\rm SL}}
\newcommand{\nc}{{\rm (NC)}}
\newcommand{\Aut}{{\rm Aut}}
\newcommand{\Sub}{{\rm Sub}}

\newcommand{\N}{{\mathbb N}}
\newcommand{\Z}{{\mathbb Z}}
\newcommand{\R}{{\mathbb R}}
\newcommand{\C}{{\mathbb C}}
\newcommand{\Q}{{\mathbb Q}}
\newcommand{\HH}{{\mathbb H}}
\newcommand{\LL}{{\mathbb L}}
\newcommand{\PP}{{\mathbb P}}
\let\ol=\overline
\let\ap=\alpha

\let\mi=\setminus

\title{Distal Actions of Automorphisms \linebreak of Lie Groups $G$ on S\MakeLowercase{ub}$_{G}$}
\author{Riddhi Shah and Alok Kumar Yadav}

\begin{document}
\maketitle
%\title[Distal Actions of Automorphisms on $\Sub_G$]
%{Distal Actions of Automorphisms of Lie Groups $G$ on S\MakeLowercase{ub}$_{G}$}
%\author[R.\ Shah and A.\ K.\ Yadav]{RIDDHI SHAH\\ School of Physical Sciences\\ Jawaharlal Nehru University\\ New Delhi 110067, India\\
%e-mail\textup{: \texttt{riddhi.kausti@gmail.com, rshah@jnu.ac.in}}\\ \\
%\ and\
%{\em ALOK KUMAR YADAV}\\ Department of Mathematical Sciences\\ Indian Institute of Science Education and Research\\ 
%Mohali 140306, Punjab, India\\
%e-mail\textup{: \texttt{alokmath1729@gmail.com}}}

\begin{abstract}
  For a locally compact metrizable group $G$, we study the action of $\Aut(G)$ on $\Sub_G$, the set of closed subgroups of $G$ 
 endowed with the Chabauty topology. Given an automorphism $T$ of $G$, we relate the distality of the $T$-action on $\Sub_G$ 
 with that of the $T$-action on $G$ under a certain condition. If $G$ is a connected Lie group, we characterise the distality of the 
 $T$-action on $\Sub_G$ in terms of compactness of the closed subgroup generated by $T$ in $\Aut(G)$ under certain conditions on 
 the center of $G$ or on $T$ as follows: $G$ has no compact central subgroup of positive dimension or $T$ is unipotent or $T$ is 
 contained in the connected component of the identity in $\Aut(G)$. Moreover, we also show that a connected Lie group $G$ acts 
 distally on $\Sub_G$ if and only if $G$ is either compact or it is isomorphic to a direct product of a compact group and a vector group. 
 All the results on the Lie groups mentioned above hold for the action on $\Sub^a_G$, a subset of $\Sub_G$ consisting of closed 
 abelian subgroups of $G$. \\

\noindent 2020 Mathematics Subject Classification: 37B05 (primary), 22D45, 22E25, 22E15 (secondary)

\end{abstract}

\section{Introduction} 
Let $X$ be a (Hausdorff) topological space and let  $T$ be a homeomorphism of $X$. The map $T$ is said to be distal if for any 
pair of distinct points $x,y \in {X}$, the closure of the double orbit $\{(T^{n}(x),T^{n}(y))\mid n\in{\Z}\}$ in $X\times X$ does 
not intersect the diagonal, i.e.\ for $x,y\in X$ with $x\neq y$, 
$\overline{\{(T^{n}(x),T^{n}(y))\mid n\in{\Z}\}}\cap\{(d,d)\mid d\in{X}\} =\emptyset$. The notion of distality was introduced by 
David Hilbert (see Ellis \cite{E4}, Moore \cite{M9}) and studied by many in different contexts (see Abels \cite{A1, A2}, 
Furstenberg \cite{F6}, Raja-Shah \cite{RaSh10, RaSh19}, Shah \cite{Sh12} and the references cited therein). For a semigroup $\Gamma$ 
which acts on $X$ by homeomorphisms, i.e.\ $\Gamma\to \Homeo(X)$ is a homomorphism, we say that $\Gamma$ acts 
distally on $X$ if for any two distinct points $x,y\in X$, the closure of $\{(\gamma(x), \gamma(y))\mid \gamma\in\Gamma\}$ does 
not intersect the diagonal. Let $G$ be a  locally compact (Hausdorff) group with the identity $e$ and let $T\in\Aut(G)$, the group of 
automorphisms (homeomorphisms which are homomorphisms) of $G$. Then $T$ is distal (on $G$) if and only if the closure of the 
$T$-orbit $\{T^n(x)\mid n\in \Z\}$ of $x$ does not contain the identity $e$ unless $x=e$. If a group $\Gamma$ acts on $G$ by 
automorphisms, then the $\Gamma$-action on $G$ is distal if and only if $e\not\in\ol{\Gamma(x)}$ unless $x=e$. 

Let $G$ be a locally compact (Hausdorff) group and let $\Sub_G$ be the set of all closed subgroups of $G$ equipped with the Chabauty 
topology (see \cite{CC}). Note that $\Sub_G$ is compact and Hausdorff. Also, it is metrizable if $G$ is second countable \cite{BP}. 
For various groups $G$, the space $\Sub_G$ has been identified and studied extensively, e.g.\ $\Sub_{\R^2}$ is homeomorphic to 
$\Sc^4$, the unit sphere in $\R^5$ \cite{PH}, $\Sub_{\R}$ is homeomorphic to $[0,\infty]$ with a compact topology and 
$\Sub_{\Z}$ is homeomorphic to the subspace $\{0\}\cup\{{\frac{1}{n}}\mid n\in\N\}$ of $[0,1]$ with the usual topology. The space of 
$G$-invariant measures on $\Sub_G$ and the subspace of lattices have also been studied extensively. We refer the reader to 
Bridson et al \cite{BHK}, Abert et al \cite{ABBGNRS} and the references cited therein. 

There is a natural group action of $\Aut(G)$ on $\Sub_G$; namely, $(T,H)\mapsto T(H)$, $T\in \Aut(G)$, $H\in\Sub_G$. 
Since the image of $\Aut(G)$ under this action is a large subclass of homeomorphisms of $\Sub_G$, it would be significant to study the 
dynamics of this special subclass. Here, we would like to study the action of an automorphism of $G$ on $\Sub_G$ in terms of distality. 
 We show that the distality of the $T$-action on $\Sub_G$ implies the distality of the $T$-action on $G$  for a large class of 
 locally compact groups $G$; namely the class of those locally compact Hausdorff first countable (metrizable) 
 groups which do not admit nontrivial compact connected normal subgroups (more generally, see Theorem \ref{Comp})

Conversely, not all distal automorphisms of $G$ act distally on $\Sub_G$; see Example \ref{counter}. In fact  
we know that any unipotent automorphism of a connected Lie group $G$ is distal \cite{A1, A2}, but it does not act distally on $\Sub_G$ 
unless it is trivial (see Theorem \ref{unip}).

Ellis \cite{E4} characterised distal maps on compact spaces. If $X$ is compact and $T$ is a homeomorphism of $X$, then $T$ is 
distal if and only if $E(T)$, the closure of $\{T^n\mid n\in{\N}\}$ in $X^X$ is a group, where $X^X$ is endowed with the product topology. 
Furstenberg \cite{F6} has a characterisation of minimal distal maps on compact metric spaces. Here we show that for a large class of 
connected Lie groups $G$, if an automorphism $T$ acts distally on $\Sub_G$, a stronger result holds; namely, $T$ is contained in a 
compact subgroup of $\Aut(G)$. For a class of connected Lie groups $G$ which do not admit any compact central subgroup of positive 
dimension, we characterise automorphisms $T$ which act distally on $\Sub_G$ in terms of compactness of the closed group generated by 
$T$ in $\Aut(G)$ (more generally see Theorem \ref{main}). For connected Lie groups, our results actually hold under a much weaker 
condition: e.g.\ if $T$ acts distally on $\Sub^a_G$, the space of closed abelian subgroups of $G$. Note that $\Sub^a_G$ is closed in 
$\Sub_G$ and hence compact, and it is invariant under the action of $\Aut(G)$. If $T\in\Aut(G)$ acts distally on $\Sub_G$, then it acts 
distally on $\Sub^a_G$. The converse holds for a large class of connected Lie groups as shown in Theorem \ref{main}. 

We now discuss a class of automorphisms satisfying a certain weaker condition than those acting distally on $\Sub_G$. For this, we introduce a 
subclass (NC) of $\Aut(G)$:  An automorphism $T$ of a locally compact group $G$ is said to belong to $\nc$, if for any discrete (closed) nontrivial 
cyclic subgroup $A$ of $G$, $T^{n_k}(A)\not\to \{e\}$ in $\Sub_G$ for any sequence $\{n_k\}\subset\Z$ such that ${n_k}\to\infty$ or ${n_k}\to -\infty$. 

Since the cyclic groups are the most basic kind of groups and if the action of the group generated by an automorphism on $\Sub_G$ 
does not `contract' any nontrivial closed cyclic group to the trivial group $\{e\}$, we call the class of such automorphisms $\nc$, 
where NC stands for `non-contracting'. Calling such an automorphism $T$ itself ``non-contracting'' could be misleading as it could 
imply that the contraction group of $T$ is trivial.  

For $G=\R^n$ and $T=\alpha\,\Id$ for any $\alpha\in \R\setminus \{0,1,-1\}$, does not belong to $\nc$, as for 
any cyclic group $A$ in $\R^n$, either $T^n(A)\to\{e\}$ or $T^{-n}(A)\to\{e\}$ as $n\to\infty$. All Lie groups and all connected locally 
compact non-compact groups admit nontrivial discrete cyclic subgroups. However, there are some totally disconnected groups which 
do not admit such subgroups, e.g.\ $\Q_p$, $p$ a prime, has no nontrivial closed discrete cyclic subgroups. 

If $T$ acts distally on $\Sub^a_G$, then $T\in \nc$. Some of the results, mainly those about the distal actions of $T$ on $\Sub^a_G$, 
for connected Lie groups $G$ without compact central subgroups of positive dimension, are proven under a weaker assumption 
that $T\in \nc$. All automorphisms contained in compact subgroups of $\Aut(G)$ belong to $\nc$. The converse also holds for a 
large class of connected Lie groups; one of the main results shows that if $G$ is a connected Lie group without any compact 
central subgroup of positive dimension, then $T$ belongs to (NC) if and only if $T$ is contained in a compact subgroup of 
$\Aut(G)$ (see Theorem \ref{main})

Note that the projective space $\R\PP^n$ can be identified with $\LL^n$, the set of lines in $\R^{n+1}$, which is a closed subset of 
$\Sub^a_{\R^{n+1}}$ invariant under the action of $\GL(n+1,\R)$. The action of $\GL(n+1, \R)$ on $\R\PP^n$ has been studied 
extensively by many mathematicians. Shah and Yadav in \cite{SY1} show that for $T\in\SL(n+1,\R)$, if the homeomorphism of the 
unit $n$-sphere $\Sc^n$ corresponding to $T$ is distal, then $T$ is contained in a compact subgroup of $\SL(n+1,\R)$ 
(more generally, see Corollary 5 in \cite{SY1}). This in turn implies that if the action of such $T$ on $\R\PP^n$, and hence on 
$\LL^n$ is distal, then $T$ is contained in a compact subgroup of $\SL(n+1,\R)$. It is easy to show that for $T\in \GL(n+1,\R)$, if $T$ 
belongs to $\nc$, then $T$ acts distally on $\LL^n\cong\R\PP^n$ (this also follows from Theorem \ref{main}). The converse is 
not true as $T=\ap\,\Id$ on $\R^{n+1}$, ($|\ap|\ne 0,1$), acts trivially on $\R\PP^n\cong\LL^n$ but it does not belong to $\nc$, 
as observed above. Therefore, in case of automorphisms $T$ of $G=\R^{n+1}$, the statements in Theorem \ref{main} are 
not equivalent to the statement that $T$ acts distally on a smaller non-discrete compact $T$-invariant subset $\LL^n\subset \Sub^a_G$. 

In Theorem \ref{main}, the condition on the center of $G$ is necessary as illustrated by the example in Remark \ref{rem1}. Moreover, 
Example \ref{ex} shows that for a certain group $G$, there are nontrivial automorphisms contained in a connected unipotent 
subgroup of $(\Aut(G))^0$ which belong to (NC), where $(\Aut(G))^0$ is the connected component of the identity in $\Aut(G)$ 
with respect to the compact-open topology. Therefore, the condition on the center in Theorem \ref{main} is also necessary even if we restrict 
to the subclass of unipotent automorphisms or those  belonging to $(\Aut(G))^0$.  However for these subclasses, we show that a part of 
Theorem \ref{main} can be generalised to any connected Lie group $G$, i.e.\ $T\in (\Aut(G))^0$ acts distally on $\Sub^a_G$ if 
and only if $T$ is contained in a compact subgroup of $\Aut(G)$  (see Theorem \ref{aut0}). We also show that for a unipotent 
automorphism $T$ of $G$, $T$ acts distally on $\Sub^a_G$ if and only if $T=\Id$, the identity map of $G$ 
(see Theorem \ref{unip}). 

The group $G$ acts on $\Sub_G$ via $\Inn(G)$, the group of inner automorphisms of $G$. We study this action and  characterise the
class of connected Lie groups $G$ which act distally on $\Sub^a_G$. Namely, we show that $G$ acts distally on $\Sub^a_G$ if and only if 
every inner automorphism of $G$ acts distally on $\Sub^a_G$, and that the latter statement is equivalent to the statement that $G$ is
either compact or $G=\R^n\times K$, where $K$ is a compact group and $n\in\N$ (more generally, see Corollary \ref{pt-distal}).

Baik and Clavier in \cite{BC2} show that for $G={\rm PSL}(2, \C)$, a specific subspace of $\Sub^a_G$ is homeomorphic to the one-point compactification of 
$\Sc^2\times\R^4$. They also describe the space which is the closure of all cyclic subgroups of $G$, where $G={\rm PSL}(2, \R)$ 
or $G={\rm PSL}(2, \C)$ in \cite{BC1} and \cite{BC2} respectively. Bridson, de la Harpe and Kleptsyn in \cite{BHK} describe the structure 
of $\Sub^a_\HH$ and various other subspaces of $\Sub_\HH$, for the 3-dimensional Heisenberg group $\HH$, and they also study 
and describe the action of $\Aut(G)$ on some of these spaces in detail. One can correlate the image of $\Aut(G)$ to a subclass of 
homeomorphisms of these spaces, and our results imply in particular that such a homeomorphism is distal if and only if it generates 
a compact group (as a closed group) in $\Aut(G)$, for a large class of connected Lie groups $G$ which include those mentioned above 
from \cite{BC1, BC2, BHK}. 

Throughout, we will assume that all groups are locally compact and Hausdorff. We will often assume that they are second countable. 
A locally compact Hausdorff group is second countable if and only if it is first countable (metrizable) 
and $\sigma$-compact. In particular, a locally compact Hausdorff first countable group is second countable if it is compactly 
generated or, more generally, its quotient modulo any open almost connected subgroup is countable.
 A closed subgroup of a connected Lie group is locally compact, Hausdorff and second countable. 
 Let $G^0$ denote the connected component of the identity $e$ in $G$. It is a closed (normal) characteristic subgroup of $G$. 

\section{Actions of automorphisms of $G$ on {\rm Sub}$_G$}

In this section we discuss basic properties of the Chabauty topology on $\Sub_G$, and discuss certain elementary results about
convergence of sequences in $\Sub_G$ and the action of $\Aut(G)$ on $\Sub_G$. 

Let $G$ be a locally compact (Hausdorff) group. A sub-basis of the Chabauty topology on $\Sub_G$ is given by the sets of the form 
$\mathcal{O}_1(K)=\{A\in \Sub_G\mid A\cap K=\emptyset\}$ and $\mathcal{O}_2(U)=\{A\in \Sub_G\mid A\cap U\neq\emptyset\}$, 
where $K$ is a compact and $U$ is an open subset of $G$. As observed earlier, $\Sub_G$ is compact and Hausdorff, and if $G$ is
second countable, then both $G$ and $\Sub_G$ are metrizable. For more details on the Chabauty topology, see \cite{BHK, PH}. 

Recall that $\Aut(G)$ is the group of all automorphisms of $G$. There is a natural group action of $\Aut(G)$ on $\Sub_G$ 
defined as follows:
$$
\Aut(G)\times\Sub_G\to\Sub_G,\ (T,H)\mapsto T(H);\ T\in\Aut(G),\ H\in\Sub_G.$$ 
 It is easy to see that the map $H\mapsto T(H)$ defines a homeomorphism of $\Sub_G$ for every $T\in\Aut(G)$ 
 (see e.g.\ Proposition 2.1 in \cite{HK}), and the 
 corresponding map $\Aut(G)\to \Homeo(\Sub_G)$ is a group homomorphism. 
 
 All our groups $G$ are locally compact (Hausdorff) and we now assume that they are second countable, 
 which ensures that $\Sub_G$ is metrizable and also that $G$ is metrizable. 

We first state a criterion (cf.\ \cite{BP}, page 161) for the convergence of a sequence in $\Sub_G$ in the following lemma, which 
will be used often.

\begin{lem}\label{Con}
Let $G$ be as above. A sequence $\{H_n\}\subset \Sub_G$ converges to $H\in \Sub_G$ 
if and only if the following hold:
\begin{enumerate}
\item[${\rm(I)}$] For $g\in G$, if there exists a subsequence $\{H_{n_k}\}$ of $\{H_n\}$ with $h_k\in H_{n_k}$, $k\in\N$, such that 
$h_k\to g$ in $G$, then $g\in H$. 
\item[${\rm(II)}$] For every $h\in H$, there exists a sequence $\{h_n\}_{n\in\N}$ such that $h_n\in H_n$, $n\in\N$, and $h_n\to h$.
\end{enumerate}
\end{lem}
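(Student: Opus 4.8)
The plan is to verify both implications directly from the definition of the Chabauty topology in terms of its sub-basis $\mathcal{O}_1(K)$, $\mathcal{O}_2(U)$, exploiting three structural facts already available: $\Sub_G$ is compact and metrizable, $G$ is metrizable, and $G$ is locally compact Hausdorff. Condition (II) is the avatar of the open sets $\mathcal{O}_2(U)$ (points of $H$ must be approached), while condition (I) is the avatar of the open sets $\mathcal{O}_1(K)$ (nothing may accumulate outside $H$), so the two directions amount to translating between pointwise statements and neighbourhood membership.

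For the forward direction, assume $H_n\to H$. To get (II), fix $h\in H$ and a decreasing countable neighbourhood basis $\{U_m\}$ of $h$, using metrizability of $G$. Since $H\in\mathcal{O}_2(U_m)$ for every $m$ and $\mathcal{O}_2(U_m)$ is open, there is $N_m$ with $H_n\cap U_m\neq\emptyset$ for $n\geq N_m$; a diagonal choice of $h_n\in H_n\cap U_{m(n)}$ with $m(n)\to\infty$ then yields $h_n\to h$. To get (I), suppose $h_k\in H_{n_k}$ with $h_k\to g$ and, for contradiction, $g\notin H$. Since $H$ is closed and $G$ is locally compact Hausdorff, there is a compact neighbourhood $K$ of $g$ with $K\cap H=\emptyset$, so $H\in\mathcal{O}_1(K)$; openness forces $H_n\cap K=\emptyset$ for all large $n$. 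As $n_k\to\infty$ and $g\in\mathrm{int}(K)$, for large $k$ we have $h_k\in K$ and hence $H_{n_k}\cap K\neq\emptyset$, a contradiction. Thus $g\in H$.

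For the backward direction, assume (I) and (II). Because $\Sub_G$ is compact and metrizable, it suffices to show that every convergent subsequence of $\{H_n\}$ has $H$ as its limit, since then $H_n\to H$. Let $\{H_{n_k}\}\to H'$. Applying the already-proven forward direction to this subsequence, $H'$ satisfies (I) and (II) relative to $\{H_{n_k}\}$. Then $H\subseteq H'$: given $h\in H$, use (II) for the full sequence to obtain $h_n\in H_n$ with $h_n\to h$, and feed the subsequence $h_{n_k}\to h$ into (I) for $H'$. Conversely $H'\subseteq H$: given $g\in H'$, use (II) for $H'$ to obtain $g_k\in H_{n_k}$ with $g_k\to g$, and feed this into (I) for the full sequence. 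Hence $H'=H$.

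The hard part will be the backward direction, where the genuinely topological content lies: converting the two pointwise conditions into convergence in the Chabauty topology. The clean route is the subsequence-of-a-subsequence argument above, which leans on compactness of $\Sub_G$ and on applying the forward implication to convergent subsequences, rather than directly certifying membership of $H_n$ in every basic neighbourhood $\mathcal{O}_1(K_1)\cap\cdots\cap\mathcal{O}_2(U_\ell)$ of $H$. A direct verification is possible but bookkeeping-heavy; in the forward direction the only real care needed is the diagonal construction establishing (II) and the appeal to local compactness to produce the separating compact set $K$ in (I).
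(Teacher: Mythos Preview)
Your proof is correct. The paper, however, does not prove this lemma at all: it merely states the criterion and cites \cite{BP}, page 161, as the source. So there is no ``paper's own proof'' to compare against; you have supplied what the authors chose to outsource.

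A minor remark on presentation: in your argument for (II) in the forward direction, the diagonal choice $h_n\in H_n\cap U_{m(n)}$ is only defined for $n\geq N_1$; for smaller $n$ you implicitly need to pick some $h_n\in H_n$, which is fine since every closed subgroup contains $e$. This is harmless but worth making explicit. Otherwise the argument---using the sub-basic sets $\mathcal{O}_2(U)$ to extract approximants, $\mathcal{O}_1(K)$ with local compactness to block extraneous limits, and the compactness-plus-metrizability of $\Sub_G$ to reduce the converse to a subsequence check---is exactly the standard route and is carried out cleanly.
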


The following lemma lists some elementary facts about the convergence in $\Sub_G$ and also the interplay of the action of 
$\Aut(G)$ and the convergence.

\begin{lem}\label{H_n}
Let $G$ be as above. The following statements hold:
\begin{enumerate}
\item[$(1)$] Let $H_n, L_n\in\Sub_G$ be such that $H_n\subset L_n$, $n\in\N$, $H_n\to H$ and $L_n\to L$ in $\Sub_G$. Then $H\subset L$.
\item[$(2)$] Let $H, L_n\in\Sub_G$ be such that $H\subset L_n$ for all $n\in\N$, and $L_n\to L$ in $\Sub_G$. Then $H\subset L$.
\item[$(3)$] Let $H_0,L_0\in\Sub_G$ be such that $H_0\subset L_0$ and $T_n(H_0)\to H$ and $T_n(L_0)\to L$, for some 
$\{T_n\}\subset \Aut(G)$. Then $H\subset L$.
\item[$(4)$] Let $T\in\Aut(G)$ and let $H, L\in\Sub_G$ be such that $T(H)=H$ and $H\subset L$. If $L'$ is any limit point of 
$\{T^n(L)\}_{n\in\Z}$, then $H\subset L'$.
\end{enumerate}
\end{lem}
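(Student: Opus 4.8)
The plan is to prove part $(1)$ directly from the convergence criterion of \lemref{Con}, and then to obtain parts $(2)$--$(4)$ as consequences of $(1)$ (and of each other), using two structural facts: an automorphism preserves inclusions of subgroups, and $T$-invariance of $H$ propagates to every power $T^n$.

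For part $(1)$, I would fix an arbitrary $h\in H$ and apply condition ${\rm(II)}$ of \lemref{Con} to $H_n\to H$ to produce a sequence $h_n\in H_n$ with $h_n\to h$. Since $H_n\subset L_n$, each $h_n$ lies in $L_n$; feeding the convergent sequence $\{h_n\}$, with $h_n\in L_n$ and $h_n\to h$, into condition ${\rm(I)}$ of \lemref{Con} for $L_n\to L$ then yields $h\in L$. As $h\in H$ was arbitrary, this gives $H\subset L$. This is the only step requiring genuine use of the criterion, and it is the main point of the lemma: the subtlety is the bookkeeping between the ``lower'' condition ${\rm(II)}$, which manufactures approximating elements inside the $H_n$, and the ``upper'' condition ${\rm(I)}$, which absorbs their limits into $L$.

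Parts $(2)$ and $(3)$ reduce to $(1)$. For $(2)$, take the constant sequence $H_n=H$, which trivially converges to $H$ and satisfies $H_n\subset L_n$; applying $(1)$ gives $H\subset L$. (Equivalently, for $h\in H$ the constant sequence $h\in L_n$ converges to $h$, so $h\in L$ by condition ${\rm(I)}$.) For $(3)$, since each $T_n\in\Aut(G)$ preserves inclusions we have $T_n(H_0)\subset T_n(L_0)$; applying $(1)$ to the sequences $T_n(H_0)\to H$ and $T_n(L_0)\to L$ yields $H\subset L$.

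Finally, part $(4)$ reduces to $(2)$. From $T(H)=H$ one gets $T^n(H)=H$ for every $n\in\Z$, and applying the automorphism $T^n$ to the inclusion $H\subset L$ gives $H=T^n(H)\subset T^n(L)$ for all $n$. If $L'$ is a limit point of $\{T^n(L)\}_{n\in\Z}$, then by compactness and metrizability of $\Sub_G$ there is a sequence $n_k$ with $T^{n_k}(L)\to L'$; applying $(2)$ with the fixed subgroup $H\subset T^{n_k}(L)$ and $T^{n_k}(L)\to L'$ gives $H\subset L'$. Thus everything after part $(1)$ is a formal consequence of the homomorphism property of automorphisms together with the $T$-invariance of $H$.
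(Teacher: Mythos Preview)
Your proof is correct and matches the paper's approach essentially line for line: the paper proves $(1)$ exactly as you do via conditions ${\rm(II)}$ and ${\rm(I)}$ of \lemref{Con}, then deduces $(2)$ and $(3)$ from $(1)$ by taking $H_n=H$ and $H_n=T_n(H_0)$, $L_n=T_n(L_0)$ respectively, and finally notes that $(2)\Rightarrow(4)$ is obvious. Your write-up of $(4)$ simply spells out this last implication in more detail.
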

\begin{proof}
(1) Let $h\in H$. As $H_n\to H$, Lemma \ref{Con}\,(II) implies that there exists a sequence $\{h_n\}$ such that $h_n\in H_n$ for all $n$, 
and $h_n\to h$. Since $H_n\subset L_n$, we have that $h_n\in L_n$, for all $n$. As $L_n\to L$, Lemma \ref{Con}\,(I) implies that 
$h\in L$. Hence $H\subset L$.\\
$(1)\Rightarrow(2)$ as we can put $H_n=H$ in (1), for all $n$.\\
$(1)\Rightarrow (3)$ as we can take $H_n=T_n(H_0)$ and $L_n=T_n(L_0)$ in (1), for all $n$.\\
$(2)\Rightarrow (4)$ is obvious. 
\end{proof}

The following elementary result about the relation between the convergence of a sequence of subgroups and that of the corresponding 
sequence in the quotient group will be useful. 

\begin{lem}\label{Seq} For $G$ as above and a closed normal subgroup $H$ of $G$, let $\pi:G\to G/H$ be the cannonical projection. 
Suppose $\{L_n\}\subset\Sub_G$ is such that $H\subset\cap_{n\in\N}L_n$. Then the following hold:
\begin{enumerate}
\item[$(1)$] If $L_n\to L$ in $\Sub_G$, then $\pi(L_n)\to\pi(L)$.
\item[$(2)$] If $\pi(L_n)\to L'$, then $L_n\to \pi^{-1}(L')$.
\end{enumerate}
\end{lem}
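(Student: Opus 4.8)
The plan is to verify, in each of the two statements, the convergence criteria (I) and (II) of Lemma~\ref{Con} directly, exploiting that $\pi:G\to G/H$ is a continuous, open, surjective homomorphism. First I would record the standing structural facts. Since $H\subset L_n$ for every $n$, each $L_n$ is a union of cosets of $H$, so $\pi(L_n)=L_n/H$ is a closed subgroup of $G/H$; thus $\pi(L_n)\in\Sub_{G/H}$. In the setting of~(1), Lemma~\ref{H_n}(2) gives $H\subset L$, so $\pi(L)=L/H$ is likewise a closed subgroup and the claimed limit $\pi(L)$ is a legitimate element of $\Sub_{G/H}$. In the setting of~(2), $\pi^{-1}(L')$ is a closed subgroup of $G$ containing $H=\pi^{-1}(\{\bar e\})$, so $\pi^{-1}(L')\in\Sub_G$ is a legitimate limit. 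Both $G$ and $G/H$ are locally compact and metrizable, so Lemma~\ref{Con} applies in each.

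The technical heart is a \emph{sequence-lifting} fact, which I would isolate first: if $\bar g_n\to\bar g$ in $G/H$ and $g\in\pi^{-1}(\bar g)$, then there exist $g_n\in\pi^{-1}(\bar g_n)$ with $g_n\to g$. This follows from openness of $\pi$ together with metrizability: fixing a countable neighbourhood base $\{U_j\}$ at $g$, each $\pi(U_j)$ is a neighbourhood of $\bar g$, so for $n$ large enough $\bar g_n\in\pi(U_j)$ and one may pick a representative of $\bar g_n$ inside $U_j$; a diagonal argument produces representatives $g_n\to g$. The crucial point linking this to the subgroups is that whenever $\bar g_n\in\pi(L_n)$, the \emph{entire} coset $\pi^{-1}(\bar g_n)$ lies in $L_n$ (because $H\subset L_n$), so the lifts $g_n$ automatically satisfy $g_n\in L_n$.

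With this in hand, statement~(1) is routine. For criterion (I) applied to $\pi(L_n)\to\pi(L)$: given $\bar g\in G/H$ and a subsequence with $\bar h_k\in\pi(L_{n_k})$, $\bar h_k\to\bar g$, pick any $g\in\pi^{-1}(\bar g)$ and lift to $h_k\in\pi^{-1}(\bar h_k)\subset L_{n_k}$ with $h_k\to g$; criterion (I) for $L_n\to L$ yields $g\in L$, hence $\bar g\in\pi(L)$. For criterion (II): given $\bar h=\pi(h)\in\pi(L)$, criterion (II) for $L_n\to L$ gives $h_n\in L_n$ with $h_n\to h$, and continuity of $\pi$ gives $\pi(h_n)\in\pi(L_n)$ with $\pi(h_n)\to\bar h$. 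Statement~(2) is the symmetric argument: criterion (I) for $L_n\to\pi^{-1}(L')$ follows by applying the continuous map $\pi$ to a convergent sequence $h_k\in L_{n_k}$, $h_k\to g$, and invoking criterion (I) for $\pi(L_n)\to L'$ to conclude $\pi(g)\in L'$; criterion (II) uses the lifting fact to raise a sequence $\bar g_n\in\pi(L_n)$, $\bar g_n\to\pi(g)$, to $g_n\in L_n$, $g_n\to g$. I expect the only genuine obstacle to be the lifting step, i.e.\ arranging that representatives can be chosen \emph{convergently and inside $L_n$}; both features hinge on openness of $\pi$ and on the saturation hypothesis $H\subset L_n$, and once these are in place the rest is a mechanical application of Lemma~\ref{Con}.
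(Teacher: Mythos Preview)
Your proof is correct. Both you and the paper rely on the same core ingredient---the lifting fact that a convergent sequence in $G/H$ can be raised to a convergent sequence of preimages, which then automatically lands in $L_n$ by saturation---but the overall architecture differs. The paper argues by compactness: for~(1) it takes an arbitrary subsequential limit $L'$ of $\{\pi(L_n)\}$ in the compact space $\Sub_{G/H}$ and shows $L'=\pi(L)$ by double inclusion; for~(2) it takes an arbitrary subsequential limit $L$ of $\{L_n\}$ in $\Sub_G$, applies the already-proved~(1) to get $\pi(L)=L'$, and concludes $L=\pi^{-1}(L')$ from $H\subset L$. You instead verify criteria~(I) and~(II) of Lemma~\ref{Con} head-on for each direction, never invoking compactness of the Chabauty spaces. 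The paper's route has the pleasant feature that~(2) falls out of~(1) almost for free, whereas you treat~(2) symmetrically and independently; on the other hand, your argument is more self-contained and makes the role of openness of $\pi$ and the saturation hypothesis $H\subset L_n$ completely explicit, where the paper simply asserts the existence of the correcting elements $h_k\in H$.
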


\begin{proof}
(1) Suppose $L_n\to L$. Observe that $\{\pi(L_n)\}$ is relatively compact as $\Sub_{G/H}$ is compact. Let $L'$ be 
a limit point of $\{\pi(L_n)\}$. As $\Sub_{G/H}$ is metrizable, there exists $\{n_k\}\subset\N$ such that $\pi(L_{n_k})\to L'$. 
First we show that $\pi(L)\subset L'$. By Lemma \ref{Con}\,(II), for any $x\in L$, there exists $\{x_n\}$ such that 
$x_n\in L_n$, $n\in\N$, and $x_n\to x$ in $G$. This implies that $\pi(x_n)\to\pi(x)$ (as $\pi$ is continuous). 
Therefore, $\pi(x)\in L'$, and hence $\pi(L)\subset L'$.

Now we show that $L'\subset\pi(L)$. Suppose $x'\in L'$. Again by Lemma \ref{Con}\,(II), there exists  
$\{x_k'\}$ such that $x'_k\in\pi(L_{n_k})$, $k\in\N$, and $x'_k\to x'$ in $G/H$. For $k\in\N$, let $x_k\in L_{n_k}$ be such that 
$\pi(x_k)=x'_k$. There exists a sequence $\{h_k\}\subset H$ such that $x_kh_k\to x$, for some $x\in G$. Now $x_kh_k\in L_{n_k}$ 
as $H\subset L_{n_k}$, for all $k$, and hence $x\in L$ (by Lemma \ref{Con}\,(I)). Moreover, $x'=\pi(x)\in \pi(L)$, and hence $\pi(L)=L'$. 
Since this holds for all limit points of $\{\pi(L_n)\}$, we have that $\pi(L_n)\to\pi(L)$.

(2) Suppose  $\pi(L_n)\to L'$. Here, $\{L_n\}$ is relatively compact as $\Sub_G$ is compact. Let $L\in\Sub_G$ be a limit point of 
$\{L_n\}$. We show that $L=\pi^{-1}(L')$. As $\Sub_G$ is metrizable, there exists a subsequence $\{L_{n_k}\}$ of 
$\{L_n\}$ such that $L_{n_k}\to L$. From (1), we get that $\pi(L)=L'$. As $H\subset L_{n_k}$, for all $k$, $H\subset L$ 
(from Lemma \ref{H_n}\,(2)), which in turn implies that $L=\pi^{-1}(L')$. Since this 
holds for all the limit points $L$ of $\{L_n\}$, it follows that $L_n\to\pi^{-1}(L')$ \end{proof}

Recall that for a locally compact (Hausdorff) group $G$, if $\Aut(G)$ is endowed with the compact-open topology, the map 
$\Aut(G)\times G\to G$, $(T,x)\mapsto T(x)$, $T\in \Aut(G)$, $x\in G$, is continuous. If $\Aut(G)$ is endowed with the modified 
compact-open topology (which is finer than the compact-open topology), then $\Aut(G)$ is a topological group \cite[9.17]{St06}, 
and we show that the action of the group $\Aut(G)$ on $\Sub_G$ is continuous. Note that for a connected Lie group $G$ with 
the Lie algebra $\G$, $\Aut(G)$ is a Lie group as a closed subgroup of $\GL(\G)$ and, the topology on $\Aut(G)$ inherited from 
$\GL(\G)$ coincides with the compact-open topology, as well as with the modified compact-open topology \cite{Ar46, Ho52}. 
 
\begin{lem} \label{mco} Let $G$ be a locally compact Hausdorff group and let $\Aut(G)$ be endowed with the modified compact-open 
topology. Then the map $\Aut(G)\times\Sub_G\to\Sub_G$, $(T,H)\mapsto T(H)$, $T\in\Aut(G)$, $H\in \Sub(H)$, is continuous. 
\end{lem}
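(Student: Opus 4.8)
The plan is to prove continuity of the action map $\Phi:\Aut(G)\times\Sub_G\to\Sub_G$, $\Phi(T,H)=T(H)$, by working sequentially: since $\Sub_G$ is metrizable (as $G$ is metrizable, $\Aut(G)$ with the modified compact-open topology is also metrizable), continuity is equivalent to sequential continuity. So I would take sequences $T_n\to T$ in $\Aut(G)$ and $H_n\to H$ in $\Sub_G$, and show $T_n(H_n)\to T(H)$ by verifying the two convergence conditions (I) and (II) of Lemma \ref{Con}. The key analytic input is that the evaluation map $\Aut(G)\times G\to G$, $(S,x)\mapsto S(x)$, is continuous for the modified compact-open topology (indeed already for the compact-open topology), and crucially, since the modified compact-open topology makes $\Aut(G)$ a topological group, the map $(S,x)\mapsto S^{-1}(x)$ is \emph{jointly} continuous as well. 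This joint continuity of both $T_n$ and $T_n^{-1}$ on compact sets is what the proof will lean on.

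For condition (II): given $h\in T(H)$, write $h=T(h')$ with $h'\in H$. By condition (II) applied to $H_n\to H$, pick $h'_n\in H_n$ with $h'_n\to h'$. Then $T_n(h'_n)\in T_n(H_n)$, and I would argue $T_n(h'_n)\to T(h')=h$ using joint continuity of the evaluation: since $h'_n\to h'$ in a set whose closure is compact and $T_n\to T$, we get $T_n(h'_n)\to T(h')$. This furnishes the required approximating sequence, giving (II). For condition (I): suppose along a subsequence $g_k\in T_{n_k}(H_{n_k})$ with $g_k\to g$ in $G$. Write $g_k=T_{n_k}(x_k)$, so $x_k=T_{n_k}^{-1}(g_k)\in H_{n_k}$. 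The goal is $g\in T(H)$, equivalently $T^{-1}(g)\in H$. Applying joint continuity of $(S,x)\mapsto S^{-1}(x)$ to $T_{n_k}\to T$ and $g_k\to g$ yields $x_k=T_{n_k}^{-1}(g_k)\to T^{-1}(g)$. Since $x_k\in H_{n_k}$ and $H_{n_k}\to H$, condition (I) for the convergence $H_n\to H$ forces $T^{-1}(g)\in H$, hence $g\in T(H)$, which is (I) for $T_n(H_n)\to T(H)$.

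The main obstacle, and the reason the \emph{modified} compact-open topology is needed rather than the ordinary compact-open one, is the handling of condition (I): there we must invert the automorphisms, and we need $T_n^{-1}\to T^{-1}$ together with joint continuity of evaluation so that $T_{n_k}^{-1}(g_k)$ converges. Under the plain compact-open topology, inversion $S\mapsto S^{-1}$ is not in general continuous, so $T_n^{-1}$ need not converge to $T^{-1}$ and the argument for (I) breaks down. The modified compact-open topology is precisely the standard device (cf.\ \cite{St06}, 9.17) that repairs this by making $\Aut(G)$ a topological group, so that inversion is continuous and the two-variable evaluation of the inverses is well-behaved. Beyond this point the verification is routine: one must be mildly careful that the approximating and subsequential points stay in a fixed compact neighbourhood so that the compact-open (and modified compact-open) continuity of evaluation applies, but this is automatic since convergent sequences in $G$ are relatively compact.
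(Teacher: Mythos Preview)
Your argument is sound in outline but takes a genuinely different route from the paper, and it carries a generality cost you gloss over. The paper proves the lemma for an arbitrary locally compact Hausdorff $G$ by a direct net-free argument at the level of sub-basic open sets: given $(T,H)$ with $T(H)\in O_1(K)$ (resp.\ $O_2(U)$), it manufactures an explicit open neighbourhood of $(T,H)$ in $\Aut(G)\times\Sub_G$, of the form $\lfloor K,W\rfloor^{-1}\times O_1(\ol W)$ (resp.\ $\lfloor \ol V,U\rfloor\times O_2(V)$), that maps into $O_1(K)$ (resp.\ $O_2(U)$). This uses only local compactness and the definition of the modified compact-open topology, and the inverse-type sub-basic sets $\lfloor K,W\rfloor^{-1}$ are exactly where the ``modified'' part of the topology enters---playing the same role that your appeal to continuity of $S\mapsto S^{-1}$ plays.

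Your sequential verification of conditions (I) and (II) via joint continuity of $(S,x)\mapsto S(x)$ and $(S,x)\mapsto S^{-1}(x)$ is correct once you know that sequential continuity suffices, but that step is not free. The lemma is stated for locally compact Hausdorff $G$, and even under the paper's standing metrizability assumption on $G$, you would still need $\Aut(G)\times\Sub_G$ to be first countable. Metrizability of $G$ alone does not guarantee that $\Aut(G)$ with the (modified) compact-open topology is first countable: for instance, an uncountable discrete $G$ is metrizable, yet the compact-open topology on $\Aut(G)$ is pointwise convergence, which is not first countable. What you actually need is that $G$ is second countable (equivalently $\sigma$-compact and metrizable), and you should say so. With that extra hypothesis your proof goes through cleanly and is arguably more conceptual; without it, the paper's sub-basis argument is the right tool and covers the stated generality.
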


\begin{proof} For a compact set $C$ and an open set $U$ in $G$, let $\lfloor C,U\rfloor=\{\phi\in\Aut(G)\mid \phi(C)\subset U\}$ 
and $\lfloor C,U\rfloor^{-1}=\{\phi\in\Aut(G)\mid \phi^{-1}\in \lfloor C,U\rfloor\}$. They are open in $\Aut(G)$ and the collection of all 
such sets form a sub-basis for the modified compact-open topology on $\Aut(G)$ \cite{St06}. 

Let $T\in\Aut(G)$ and $H\in \Sub_G$. Then $T(H)$ belongs to either $O_1(K)$ or $O_2(U)$ for some compact set $K$ or 
an open set $U$ in $G$, where $O_1(K)$ and $O_2(U)$ are open sets in the sub-basis of the topology on $\Sub_G$. 
Suppose $T(H)\in O_1(K)=\{L\in\Sub_G\mid L\cap K=\emptyset\}$. Then $H\cap T^{-1}(K)=\emptyset$. As $H$ is closed, $T^{-1}(K)$ is 
compact and since $G$ is locally compact, there exists an open relatively compact set $W$ containing $T^{-1}(K)$ such that 
$H\cap \ol{W}=\emptyset$. Then $\lfloor K,W\rfloor^{-1}\times O_1(\ol{W})$ is open in $\Aut(G)\times \Sub_G$, it contains $(T,H)$ and if 
$\phi\in \lfloor K,W\rfloor^{-1}$ and $L\in O_1(\ol{W})$, then $\phi(L)\in O_1(K)$.

Now suppose $T(H)\in O_2(U)=\{L\in\Sub_G\mid L\cap U\ne\emptyset\}$. Since $T(H)\cap U\ne\emptyset$, there exists $x\in H$ 
such that $T(x)\in U$. Since $G$ is locally compact and $U$ is open, there exists an open relatively compact set $W_1$ such that 
$T(x)\in W_1\subset \ol{W}_1\subset U$. Let $V=T^{-1}(W_1)$. Then $V$ is relatively compact. As $x\in V$ and $T(\ol{V})\subset U$, 
we get that $(T,H)\in \lfloor\ol{V},U\rfloor\times O_2(V)$ which is open in $\Aut(G)\times\Sub_G$. Moreover, if 
$\phi\in \lfloor\ol{V},U\rfloor$ and $L\in O_2(V)$, then $\phi(L)\cap U\ne\emptyset$. Therefore, the map 
$\Aut(G)\times\Sub_G\to\Sub_G$ as above is continuous. 
\end{proof}

\section{Distality of automorphisms on $G$ and {\rm Sub}$_G$}

In this section, for $T\in\Aut(G)$, we show that the distality of the $T$-action on $\Sub_G$ implies the distality of the corresponding 
action on $\Sub_{G/H}$ for any closed normal $T$-invariant subgroup $H$ of $G$. We also  compare the distality of the $T$-action on 
$\Sub_G$ and that of $T$ on $G$. Theorem 1.3 in \cite{Sh12} shows that an automorphism $T$ of $G$ is distal if and only if for any 
closed normal $T$-invariant subgroup $H$, $T|_H$ is distal and $T$ acts distally on $G/H$. Lemma \ref{Chab} and Example \ref{counter} 
illustrate that only a partial analogue of this theorem holds for the action of automorphisms on $\Sub_G$. We prove some results 
about the distality of automorphisms on a connected Lie group which belong to $\nc$. We also prove a result on the structure 
of a nilpotent group admitting a unipotent automorphism which is a generalisation of Kolchin's Theorem for vector spaces; this will be 
useful to prove some of the main results. We end the section with a useful lemma about a criterion for the behaviour of an automorphism 
which implies that it does not belong to $\nc$. 

\begin{lem} \label{Chab}
Let $G$ be a locally compact Hausdorff group, $T\in\Aut(G)$ and let $H$ be a closed normal $T$-invariant subgroup of $G$. 
Let $\T\in\Aut(G/H)$ be the corresponding map defined as $\T(gH)=T(g)H$, for all $g\in G$. If $T$ acts distally on $\Sub_G$, 
then $T$ acts distally on $\Sub_H$ and $\T$ acts distally on $\Sub_{G/H}$. 
\end{lem}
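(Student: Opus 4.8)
The plan is to treat the two conclusions separately: the statement about $\Sub_H$ follows by restriction to a closed invariant subspace, while the statement about $\T$ on $\Sub_{G/H}$ is obtained by lifting through the quotient map and invoking \lemref{Seq}. Throughout I would use that, since $\Sub_G$ is compact and metrizable, failure of distality is always witnessed by a \emph{sequence}: $T$ fails to act distally on $\Sub_G$ iff there are distinct $A,B\in\Sub_G$ and $\{n_k\}\subset\Z$ with $T^{n_k}(A)\to C$ and $T^{n_k}(B)\to C$ for a common $C$.

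For the claim about $\Sub_H$, I would first observe that since $H$ is closed in $G$, the collection $\Sub_H$ of closed subgroups of $H$ sits inside $\Sub_G$ as a \emph{closed} subset: if $A_n\in\Sub_H$ and $A_n\to A$ in $\Sub_G$, then condition (II) of \lemref{Con} gives, for each $a\in A$, elements $a_n\in A_n\subset H$ with $a_n\to a$, so $a\in H$ as $H$ is closed, whence $A\in\Sub_H$. Moreover the Chabauty topology of $\Sub_H$ coincides with the subspace topology inherited from $\Sub_G$ (both are governed by conditions (I) and (II), with (I) tested in $G$ automatically forcing limits into the closed set $H$). Because $H$ is $T$-invariant, $\Sub_H$ is invariant under the $T$-action, and distality passes to invariant subspaces: for distinct $A,B\in\Sub_H\subset\Sub_G$, the closure of $\{(T^n(A),T^n(B))\mid n\in\Z\}$ avoids the diagonal of $\Sub_G\times\Sub_G$ by hypothesis, hence avoids the diagonal of $\Sub_H\times\Sub_H$. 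Thus $T$ acts distally on $\Sub_H$.

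For the claim about $\T$ on $\Sub_{G/H}$, I would argue by contradiction, writing $\pi:G\to G/H$ for the projection, so that $\pi\circ T=\T\circ\pi$. Suppose $\T$ is not distal: there are distinct $A',B'\in\Sub_{G/H}$ and $\{n_k\}\subset\Z$ with $\T^{n_k}(A')\to C'$ and $\T^{n_k}(B')\to C'$ for a common $C'$. Set $A=\pi^{-1}(A')$ and $B=\pi^{-1}(B')$, closed subgroups of $G$ containing $H$, with $A\ne B$ since $A'\ne B'$. Here the $T$-invariance of $H$ is the crucial ingredient: it gives $H=T^{n_k}(H)\subset T^{n_k}(A)$, so each $T^{n_k}(A)=\pi^{-1}(\pi(T^{n_k}(A)))=\pi^{-1}(\T^{n_k}(A'))$ contains $H$, and likewise for $B$. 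Now the hypotheses of \lemref{Seq}(2) are met for $\{T^{n_k}(A)\}$, since each term contains $H$ and $\pi(T^{n_k}(A))=\T^{n_k}(A')\to C'$; hence $T^{n_k}(A)\to\pi^{-1}(C')$ in $\Sub_G$, and symmetrically $T^{n_k}(B)\to\pi^{-1}(C')$. Thus the orbit of the pair $(A,B)$ accumulates on the diagonal, contradicting distality of $T$ on $\Sub_G$; therefore $\T$ acts distally on $\Sub_{G/H}$.

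The routine parts are the topology identification in the first claim and the bookkeeping identity $T^{n_k}(A)=\pi^{-1}(\T^{n_k}(A'))$. The substantive point, and the only place where care is needed, is the alignment of hypotheses in the second claim: one must ensure that every $T^{n_k}(A)$ contains $H$ before applying \lemref{Seq}(2), and this is exactly guaranteed by the $T$-invariance of $H$. It is precisely this observation that lets the lemma convert convergence downstairs in $\Sub_{G/H}$ into convergence upstairs in $\Sub_G$, transporting a diagonal coincidence for $\T$ back to one for $T$.
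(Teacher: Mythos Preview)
Your proof is correct and follows essentially the same route as the paper: for $\Sub_H$ you use restriction to a closed $T$-invariant subspace (the paper dispenses with this in a single clause), and for $\Sub_{G/H}$ you lift via $\pi^{-1}$ and invoke \lemref{Seq}(2), exactly as the paper does. The only cosmetic difference is that you phrase the second part as a contradiction while the paper argues directly that $\pi^{-1}(H_1)=\pi^{-1}(H_2)$; the content is identical.
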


\begin{proof} 
Suppose $T$ acts distally on $\Sub_G$. As $\Sub_H$ is closed and $T$-invariant, it is easy to see that 
$T$ acts distally on $\Sub_H$. Now we show that $\T$ acts distally on 
$\Sub_{G/H}$. Suppose $G$ is second countable. 
For $i=1,2$, suppose $H_i\in\Sub_{G/H}$ and a sequence $\{n_k\}\subset\Z$ are such that $\T^{n_k}(H_i)\to L$ 
in $\Sub_{G/H}$. Then $T(H)=H\subset \pi^{-1}(H_i)$. By Lemma \ref{Seq}\,(2), 
$T^{n_k}(\pi^{-1}(H_i))\to\pi^{-1}(L)$, for $i=1,2$. As $T$ is distal on $\Sub_G$, $\pi^{-1}(H_1)=\pi^{-1}(H_2)$, and 
hence $H_1=H_2$. Thus, $\T$ acts distally on $\Sub_{G/H}$ in this case.

Now suppose $G$ is not second countable.
Let $S_H:=\{L\in\Sub_G\mid H\subset L\}$. As $T(H)=H$, it follows that $S_H$ is a closed $T$-invariant subspace of $\Sub_G$. Let 
$\bar\pi:S_H\to\Sub_{G/H}$, $\bar\pi(L)=\pi(L)=L/H$, $L\in S_H$. Then $\bar\pi$ is a homeomorphism such that 
$\T\circ\bar\pi=\bar\pi\circ T$. As $T$ acts distally on $\Sub_G$, it acts distally on $S_H$. Now using the homeomorphism $\bar\pi$, 
it is easy to see that $\T$ acts distally on $\Sub_{G/H}$. 
\end{proof}

The converse of the lemma does not hold as illustrated by the following.

\begin{example} \label{counter}
Let $T= \begin{bmatrix} 1 & 1\\ 0 & 1  \end{bmatrix}$ in $\GL(2,\R)$. It is clear that the $T$-action on $\R^2$ is distal. Also, for 
$H=\R\times\{0\}$, $T$ acts trivially on $H$ and on $\R^2/H$, hence it acts trivially on both $\Sub_H$ and $\Sub_{\R^2/H}$, but 
the $T$-action on $\Sub_{\R^2}$ is not distal, as $T(H)=H$ and for $H_1=\{0\}\times\R$, $T^n(H_1)\rightarrow H$ in $\Sub_{\R^2}$ 
as $n\to\pm\infty$.
\end{example}

Recall that $T\in\Aut(G)$ belongs to $\nc$ if for any discrete (closed) nontrivial cyclic subgroup $A$ of $G$, $T^{n_k}(A)\not\to \{e\}$ 
in $\Sub_G$ for any sequence $\{n_k\}\subset\Z$ such that ${n_k}\to\infty$ or ${n_k}\to -\infty$. 
Note that for $T$ as in Example \ref{counter} and $G=\R^2$, $T\notin\nc$. This is because one can choose a discrete cyclic 
subgroup $A=\{0\}\times\Z$, for which $T^n(A)=\{(nz,z)\mid z\in \Z\}$, $n\in\Z$, and $T^n(A)\to\{(0,0)\}$ as $n\to\pm\infty$.

A topological group is said to be monothetic if it has a dense cyclic subgroup. Monothetic groups are abelian. In a locally compact 
group, a closed monothetic subgroup is either compact or a  discrete infinite cyclic group. The following lemma will be useful. 

\begin{lem} \label{abelian}
Let $G$ be a locally compact Hausdorff second countable group and let $T\in \Aut(G)$. Let $Z$ be a closed central 
$T$-invariant subgroup of $G$ and let $\T$ be the corresponding automorphism on $G/Z$. Suppose $T$ acts distally on $\Sub^a_G$. 
Then $\T$ satisfies the following: For any two closed monothetic subgroups $A_1$ and $A_2$ in $G/Z$, if for a sequence 
$\{n_k\}\subset\Z$, $\T^{n_k}(A_i)\to B$, $i=1,2$, for some closed subgroup $B$ in $G/Z$, then $A_1=A_2$. In particular, 
$\T\in \nc$ in $\Aut(G/Z)$.
\end{lem}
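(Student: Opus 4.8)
The plan is to lift the two monothetic subgroups of $G/Z$ to closed \emph{abelian} subgroups of $G$ containing $Z$, push the hypothesis up through Lemma~\ref{Seq}, and then invoke the distality of $T$ on $\Sub^a_G$. First I would fix generators: since $A_i$ is monothetic, write $A_i=\ol{\langle a_iZ\rangle}$ for some $a_i\in G$, and set $\tilde A_i=\ol{\langle a_i,Z\rangle}$, the closed subgroup of $G$ generated by $a_i$ together with $Z$. The crucial point is that $\tilde A_i$ is abelian: because $Z$ is central (hence abelian) and $a_i$ commutes with every element of $Z$, the subgroup $\langle a_i,Z\rangle$ consists of elements $a_i^m z$ with $m\in\Z,\ z\in Z$, and any two of these commute; passing to the closure preserves commutativity, so $\tilde A_i\in\Sub^a_G$. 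Moreover $Z\subset\tilde A_i$, and $\pi(\tilde A_i)=A_i$ (in fact $\tilde A_i=\pi^{-1}(A_i)$), since $\pi(\tilde A_i)$ is a closed subgroup containing the dense subset $\langle a_iZ\rangle$ of $A_i$ while being contained in $\ol{\pi(\langle a_i,Z\rangle)}=A_i$.

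Next I would transfer the convergence. As $Z$ is $T$-invariant, $Z=T^{n_k}(Z)\subset T^{n_k}(\tilde A_i)$ for every $k$, and by the definition of $\T$ we have $\pi\bigl(T^{n_k}(\tilde A_i)\bigr)=\T^{n_k}\bigl(\pi(\tilde A_i)\bigr)=\T^{n_k}(A_i)\to B$. Applying Lemma~\ref{Seq}\,(2) to the sequence $\{T^{n_k}(\tilde A_i)\}_k$, each term of which contains $Z$, yields $T^{n_k}(\tilde A_i)\to\pi^{-1}(B)$ in $\Sub_G$, for both $i=1,2$. The key observation is that the two limits \emph{coincide}: $T^{n_k}(\tilde A_1)$ and $T^{n_k}(\tilde A_2)$ both converge to $\pi^{-1}(B)$. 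Since $\Sub^a_G$ is closed in the compact space $\Sub_G$ and $\tilde A_1,\tilde A_2\in\Sub^a_G$, the common limit lies in $\Sub^a_G$, so the distality of the $T$-action on $\Sub^a_G$ (which forbids the orbit closure of a pair of distinct points from meeting the diagonal) forces $\tilde A_1=\tilde A_2$. Applying $\pi$ gives $A_1=\pi(\tilde A_1)=\pi(\tilde A_2)=A_2$, as required.

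The ``in particular'' clause then follows by specialisation. Given a nontrivial discrete (closed) cyclic subgroup $A\subset G/Z$ with $\T^{n_k}(A)\to\{e\}$ for some $n_k\to\infty$ or $n_k\to-\infty$, I would take $A_1=A$ and $A_2=\{e\}$ (both monothetic) with $B=\{e\}$; since $\T^{n_k}(\{e\})=\{e\}\to\{e\}$ trivially, the main assertion forces $A=\{e\}$, contradicting nontriviality. Hence $\T\in\nc$ in $\Aut(G/Z)$.

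I expect the only genuinely delicate step to be the first one: verifying that the lift $\tilde A_i$ is abelian and projects exactly onto $A_i$. This is precisely where the monothetic hypothesis is indispensable, for a general closed abelian $A_i$ would have a central extension $\pi^{-1}(A_i)$ that need not be abelian (a Heisenberg-type obstruction), and hence would not lie in $\Sub^a_G$, breaking the appeal to distality. Everything after that is a routine application of Lemmas~\ref{Con}, \ref{H_n}\,(2) and \ref{Seq} together with the characterisation of distality as the absence of a common sequential limit for two distinct points.
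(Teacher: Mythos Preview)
Your proof is correct and follows essentially the same route as the paper's: both lift $A_i$ to the abelian subgroup $\pi^{-1}(A_i)\in\Sub^a_G$, invoke Lemma~\ref{Seq}\,(2) to get $T^{n_k}(\pi^{-1}(A_i))\to\pi^{-1}(B)$, and then apply distality on $\Sub^a_G$. Your version simply spells out what the paper compresses into ``it is easy to see that each $\pi^{-1}(A_i)$ is abelian'' by exhibiting $\pi^{-1}(A_i)$ as the closure $\ol{\langle a_i,Z\rangle}$ of an abelian group, which is exactly where the monothetic hypothesis is used.
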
 

\begin{proof} Let $\pi:G\to G/Z$ be the natural projection. Since each $A_i$ is monothetic and $Z$ is a central subgroup of $G$, 
it is easy to see that each $\pi^{-1}(A_i)$ is abelian. By Lemma \ref{Seq}\,(2), $T^{n_k}(\pi^{-1}(A_i))\to\pi^{-1}(B)$, $i=1,2$. 
Therefore, $\pi^{-1}(B)\in \Sub^a_G$. As $T$ acts distally on $\Sub_G$, we get that $\pi^{-1}(A_1)=\pi^{-1}(A_2)$ and hence that  
$A_1=A_2$. The second assertion follows from the first. 
\end{proof}

The following useful lemma can be proved easily using Lemma \ref{mco}. The lemma will apply in particular to the case of a connected 
Lie group $G$ (with the Lie algebra $\G$) as $\Aut(G)$ is a Lie group whose topology (inherited from $\GL(\G)$) coincides with the 
compact-open topology, as well as with the modified compact-open topology. 

\begin{lem} \label{cpt-star} Let $G$ be a locally compact Hausdorff group and let $T\in\Aut(G)$ be such that $T=S\phi=\phi S$ for some 
$\phi,S\in \Aut(G)$, where $\phi$ is contained in a compact subgroup of $\Aut(G)$ with respect to the modified compact-open topology. 
Then $T\in\nc$ if and only if $S\in\nc$. 
\end{lem}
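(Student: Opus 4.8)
The plan is to prove one direction of the equivalence by contraposition and then obtain the other direction from the symmetry of the hypothesis. Unwinding the definition, $T\notin\nc$ means there exist a nontrivial discrete cyclic subgroup $A$ of $G$ and a sequence $\{n_k\}\subset\N$ with $n_k\to\infty$ or $n_k\to-\infty$ such that $T^{n_k}(A)\to\{e\}$ in $\Sub_G$. The two structural facts I would rely on are: (i) since $S\phi=\phi S$, the maps $S$ and $\phi$ commute, so $T^n=\phi^n S^n=S^n\phi^n$ for every $n\in\Z$; and (ii) the whole sequence $\{\phi^n\}_{n\in\Z}$ lies in the compact subgroup of $\Aut(G)$ containing $\phi$, hence is relatively compact in the modified compact-open topology.

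First I would establish the implication $S\notin\nc\Rightarrow T\notin\nc$. Given $A$ and $\{n_k\}$ with $S^{n_k}(A)\to\{e\}$, I write $T^{n_k}(A)=\phi^{n_k}\big(S^{n_k}(A)\big)$ using the commutativity in (i). Since $\{\phi^{n_k}\}$ sits inside a compact subgroup of $\Aut(G)$, after passing to a subsequence (which leaves $n_k\to\pm\infty$ and the convergence $S^{n_k}(A)\to\{e\}$ intact) I may assume $\phi^{n_k}\to\psi$ for some $\psi\in\Aut(G)$. Then $\big(\phi^{n_k},S^{n_k}(A)\big)\to(\psi,\{e\})$ in $\Aut(G)\times\Sub_G$, and the continuity of the action supplied by Lemma \ref{mco} yields $T^{n_k}(A)\to\psi(\{e\})=\{e\}$ (an automorphism fixes the trivial subgroup). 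As $A$ is the same nontrivial discrete cyclic subgroup, this witnesses $T\notin\nc$.

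For the reverse implication I would exploit the symmetry of the hypothesis: from $T=\phi S=S\phi$ we obtain $S=\phi^{-1}T=T\phi^{-1}$, where $\phi^{-1}$ again lies in the same compact subgroup of $\Aut(G)$ and commutes with $T$. Hence the triple $(S,T,\phi^{-1})$ satisfies exactly the hypotheses that $(T,S,\phi)$ did in the first step, and applying that step verbatim gives $T\notin\nc\Rightarrow S\notin\nc$. Combining the two contrapositives gives $T\in\nc\iff S\in\nc$, as desired.

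The only delicate point is the interchange of limits concealed in the continuity step. I must work with the \emph{modified} compact-open topology, rather than merely the compact-open topology, so that Lemma \ref{mco} is available and, simultaneously, so that $\{\phi^{n_k}\}$, lying in a compact subgroup, genuinely admits a convergent subsequence in $\Aut(G)$. No orbit estimate in $G$ itself is required; the argument is driven entirely by the relative compactness of $\{\phi^n\}$ together with the joint continuity of $(T,H)\mapsto T(H)$ on $\Aut(G)\times\Sub_G$.
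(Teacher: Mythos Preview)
Your proof is correct and follows exactly the approach the paper indicates; the paper itself gives no detailed argument, stating only that the lemma ``can be proved easily using Lemma~\ref{mco}'', and your contrapositive argument via the joint continuity of the action together with the symmetry $\phi\leftrightarrow\phi^{-1}$ is precisely what is intended.
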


 For a locally compact group $G$, $T\in\Aut(G)$ and a compact $T$-invariant subgroup $K$ of $G$, 
$C_K(T)=\{x\in G\mid T^n(x)K\to K\mbox{ in } G/K\mbox{ as } n\to\infty\}$ is known as the $K$-contraction group of $T$. 
The group $C(T)=C_{\{e\}}(T)$ is known as the contraction group of $T$. It has been shown in \cite{RaSh19} that $T$ is distal 
if and only if both $C(T)$ and $C(T^{-1})$ are trivial. For a connected Lie group $G$, it is well-known that 
$C_K(T)=C(T)K$  \cite{HS}; see \cite{RaSh19} for a more general result on this decomposition. 

For an almost connected locally compact group $G$, let $K$ be the largest compact normal subgroup. Then $K$ is characteristic in $G$ 
and $G/K$ is a Lie group. As observed in \cite{RaSh19}, every inner automorphism of $G$ acts distally on $K$. If any $T\in\Aut(G)$ 
acts distally on $K$, then $C(T)$ is closed \cite[Proposition 4.3]{RaSh19}. The following useful lemma gives a more general result 
in the case of connected Lie groups. Note that the largest compact connected central subgroup of $G$ is characteristic in $G$. 
 
 \begin{lem} \label{central-cpt} Let $G$ be a connected Lie group and let $C$ be the largest compact connected central 
 subgroup of $G$. Let $T\in\Aut(G)$. Then the following hold:
 \begin{enumerate}
 \item[$(1)$] $\ol{C(T)}\subset C(T)C$.
 \item[$(2)$] If $T$ acts distally on $C$, then $C(T)$ is closed. In particular, if $G$ does not have any compact central subgroup of 
 positive dimension, then $C(T)$ is closed. 
 \end{enumerate}
  \end{lem}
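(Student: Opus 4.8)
The plan is to prove the lemma by first isolating and settling the decisive special case, and then bootstrapping from it. Throughout, write $\ol{C(T)}$ for the closure of the contraction group, and let $\G^-$ denote the sum of the generalized eigenspaces of the differential $dT$ on $\G$ for eigenvalues of modulus $<1$. I would use the standard facts that $C(T)$ is exactly the analytic subgroup with Lie algebra $\G^-$, that $\G^-$ is a nilpotent subalgebra (the moduli of eigenvalues multiply under brackets), and hence that $C(T)$ and $\ol{C(T)}$ are nilpotent.

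The crux is an auxiliary statement: \emph{if a connected Lie group $H$ has no compact central subgroup of positive dimension, then $C(S)$ is closed for every $S\in\Aut(H)$.} To prove this, note that $\ol{C(S)}$ is a connected nilpotent Lie group, so its maximal compact subgroup $R$ is a torus lying in the center of $\ol{C(S)}$ with $\ol{C(S)}/R$ simply connected; since every connected subgroup of a simply connected nilpotent group is closed, the dense connected subgroup $C(S)$ surjects onto $\ol{C(S)}/R$, giving $\ol{C(S)}=C(S)R$. It then remains to show $R$ is central in $H$, whereupon the hypothesis forces $R=\{e\}$ and $C(S)=\ol{C(S)}$ is closed. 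For this, observe that $\Ad(C(S))$ consists of unipotent operators (because $\mathrm{ad}(\G^-)$ is a family of nilpotent operators; nilpotency of $\mathrm{ad}(X)$ for $X\in\G^-$ follows from $dS\circ\mathrm{ad}(X)\circ dS^{-1}=\mathrm{ad}(dS\,X)$). Hence every element of $\Ad(\ol{C(S)})\subseteq\ol{\Ad(C(S))}$ has all eigenvalues equal to $1$; as $R$ is compact, each $\Ad(r)$, $r\in R$, is semisimple, and a semisimple operator with all eigenvalues $1$ is $\Id$. Thus $\Ad(r)=\Id$, i.e.\ $r\in Z(H)$, for all $r\in R$.

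For part (1) I would reduce to this via $\pi:G\to G/C$ with induced automorphism $\T$. Using the cited decomposition $C_C(T)=C(T)C$ and the identification $C_C(T)=\pi^{-1}(C(\T))$, it suffices to know $C(\T)$ is closed in $G/C$; then $C_C(T)$ is closed and, since $C(T)\subseteq C_C(T)$, we obtain $\ol{C(T)}\subseteq C_C(T)=C(T)C$. So I only need that $G/C$ has no compact central subgroup of positive dimension, and then invoke the auxiliary statement on $G/C$. That $G/C$ has no such subgroup is a short Lie-algebra computation: the preimage of a compact connected central subgroup of $G/C$ is a compact connected subgroup $\t S$ with $[\mathrm{Lie}(\t S),\G]\subseteq\mathrm{Lie}(C)$; compactness of $\t S$ makes each $\mathrm{ad}(X)$, $X\in\mathrm{Lie}(\t S)$, semisimple, while $\mathrm{ad}(X)^2=0$ because $\mathrm{Lie}(C)$ is central, so $\mathrm{ad}(X)=0$ and $\t S$ is central, contradicting maximality of $C$ unless it is trivial. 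Part (2) then follows from (1): we have $\ol{C(T)}=C(T)D$ with $D:=\ol{C(T)}\cap C$ a closed $T$-invariant subgroup of the torus $C$; distality of $T$ on $C$ gives $C(T)\cap C=\{e\}$, hence $C(T)\cap D=\{e\}$, and since the eigenvalues of $dT$ on $\mathrm{Lie}(C)$ have modulus $1$ while those on $\G^-$ have modulus $<1$, the $dT$-invariant space $\mathrm{Lie}(\ol{C(T)})\subseteq\G^-+\mathrm{Lie}(C)$ splits as $\G^-\oplus\mathrm{Lie}(D)$. Therefore the multiplication map $C(T)\times D\to\ol{C(T)}$ (with $C(T)$ in its intrinsic Lie topology and $D$ central) is a continuous bijective homomorphism of Lie groups of equal dimension, hence an isomorphism, which makes $C(T)$ closed in $\ol{C(T)}$; being dense, $C(T)=\ol{C(T)}$ is closed. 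The final ``in particular'' is the case $C=\{e\}$, i.e.\ the auxiliary statement for $G$ itself.

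I expect the main obstacle to be the centrality step in the auxiliary statement — showing that the maximal compact subgroup $R$ of $\ol{C(T)}$ is central in $G$ — since this is precisely what makes the hypotheses on $C$ (or on the center of $G$) bite; the $\Ad$-unipotent-versus-semisimple dichotomy is the essential input, and some care is needed to justify that $C(T)$ is nilpotent, that $\Ad(C(T))$ is unipotent, and that the eigenvalue condition ``all eigenvalues equal to $1$'' passes to the closure.
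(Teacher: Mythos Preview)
Your proof is correct, but it takes a genuinely different route from the paper's. The paper proves (1) directly by a measure-theoretic trick: it builds a probability measure $\mu$ supported on a countable dense subgroup of $\ol{C(T)}$, observes that $T^i(\mu)\to\delta_e$, and then invokes Theorem~1.1 of \cite{DS} to conclude $\ol{C(T)}=\supp\mu\subset C_C(T)=C(T)C$. For (2) it simply restricts to the closed subgroup $G'=C(T)C=C_C(\tau)$ and appeals to Corollary~2.7 of \cite{HS}, which says that $C(\tau)$ is closed once $C(\tau)\cap C=\{e\}$. You instead first establish the special case ``no compact central subgroup of positive dimension $\Rightarrow$ $C(T)$ closed'' by a purely Lie-theoretic argument (the $\Ad$-unipotent/semisimple dichotomy showing the maximal torus of $\ol{C(T)}$ is central in $G$), then deduce (1) by passing to $G/C$ and checking that $G/C$ satisfies this hypothesis, and finally prove (2) by a spectral splitting $\mathrm{Lie}(\ol{C(T)})=\G^-\oplus\mathrm{Lie}(D)$ and a dimension count. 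Your approach is more self-contained---it avoids the measure-theoretic input from \cite{DS} and the specific \cite{HS} corollary---at the cost of being longer and requiring the auxiliary verification that $G/C$ has no central torus; the paper's approach is quicker but leans on those external results. Both approaches use the basic identity $C_C(T)=C(T)C$ from \cite{HS}.
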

 
\begin{proof} (1) Let $D=\{d_n\}$ be a countable subgroup in $C(T)$ which is dense in $\ol{C(T)}$ and let 
$\mu=\sum_{n\in\N} 2^{-n}\delta_{d_n}$ be a probability measure on $G$. It is easy to see that $T^i(\mu)\to\delta_e$ as $i\to\infty$. 
Now we can apply Theorem 1.1 of \cite{DS} and get that $\ol{C(T)}=\supp\mu\subset C_C(T)$. As $G$ is a connected Lie group, 
we have that $C_C(T)=C(T)C$ \cite[Theorem 2.4]{HS}. Therefore, we get that $\ol{C(T)}\subset C(T)C$ and (1) follows. 
Moreover, $C(T)C=\ol{C(T)}C$ and hence $C(T)C$ is closed. 

(2) Now suppose $T$ acts distally on $C$. Then $C(T)\cap C=\{e\}$. Let $G'=C(T)C=\ol{C(T)}C$. Then $G'$ is a closed 
$T$-invariant subgroup of $G$ and $G'=C_C(\tau)$, where $\tau=T|_{G'}$. As $C(T)=C(\tau)$, $C(\tau)\cap C=\{e\}$. 
By Corollary 2.7 of \cite{HS}, we get that $C(\tau)$ is closed in $G'$. As $G'$ is closed in $G$, we get that $C(T)$ is closed. 
In particular, if $G$ does not have any compact central subgroup of positive dimension, i.e.\ if $C$ is trivial, then $C(T)$ is closed. 
\end{proof}

For $T\in\Aut(G)$, if the $T$-action on $G$ is distal, it does not imply that the $T$-action on $\Sub_G$ is distal; see Example \ref{counter}. 
Conversely, the following theorem shows that for a large class of locally compact groups $G$, the distality of the $T$-action on $\Sub_G$ 
implies the distality of the $T$-action on $G$.

\begin{thm}\label{Comp}
Let $G$ be a locally compact first countable $($metrizable$)$ group, $T\in\Aut(G)$ and let $K$ be the largest compact normal subgroup of $G^0$. 
Suppose $T$ acts distally on $\Sub_G$. Then the $T$-action on $G/K^0$ is distal. Moreover, if $T$ acts distally on $K^0$, then 
$T$ acts distally on $G$.
\end{thm}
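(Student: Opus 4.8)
The plan is to push everything down to the quotient $\ol G:=G/K^0$ and to show there that a nontrivial contraction group for the induced automorphism would violate distality on $\Sub_{\ol G}$. First I would note that $K$, being the largest compact normal subgroup of the characteristic subgroup $G^0$, is characteristic in $G^0$ and hence in $G$; thus its identity component $K^0$ is a closed normal $T$-invariant subgroup, and $T$ induces $\T\in\Aut(\ol G)$. Since $T$ acts distally on $\Sub_G$, Lemma \ref{Chab} gives that $\T$ acts distally on $\Sub_{\ol G}$. The \emph{moreover} part is then immediate: $K^0$ is closed, normal and $T$-invariant, so once the first assertion gives that $\T$ is distal on $\ol G=G/K^0$, the hypothesis that $T|_{K^0}$ is distal together with the extension property of distality (Theorem~1.3 of \cite{Sh12}) yields that $T$ is distal on $G$. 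Everything therefore reduces to showing that $\T$ is distal on $\ol G$.

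By the characterisation in \cite{RaSh19}, $\T$ is distal on $\ol G$ if and only if both $C(\T)$ and $C(\T^{-1})$ are trivial; replacing $T$ by $T^{-1}$ if necessary, I would derive a contradiction from $C(\T)\neq\{e\}$. The crucial first step is to show that $C(\T)$ is \emph{closed}, and this is where the passage to $\ol G$ is used: if $N/K^0$ were a nontrivial compact connected normal subgroup of $\ol G^0=G^0/K^0$, then its preimage $N$ would be a compact connected normal subgroup of $G^0$ strictly larger than $K^0$, contradicting maximality; hence $\ol G^0$ has no compact connected normal subgroup of positive dimension. Arguing as in Lemma \ref{central-cpt}---taking a countable dense subgroup $\{d_j\}$ of $\ol{C(\T)}$ lying in $C(\T)$, forming $\mu=\sum_j 2^{-j}\delta_{d_j}$, observing $\T^i\mu\to\delta_e$, and invoking Theorem~1.1 of \cite{DS}---one gets $\ol{C(\T)}\subset C(\T)\cdot C$, where $C$ is the largest compact connected central subgroup of $\ol G$; since $C$ is compact, connected and normal it lies in the (now trivial) largest compact connected normal subgroup of $\ol G^0$, so $C=\{e\}$ and $C(\T)$ is closed. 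I expect the closedness of $C(\T)$ in this generality to be the main obstacle, since contraction groups of automorphisms of locally compact groups are not closed in general.

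Granting this, set $C:=C(\T)$; it is $\T$-invariant and $\T|_C$ is a contractive automorphism, so $\T^n\to e$ uniformly on compact subsets of $C$. Choose $x\in C\setminus\{e\}$ and put $A:=\ol{\langle x\rangle}\subset C$, a closed monothetic group, hence either compact or infinite discrete cyclic. If $A$ is compact, uniform contraction on $A$ shows at once, via the criterion of Lemma \ref{Con}, that $\T^n(A)\to\{e\}$ in $\Sub_{\ol G}$. If $A\cong\Z$ is discrete, fix a neighbourhood $U$ of $e$ with $A\cap U=\{e\}$; for a subsequential limit $B$ of $\{\T^{-n}(A)\}$ and $b\in B$, write $b=\lim_k b_k$ with $b_k=\T^{-n_k}(x^{m_k})\in C$, so that the compact set $\{b\}\cup\{b_k\}$ lies in $C$ and uniform contraction forces $\T^{n_k}(b_k)=x^{m_k}\in A\cap U=\{e\}$ for large $k$; thus $b_k=e$ eventually and $b=e$, giving $\T^{-n}(A)\to\{e\}$. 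In either case a nontrivial closed subgroup $A$ is carried by $\T^{n_k}$, with $n_k\to+\infty$ or $n_k\to-\infty$, to $\{e\}$, while $\T^{n_k}(\{e\})=\{e\}$; hence $(\T^{n_k}(A),\T^{n_k}(\{e\}))\to(\{e\},\{e\})$ meets the diagonal, contradicting distality of $\T$ on $\Sub_{\ol G}$. Therefore $C(\T)=\{e\}$, and symmetrically $C(\T^{-1})=\{e\}$, so $\T$ is distal on $\ol G$, which completes the argument.
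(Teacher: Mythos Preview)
Your overall plan---pass to $\ol G=G/K^0$, use Lemma~\ref{Chab}, and show $C(\T)$ (and $C(\T^{-1})$) is trivial via \cite{RaSh19}---is exactly the paper's plan, and the \emph{moreover} clause is handled correctly. The substantive gap is the step where you claim $C(\T)$ is \emph{closed} in $\ol G$. You argue ``as in Lemma~\ref{central-cpt}'' and invoke Theorem~1.1 of \cite{DS} together with the decomposition $C_C(\T)=C(\T)\cdot C$; but both of those inputs are results for \emph{connected Lie groups}, whereas $\ol G$ is an arbitrary locally compact metrizable group (and $\ol G^0=G^0/K^0$ is in general not a Lie group, only an extension of the Lie group $G^0/K$ by the compact totally disconnected group $K/K^0$). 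So the closedness of $C(\T)$ is not established in the generality you need, and everything downstream---the uniform contraction on compacta of $C(\T)$, and hence both the compact and the discrete-cyclic cases of your endgame---rests on it. You yourself flag this as ``the main obstacle,'' and indeed it is not overcome.

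The paper gets around this not by proving closedness in $\ol G$ but by a case split. In Step~1 it treats the totally disconnected case directly: working with $\overline{C(T)}$ and a compact open subgroup $C_m$ of it, it shows $T^{-n_k}(C_m\cap\overline{C(T)})\to\overline{C(T)}$, contradicting distality on $\Sub_G$---no closedness of $C(T)$ is used. In Step~2 it uses Step~1 on $G/G^0$ to force $C(T)\subset G^0$, observes that the totally disconnected $K$ is central in $G^0$, applies Step~1 again to get $T|_K$ distal, and only then passes to the connected Lie group $G^0/K$, where Lemma~\ref{central-cpt} legitimately gives $C(T)$ closed. Your contraction argument with $A=\overline{\langle x\rangle}$ would then be a perfectly good (and somewhat cleaner) substitute for the paper's eigenvalue computation in Step~3; note, incidentally, that once $C(\T)$ is closed it is a contractible locally compact group and hence has no nontrivial compact subgroups, so your ``$A$ compact'' branch is actually vacuous and only the discrete-cyclic branch is needed.
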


\begin{proof} 
Let $G$ be as above and let $T\in\Aut(G)$. Suppose $T$ acts distally on $\Sub_G$. 
Let $K$ be as in the hypothesis. Note that $K^0$ is characteristic 
in $G$. In particular, it is $T$-invariant. We want to first show that the $T$-action on $G/K^0$ is distal. 

 Since $T$ acts distally on $\Sub_G$, by  Lemma \ref{Chab}, 
$T$ acts distally on $\Sub_{G/K^0}$. Hence, without loss of any generality, we may assume that $K$ as above is totally disconnected 
and show that $T$ is distal. By Theorem 4.1 of \cite{RaSh19}, $T$ is distal if and only if both $C(T)$ and $C(T^{-1})$ are trivial.

\noindent{\bf Step 1:} We first assume that $G$ is second countable. Suppose $G$ is totally disconnected. 
Then $K=\{e\}$. If possible, suppose that $C(T)$ is nontrivial. Since 
$G$ is totally disconnected, there exists a neighbourhood basis of open compact 
subgroups $\{C_m\}_{m\in\N}$ at the identity $e$ in $G$. Choose $m$ such that $C(T)\not\subset C_m$. Let $H=C_m\cap \ol{C(T)}$. 
As $\Sub_G$ is compact, there exists a sequence $\{n_k\}\subset\N$ such that $T^{-n_k}(H)\to L$ for 
some $L\in\Sub_G$. As $\ol{C(T)}$ is $T$-invariant and $H\subset \ol{C(T)}$, by Lemma \ref{H_n}\,(3) we get that 
$L\subset \ol{C(T)}$. Let $x\in C(T)$. Since $C_m$ is an open subgroup in $G$ and $C(T)$ is $T$-invariant, we get that 
$T^n(x)\in C_m\cap C(T)\subset H$ for all large $n$, and hence, that $x=T^{-n}(T^n(x))\in T^{-n}(H)$ for all large $n$. 
Therefore, $x\in L$ and hence, $C(T)\subset L$. As $L$ is closed, $\overline{C(T)}\subset L$. Thus, we have that $\overline{C(T)}=L$, 
i.e.\  $T^{-n_k}(H)\to\overline{C(T)}$. As $\ol{C(T)}$ is $T$-invariant and $H\ne \ol{C(T)}$, we get that $T$ does not 
act distally on $\Sub_G$, a contradiction. Therefore, $C(T)$ is trivial. Interchanging $T$ and $T^{-1}$ and arguing as above, we can
show that $C(T^{-1})$ is trivial. 

\noindent{\bf Step 2:} Suppose $G$ is not totally disconnected. Note that $G^0$ is $T$-invariant and $G/G^0$ is totally disconnected. 
Let  $\T:G/G^0\to G/G^0$ be the automorphism corresponding to $T$. Then from Lemma \ref{Chab}, $\T$ acts distally on $\Sub_{G/G^0}$. 
Hence, from the assertion in Step 1, $C(\T)$ and $C(\T^{-1})$ are trivial. Therefore, $C(T)\cup C(T^{-1})\subset G^0$. As $K$ is totally 
disconnected and normal in $G^0$, by Lemma 2.2 of \cite{Sh19}, it is central in $G^0$ (this also follows from Theorem $1'$ of \cite{I}). 
 Now $\Sub_K=\Sub^a_K$. From Step 1, if $T$ acts distally on $\Sub^a_K$, we get that $T$ acts distally on $K$. Now it is enough to 
 show that $T$ acts distally on $G/K$. As $T$ acts distally on $\Sub_{G/K}$, replacing $G$ by $G/K$ and $T$ by the corresponding 
 automorphism of $G/K$, we may assume that $G$ is a connected Lie group without any nontrivial compact normal subgroup. We have 
 also assumed that $T$ acts distally on $\Sub_G$. We want to show that $T$ is distal. 
   
\noindent{\bf Step 3:} We prove a more general statement: If $G$ is any connected Lie group, $T\in\Aut(G)$ acts distally on the largest 
compact connected central subgroup of $G$ and if $T\in\nc$, then $T$ is distal. 

Assume that $G$ and $T$ are as above. By Lemma \ref{central-cpt}\,$(2)$, both $C(T)$ and $C(T^{-1})$ are closed, 
and hence, simply connected and nilpotent. 

If possible, suppose $C(T)$ is nontrivial. Let $V=Z(C(T))$, the center of $C(T)$. Then $V\cong\R^m$, for some $m\in\N$. Let 
$T_1=T|_{V}$ be the restriction of $T$ to $V$. Then $T_1\in\GL(V)$ and $C(T_1)=V$, which is turn implies that all the eigenvalues 
of $T_1$ have absolute value less than 1. Suppose $T_1$ has a real eigenvalue $\lambda$. Then $0<|\lambda|<1$ and there exists 
a subspace $W\cong\R$ in $V$ such that $T_1(x)=\lambda x$ for all $x\in W$. For the discrete subgroup $\Z$ in $\R\cong W$, 
it is easy to see that $T_1^{-n}(\Z)=\lambda^{-n}\Z\to\{0\}$ as $n\to\infty$ in $\Sub_\R$ \cite{BC1}. As $T|_W=T_1|_W$, this 
leads to a contradiction as $T\in\nc$. 

Now suppose all the eigenvalues of $T_1$ are complex. There exists a $T_1$-invariant subspace $W'\cong\R^2$ in $V$ such that 
$T_1|_{W'}$ has a complex eigenvalue of the form $r(\cos\theta+i\sin\theta)$, where $r\in\R$ and $0<r<1$. Let 
$T_2\in \GL(2,\R)$ be such that $T|_{W'}=T_1|_{W'}=T_2$ (under the isomorphism of $\R^2$ with $W'$). Then $T_2=rA_\theta$, where 
$A_\theta=AR_\theta A^{-1}$ for some $A\in \GL(2,\R)$, and $R_\theta$ is the rotation by the angle $\theta$ on $\R^2$.
Here, $A_\theta$ generates a relatively compact group in $\GL(2,\R)$. Let $Z=\{(m,0)\mid m\in\Z\}$. Then $Z\in\Sub_{\R^2}$. Since $0<r<1$, 
we have $r ^{-n}Z\to\{(0,0)\}$ as $n\to\infty$ \cite{BC1}, and that $r\,\Id\not\in\nc$. Since $T_2=(r\,\Id)A_\theta=A_\theta(r\,\Id)$,
and $A_\theta$ generates a relatively compact group, by Lemma \ref{cpt-star}, we get that
$T_2\not\in\nc$. As $T_2=T|_{W'}$, this leads to a contradiction. Hence $C(T)$ is trivial. Replacing $T$ by $T^{-1}$ and arguing 
as above, we get that $C(T^{-1})$ is also trivial. Therefore, $T$ is distal. 

We have proved the first assertion in case $G$ is second countable. Now suppose $G$ is not second countable. 
As noted before Step 1, $K$ can be assumed to be totally disconnected and $T$ acts distally on $\Sub_G$. We need to show that $T$ is distal. 
Suppose $x\in G$ is such that $T^{n_k}(x)\to e$ in $G$ for sone sequence $\{n_k\}\subset \Z$. 
Let $L_x$ be the subgroup generated by $O_x=\{T^n(x)\mid n\in\Z\}$ in $G$ and let 
$H=\ol{L_xG^0}$. Then $H$ is a closed $T$-invariant subgroup of $G$. As $O_x$, and hence, $L_x$ is countable, we have that 
$H/H'$ is countable for some open almost connected subgroup $H'$ of $H$.  Since $H'$ is compactly generated and first countable, and hence 
second countable, we get that $H$ is second countable. As $H^0=G^0$, $K$ is the largest compact normal subgroup of $H^0$
and $K$ is totally disconnected. Since $T$ acts distally on $\Sub_G$ and since $\Sub_H$ is closed in $\Sub_G$, 
we have that $T|_H$ acts distally on $\Sub_H$. As $H$ is second countable, we get from above that $T|_H$ is distal. 
Therefore, $x=e$ and $T$ is distal.

We have proved that if $T$ acts distally on $\Sub_G$, then $T$ acts distally on $G/K^0$, where $K$ is the largest compact normal subgroup 
of $G^0$. Moreover, if $T$ also acts distally on $K^0$, then it is easy to show that $T$ acts distally on $G$. 
\end{proof}
  
We get a stronger result in case of Lie groups as follows.

\begin{cor} \label{Lie-distal} Let $G$ be a Lie group with not necessarily finitely many connected components and let 
$T\in\Aut(G)$ and let $C$ be the largest compact connected central 
subgroup of $G^0$. Then the following hold:
\begin{enumerate}
\item[{$(a)$}] If $T$ acts distally on $C$ and $T\in\nc$, then $T$ acts distally on $G$. 
\item[{$(b)$}] If $T$ acts distally on $\Sub^a_G$, then $T$ acts distally on $G/C$.
\end{enumerate}
\end{cor}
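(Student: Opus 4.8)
The plan is to reduce both parts to the case of a connected Lie group, where the required implication is exactly the statement proved in Step 3 of the proof of \thmref{Comp} (a connected-Lie-group automorphism lying in $\nc$ and acting distally on the largest compact connected central subgroup is distal), and to move between $G$ and its identity component $G^0$ using the distality decomposition of \cite{Sh12}: an automorphism is distal if and only if its restriction to a closed normal invariant subgroup and the induced map on the quotient are both distal. The two recurring facts I would use are that in a Lie group $G^0$ is open, closed and characteristic, so $G/G^0$ is discrete and every automorphism of it is automatically distal (orbits are discrete, so $e$ can lie only in its own orbit); and that for a closed subgroup $H$, $\Sub_H$ is a closed subspace of $\Sub_G$ with matching convergence.

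For part $(a)$, I would first note that $C$ is characteristic in $G^0$, hence in $G$, so it is $T$-invariant, and by hypothesis $T$ is distal on $C$. Since $G^0$ is open and closed, for subgroups contained in the $T$-invariant set $G^0$ convergence in $\Sub_{G^0}$ agrees with convergence in $\Sub_G$; as every discrete nontrivial cyclic subgroup of $G^0$ is one of $G$, it follows that $T\in\nc$ on $G$ forces $T|_{G^0}\in\nc$ on $G^0$. Now $G^0$ is a connected Lie group with largest compact connected central subgroup $C$, and $T|_{G^0}$ is distal on $C$ and lies in $\nc$; Step 3 of the proof of \thmref{Comp} then gives that $T|_{G^0}$ is distal. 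Since the induced map on the discrete group $G/G^0$ is distal, the decomposition criterion of \cite{Sh12} (with $H=G^0$) yields that $T$ is distal on $G$.

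For part $(b)$, I would again pass to $G^0$ first. Taking $H=G^0$ and using that $\Sub^a_{G^0}$ is a closed subspace of $\Sub_G$ while $\Sub^a_G$ is closed in $\Sub_G$, distality of $T$ on $\Sub^a_G$ descends to distality of $T|_{G^0}$ on $\Sub^a_{G^0}$. In the connected group $G^0$ the subgroup $C$ is central and $T$-invariant, so \lemref{abelian} (with $Z=C$) shows that the induced automorphism $\tilde T$ on $G^0/C$ belongs to $\nc$. Granting the structural claim below that $G^0/C=(G/C)^0$ has \emph{no} nontrivial compact connected central subgroup, $\tilde T$ is trivially distal on that (trivial) subgroup and lies in $\nc$, so part $(a)$ applied to the connected Lie group $G^0/C$ gives that $\tilde T$ is distal on $G^0/C$. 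Finally, since the map induced on the discrete quotient $(G/C)/(G^0/C)\cong G/G^0$ is distal, the decomposition criterion of \cite{Sh12} gives that $\T$ is distal on $G/C$, i.e.\ $T$ acts distally on $G/C$.

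The main obstacle is the structural claim: if $C$ is the largest compact connected central subgroup of a connected Lie group $L$, then $L/C$ has no nontrivial compact connected central subgroup. To prove it I would suppose $D/C$ were such a subgroup, with preimage $D\supsetneq C$. Then $D$ is compact (an extension of the compact group $D/C$ by the compact group $C$), connected, and normal in $L$, and centrality of $D/C$ gives $[L,D]\subseteq C\subseteq Z(L)$. Hence $[D,D]\subseteq [L,D]\subseteq Z(L)$ while $[D,D]\subseteq D$, so $[D,D]\subseteq Z(L)\cap D\subseteq Z(D)$; thus $D$ is nilpotent, and a compact connected nilpotent Lie group is a torus, so $D$ is abelian. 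A torus normal in the connected group $L$ is centralized by $L$, since the conjugation homomorphism $L\to\Aut(D)\cong\GL(n,\Z)$ has connected, hence trivial, image; therefore $D\subseteq Z(L)$. Being compact and connected, maximality of $C$ forces $D\subseteq C$, contradicting $D\supsetneq C$. This establishes the claim and completes part $(b)$.
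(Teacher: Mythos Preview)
Your proposal is correct and follows essentially the same route as the paper's proof: reduce to the connected component $G^0$ using that $G/G^0$ is discrete (hence every automorphism on it is distal), invoke Step~3 of the proof of \thmref{Comp} for part~(a), and for part~(b) apply \lemref{abelian} with $Z=C$ to get that the induced map on $G^0/C$ lies in $\nc$, then use part~(a). The only real difference is that the paper simply asserts that $G/C$ (equivalently $G^0/C$) has no compact central subgroup of positive dimension, whereas you supply a full proof of this structural claim; your argument for it (pull back a putative compact connected central $D/C$, show $D$ is a normal torus via nilpotency, hence central by connectedness of $L$ and discreteness of $\Aut(D)$, contradicting maximality of $C$) is correct and a nice addition.
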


\begin{proof}  As $G/G^0$ is discrete and $C$ as above is characteristic in $G$, $T$ acts distally 
on $G$ (resp.\ $G/C$) if and only if $T$ acts distally on $G^0$ (resp.\ $G^0/C$). Moreover, $\Sub^a_{G^0}$ is closed in $\Sub^a_G$ 
and, if $T\in\nc$ (resp.\ $T$ acts distally on $\Sub^a_G$), then $T|_{G^0}\in\nc$ (resp.\ $T|_{G^0}$ acts distally on 
$\Sub^a_{G^0}$). Thus, to prove (a) and (b), we may assume that $G$ is connected and $C$ is central in $G$. 
Now (a) follows from the general statement in Step 3 of the proof of Theorem \ref{Comp}. To prove (b), suppose $T$ acts distally 
on $\Sub^a_G$, where $G$ is connected as assumed above. As $C$ is central in $G$, by Lemma \ref{abelian} we get that the 
automorphism of $G/C$ corresponding to $T$ belongs to $\nc$. As $G/C$ does not have a compact central 
subgroup of positive dimension, (a) implies that $T$ acts distally on $G/C$. 
\end{proof}

\begin{rem} \label{rem1} Note that the first condition in $(a)$ of Corollary 3.7 is necessary as every automorphism of the compact connected abelian
Lie group $G=\TT^n$, ($n\geq 2$), belongs to $\nc$, but $G$ admits automorphisms which are not distal. Here, $\Aut(G)$ is isomorphic to 
$\GL(n,\Z)$. Any closed cyclic group $A$ of $G$ is finite and it is contained in a finite characteristic subgroup $A_m$, the set of all $m$-th 
roots of unity, for some $m\in\N$ which depends on $A$. Now for any $T\in\Aut(G)$, $T|_{A_m}$ has finite order, and hence $T\in\nc$. 
However, if we take any hyperbolic map in $\GL(2,\Z)$, its eigenvalues are of absolute value other than 1, and hence it is not distal on
$\TT^2$. For $n\geq 3$, we can take a hyperbolic map on $\TT^2$ and extend it to $\TT^n\cong\TT^2\times \TT^{n-2}$ naturally by 
putting the identity map on $\TT^{n-2}$.
\end{rem}

A locally compact group $G$ is said to be pointwise distal if every inner automorphism is distal on $G$. The group $G$ is said 
to be distal if the conjugacy action of $G$ on $G$ is distal, i.e.\ for $x\in G$, the closure of 
$\{gxg^{-1}\mid g\in G\}$ does not contain the identity $e$ unless $x=e$. We get the following for inner automorphisms of almost 
connected groups.

\begin{cor}  \label{alm-conn} Let $G$ be an almost connected locally compact Hausdorff group. If every inner 
automorphism of $G$ belongs to $\nc$, then $G$ is distal. In particular, if every inner automorphism of $G$ acts distally 
on $\Sub^a_G$, then $G$ is distal. 
\end{cor}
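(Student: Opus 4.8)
The plan is to reduce the second (``in particular'') assertion to the first: by the observation recorded in the introduction that $T$ acting distally on $\Sub^a_G$ forces $T\in\nc$, the hypothesis that every inner automorphism acts distally on $\Sub^a_G$ already yields that every inner automorphism belongs to $\nc$. So it suffices to prove that if every inner automorphism of $G$ lies in $\nc$, then $G$ is distal. I would split this into two parts: first show that $G$ is \emph{pointwise} distal (every inner automorphism is distal on $G$), and then upgrade pointwise distality to distality using the structure of almost connected groups.

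For the pointwise part, fix $g\in G$ and set $T=\Inn(g)$. Let $K$ be the largest compact normal subgroup of $G$; since $G$ is almost connected, $K$ is characteristic and $G/K$ is a Lie group. As recalled before Lemma \ref{central-cpt}, $T$ acts distally on $K$, so by Proposition 4.3 of \cite{RaSh19} the contraction group $C(T)$ is closed, while distality on $K$ gives $C(T)\cap K=\{e\}$. Writing $\pi\colon G\to G/K$, the restriction $\pi|_{C(T)}$ is then a continuous injection with closed image (as $K$ is compact and $C(T)$ is closed), so $C(T)$ is isomorphic to a closed subgroup of the Lie group $G/K$; being a closed contraction group, it is a simply connected nilpotent real Lie group. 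If $C(T)\neq\{e\}$, its center $V=Z(C(T))\cong\R^m$ is $T$-invariant with $T_1=T|_V$ having all eigenvalues of modulus $<1$, and I would run verbatim the argument of Step 3 of Theorem \ref{Comp}: a real eigenvalue yields a line $W\cong\R$ with $T_1^{-n}(\Z)\to\{0\}$, and a complex eigenvalue yields a plane $W'\cong\R^2$ on which $T_1=(r\,\Id)A_\theta$ with $0<r<1$ and $A_\theta$ relatively compact, so $T_1\notin\nc$ by Lemma \ref{cpt-star}. Either way some discrete cyclic subgroup is contracted to $\{e\}$, contradicting $T\in\nc$. Hence $C(T)=\{e\}$, and symmetrically $C(T^{-1})=\{e\}$, so $T$ is distal by Theorem 4.1 of \cite{RaSh19}. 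Thus $G$ is pointwise distal.

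To upgrade, I would first reduce the distality of $G$ to that of $G/K$: since $K$ is characteristic and compact, the conjugation action is automatically distal on $K$, and $\Inn(G)$ induces $\Inn(G/K)$, so by the stability of distal actions under such compact extensions, $G$ is distal iff $G/K$ is distal. Now $G/K$ is an almost connected Lie group, so $G/K$ is distal iff its identity component $(G/K)^0$ is distal under the conjugation action of $G/K$ (the quotient $(G/K)/(G/K)^0$ being finite). The inner automorphisms that we have just shown to be distal project, via $\pi$, to distal automorphisms of $(G/K)^0$, which by the Lie-group criterion (Abels \cite{A1,A2}) forces every $\Ad_{(G/K)^0}(h)$, $h\in G/K$, to have all eigenvalues of modulus $1$; that is, $(G/K)^0$ is of type R. The decisive external input is the classical equivalence, for connected Lie groups, between being of type R and being distal as a group (Abels \cite{A1,A2}; see also \cite{RaSh10}), which then gives that $G/K$, and therefore $G$, is distal.

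I expect the main obstacle to be exactly this last upgrade from pointwise distality to distality: pointwise distality constrains only the cyclic groups $\{T^n\}$ generated by single conjugations, whereas distality concerns the full conjugacy orbit, and bridging the two genuinely requires the type R $\Leftrightarrow$ distal theorem for connected Lie groups rather than any formal manipulation. A secondary technical point, needed to make the contraction-group argument work in the non-Lie setting of the second paragraph, is the identification of $C(T)$ with a simply connected nilpotent real Lie subgroup of $G/K$; this relies crucially on the distality of $T$ on the compact normal subgroup $K$, which is what guarantees both that $C(T)$ is closed and that $C(T)\cap K=\{e\}$.
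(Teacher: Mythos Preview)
Your argument for pointwise distality is essentially the paper's: the paper cites Proposition 4.3 of \cite{RaSh19} to get that $C(T)$ is a closed simply connected nilpotent Lie group, then applies Corollary \ref{Lie-distal}\,$(a)$ to $C(T)$ (which has no compact central torus) to conclude $C(T)=\{e\}$; you instead unpack that corollary by re-running Step 3 of Theorem \ref{Comp} explicitly. Either way the content is the same.

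The divergence is in the upgrade from pointwise distal to distal. The paper dispatches this in a single line by citing Theorem 9 of Rosenblatt \cite{Ro}, which says directly that a locally compact group is pointwise distal if and only if it is distal. Your route through $G/K$ and the type-R characterisation is circuitous and has a gap: the reduction ``$G$ is distal iff $G/K$ is distal'' requires that the conjugation action of the \emph{whole group} $G$ on $K$ be distal --- not merely that each individual $\Inn(g)$ be distal on $K$, which is all that \cite{RaSh19} gives you --- and you assert this is ``automatic'' from compactness of $K$, which it is not (compactness of the space acted upon does not force distality; think of $\Z$ acting on $\TT^2$ by a hyperbolic matrix). This step may well be salvageable in the almost-connected setting, but it needs an argument. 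And even granting it, the decisive external input at the end of your chain (type R $\Leftrightarrow$ distal for connected Lie groups, via Abels) is the connected-Lie-group case of Rosenblatt's theorem, so the detour gains nothing over the one-line citation the paper uses.
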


\begin{proof}
 Let $K$ be the largest compact normal subgroup of $G$. Let $T$ be an inner automorphism of $G$. It is easy to see that $T$ acts 
 distally on $K$ (see the last part of the proof of Theorem 5.2 in \cite{RaSh19}). Hence $C(T)$ is closed and a simply connected nilpotent Lie group 
 \cite[Proposition 4.3]{RaSh19}. In particular, it has no nontrivial compact subgroups. Moreover, if $T\in\nc$, then $T|_{C(T)}$ also 
 belongs to $\nc$, and from Corollary \ref{Lie-distal}\,$(a)$, it follows that $T|_{C(T)}$ is distal  i.e.\ $C(T)=\{e\}$. Since this is true for all inner automorphisms of $G$, by 
 Theorem 4.1 of \cite{RaSh19}, we have that $G$ is pointwise distal, and hence it is distal 
 \cite[Theorem 9]{Ro}. The second assertion follows from the first. 
\end{proof}

Recall that an automorphism $T$ of a connected Lie group is unipotent if the corresponding map $\du T$ on the Lie algebra is a 
unipotent linear transformation, i.e.\ all the eigenvalues of $\du T$ are equal to 1. The following proposition will be very useful.  
In the special case of vector spaces, it is well-known as Kolchin's Theorem. It is also easy to deduce for compact connected 
abelian Lie groups by considering the corresponding map on the Lie algebra (see also Lemma 2.5 in \cite{A2}). 

\begin{prop} \label{kolchin} Let $G$ be a connected nilpotent Lie group and let $T\in\Aut(G)$ be unipotent. Then either $T=\Id$ or 
$G$ has an increasing sequence of closed connected normal $T$-invariant subgroups 
$\{e\}= G_0 \subset G_1\subset \cdots \subset G_n=G$, $n\geq 2$, such that $T$ acts trivially on $G_i/G_{i-1}$, $1\leq i\leq n$, and 
$T$ does not act trivially on $G_{i+1}/G_{i-1}$, $1\leq i\leq n-1$.
\end{prop}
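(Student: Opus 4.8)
The plan is to pass to the Lie algebra $\G$ of $G$, build the required chain there as an increasing sequence of $\du T$-invariant ideals, and then exponentiate. Set $N=\du T-\Id$; unipotence of $T$ means exactly that $N$ is a nilpotent operator on $\G$. The dictionary I would use is that for connected subgroups $H'\subset H$ of $G$ whose Lie algebras are $\du T$-invariant ideals $\mathfrak h'\subset\mathfrak h$, the automorphism induced by $T$ on $H/H'$ is trivial if and only if $N(\mathfrak h)\subseteq\mathfrak h'$ (an automorphism of a connected Lie group being trivial precisely when its differential is). Hence it suffices to produce a chain of $\du T$-invariant ideals $0=\mathfrak g_0\subset\mathfrak g_1\subset\cdots\subset\mathfrak g_n=\G$ with $N(\mathfrak g_i)\subseteq\mathfrak g_{i-1}$, each of which exponentiates to a closed connected normal $T$-invariant subgroup.

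The algebraic existence of such a chain I would prove, for an arbitrary nilpotent Lie algebra with a unipotent automorphism $A$ (so $M=A-\Id$ is nilpotent), by induction on dimension using the center $\mathfrak z$. Since $\mathfrak z$ is characteristic it is $A$-invariant, and $M|_{\mathfrak z}$ is nilpotent, so its kernel filtration $\ker(M|_{\mathfrak z})^j$ gives a chain of $A$-invariant subspaces of $\mathfrak z$ — automatically ideals, because $\mathfrak z$ is central — with $M$ sending each into the previous. Applying the induction hypothesis to the quotient by $\mathfrak z$ and pulling back along the projection extends this to a chain for the whole algebra; the fact that every term of the pulled-back part contains $\mathfrak z$ guarantees the relation $M(\mathfrak g_i)\subseteq\mathfrak g_{i-1}$ is preserved across the junction.

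The main obstacle is that a $\du T$-invariant ideal need not exponentiate to a closed subgroup — for instance an irrational line in a torus is a connected normal $\Id$-invariant subgroup that is not closed — so the algebraic chain is not directly usable. To handle this I would single out the maximal compact subgroup $K$ of $G$, which for a connected nilpotent Lie group is a central torus and is characteristic, hence $T$-invariant. On its Lie algebra $\mathfrak k$ the map $\du T$ preserves the lattice $\ker(\exp|_{\mathfrak k})$ and so is an integer matrix; therefore $N|_{\mathfrak k}$ is an integer matrix and each $\ker(N|_{\mathfrak k})^j$ is a rational subspace, exponentiating to a closed subtorus. Building the bottom of the chain inside $K$ from this kernel filtration, and the top by applying the lemma of the previous paragraph to $\G/\mathfrak k$ — the Lie algebra of the simply connected nilpotent group $G/K$, in which every connected subgroup is closed — and pulling back along $G\to G/K$, yields a chain $\{e\}=G_0\subset\cdots\subset G_n=G$ of closed connected normal $T$-invariant subgroups with $T$ trivial on every successive quotient.

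It remains to force the non-triviality clause that $T$ does not act trivially on $G_{i+1}/G_{i-1}$. For this I would simply prune: as long as there is an index $i$ for which $T$ does act trivially on $G_{i+1}/G_{i-1}$, delete $G_i$ from the chain. The new successive quotient at that spot is exactly $G_{i+1}/G_{i-1}$, on which $T$ is trivial by assumption, so the defining property of the chain persists, and each deletion strictly shortens it; the procedure therefore terminates at a chain meeting both requirements.
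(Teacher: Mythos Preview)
Your proof is correct and follows essentially the same route as the paper's: split off the maximal compact (central) torus $K$, build the flag inside $K$ and then in the simply connected quotient $G/K$ via the upper central series, pull back, and finally prune to enforce the non-triviality on double quotients. The one substantive difference is the torus step: the paper quotes Lemma~2.5 of Abels~\cite{A2} to get the flag of closed $T$-invariant subtori, whereas you argue directly that $\du T$ preserves the kernel lattice of $\exp|_{\mathfrak k}$, so $N|_{\mathfrak k}$ is an integer matrix and each $\ker(N|_{\mathfrak k})^j$ is a rational subspace exponentiating to a closed subtorus. That direct argument is a nice self-contained replacement for the citation; otherwise the two proofs differ only cosmetically (you phrase everything at the Lie-algebra level, and your pruning is by deletion rather than the paper's greedy selection, which amounts to the same thing).
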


\begin{proof} Suppose $T\ne\Id$. As $G$ is nilpotent, $G/K$ is simply connected, where $K$ is the largest compact 
connected central subgroup in $G$. Then $G$ has a sequence of closed connected normal $T$-invariant subgroups 
$\{e\}=Z_0\subset Z_1=K\subset \cdots \subset Z_m=G$, such that $Z_1/Z_0=K$ and if $G$ is not compact, $Z_i/Z_{i-1}$ is 
a vector group and it is the center of $G/Z_{i-1}$, for all $i=2,\ldots, m$. The automorphism on each $Z_i/Z_{i-1}$ 
corresponding to $T$ is unipotent. From  
Kolchin's Theorem for vector spaces and Lemma 2.5 of \cite{A2} for compact connected abelian Lie groups, it follows that there 
exists a sequence of closed connected normal subgroups in each $Z_i/Z_{i-1}$ such that $T$-acts trivially on each successive 
quotient. Taking the pre-images of these subgroups in $Z_i$, we see that they are closed and $T$-invariant. Also since $Z_i/Z_{i-1}$ 
is central in $G/Z_{i-1}$, any subgroup of $Z_i$ which contains $Z_{i-1}$ is normal in $G$.   Now we have a finite sequence of  
closed connected normal $T$-invariant subgroups $\{e\}=H_0\subset \cdots\subset H_l=G$, such that $T$ acts trivially on 
$H_i/H_{i-1}$, $i=1,\ldots l$. As $T\ne\Id$ and $T$ acts trivially on $H_1$, there exists $i_1$ such that $1\leq i_1< l$, $T$ acts 
trivially on $H_{i_1}$ and $T$ does not act trivially on $H_{i_1+1}$. Let $G_1=H_{i_1}$. Having chosen $G_k=H_{i_k}$ for 
$k\geq 1$, if $G_k\ne G$ (equivalently, $i_k\ne l$), we choose $G_{k+1}=H_j$, where $i_k<j\leq l$ and $j$ is the largest such 
natural number such that $T$ acts trivially on $H_j/G_k$.  As $l$ is finite, there exists $n$ such that $G_n=H_l=G$. 
\end{proof}

The following is known but we give a proof for the sake of completeness. It does not hold for general compact abelian groups as there 
are some compact totally disconnected abelian groups which are torsion-free, e.g.\ $\Z_p$, the ring of $p$-adic integers in $\Q_p$, 
$p$ a prime.

\begin{lem} \label{cpt-roots}
Let $G$ be a nontrivial connected compact abelian Lie group. Let $B_n=\{x\in G\mid x^n=e\}$, $n\in\N$. Then the following holds: 
If $\{a_n\}\subset\N$ is an unbounded sequence, then $\cup_n B_{a_n}$ is dense in $G$. In particular, the torsion group 
$B=\cup_nB_n$ is a dense subgroup in $G$. 
\end{lem}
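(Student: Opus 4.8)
The plan is to reduce the statement to the standard torus model and then use that the $n$-torsion points form an arbitrarily fine net as $n\to\infty$. Since $G$ is a nontrivial connected compact abelian Lie group, the structure theory of such groups yields an isomorphism $G\cong\TT^d=(\R/\Z)^d$ with $d=\dim G\geq 1$ (equivalently, the exponential map $\exp\colon\G\to G$ is a surjective homomorphism whose kernel is a full-rank lattice $\Lambda$, and $G\cong\G/\Lambda$). I would fix such an identification $G=(\R/\Z)^d$ and equip $G$ with the quotient sup-metric $\rho$.

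First I would describe $B_n$ explicitly. Writing an element of $G$ as $x=(x_1+\Z,\dots,x_d+\Z)$, the condition $x^n=e$ means $nx_i\in\Z$ for each $i$, so $B_n=\big(\tfrac1n\Z/\Z\big)^d$ consists of the $n^d$ points all of whose coordinates lie in $\{0,\tfrac1n,\dots,\tfrac{n-1}{n}\}$. The key geometric fact is that $B_n$ is a $\tfrac1{2n}$-net in $G$: given any $y=(y_1,\dots,y_d)\in\R^d$ and letting $k_i\in\Z$ be a nearest integer to $ny_i$, one has $|y_i-k_i/n|\leq\tfrac1{2n}$ for each $i$, so the point $(k_1/n,\dots,k_d/n)\in B_n$ lies within $\rho$-distance $\tfrac1{2n}$ of $x=y+\Z^d$. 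Note this holds for every $n$, with no divisibility condition, which is why mere unboundedness of $\{a_n\}$ will suffice.

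Next I would prove density of $\cup_n B_{a_n}$. As $\{a_n\}$ is unbounded, pass to a subsequence with $a_{n_j}\to\infty$. Given $x\in G$ and $\varepsilon>0$, choose $j$ with $1/(2a_{n_j})<\varepsilon$; the net property produces a point of $B_{a_{n_j}}\subset\cup_n B_{a_n}$ within $\varepsilon$ of $x$. Hence $\cup_n B_{a_n}$ is dense in $G$.

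Finally, for the ``in particular'' assertion, applying the above with $a_n=n$ (an unbounded sequence) shows that $B=\cup_n B_n$ is dense; and $B$ is a subgroup because $G$ is abelian: if $x^m=e$ and $y^k=e$, then $(xy^{-1})^{mk}=e$, so $xy^{-1}\in B_{mk}\subset B$. The argument is elementary, and the only step requiring genuine input is the reduction to the torus model via the classification of connected compact abelian Lie groups; once that is in place, the fine-net estimate is routine and poses no real obstacle.
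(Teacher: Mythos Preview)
Your proof is correct and takes essentially the same approach as the paper: both identify $G$ with a torus and use that the $n$-torsion points form an arbitrarily fine net as $n\to\infty$. The only cosmetic difference is that the paper reduces further to the circle $\Sc^1$ and argues via a one-parameter parametrization, whereas you work directly in $\TT^d$ with the quotient sup-metric and the explicit $\tfrac{1}{2n}$-net estimate.
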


\begin{proof} 
 Note that each $B_n$ is a finite subgroup and as $G$ is abelian, $B_mB_n\subset B_{mn}$, $m,n\in\N$. Hence $B$ is a group. 
It follows from the first statement that $B$ is dense in $G$. 

Let $B'=\cup_n B_{a_n}$. We may assume that $\{a_n\}$ is strictly increasing and that $a_n\to \infty$. As $G=\TT^k=(\Sc^1)^k$, 
where $\Sc^1$ is the unit circle and $k=\dim G$, it is enough to prove the statement for $G=\Sc^1$. There exists a continuous real 
one-parameter subgroup $\{x(t)\}_{t\in\R}$ such that $x(0)=x(1)=e$, $x(t)\ne e$ for all $t\in ]0,1[$ and $G=\{x(t)\mid t\in[0,1[\}$. 
Now $x(1/a_n)\in B_{a_n}$ and $x(1/a_n)\to x(0)=e$. Let $t\in ]0,1[$.  As $1/a_n\to 0$, there exist $m_n\in\N$, $n\in\N$, such 
that $m_n<a_n$ for all large $n$, and $m_n/a_n\to t$. Therefore, $x(m_n/a_n)\to x(t)$. As $x(m_n/a_n)\in B_{a_n}$, we get that 
$B'$ is dense in $G$. 
\end{proof}

Note that in a locally compact group $G$, the subgroup $G_x$ generated by an element $x$ is either closed and hence a discrete 
cyclic group, or its closure $\ol{G}_x$, being a compactly generated (locally compact) abelian group, is compact. If $G$ is a 
Lie group, then either $G_x$ is discrete or $\ol{G}_x$ is a compact Lie group with finitely many connected components. 

We now state and prove a lemma which will play a crucial role in the proofs of main results in Section 4. Since we need to use 
Lemma~\ref{cpt-roots} for the proof, we state it only for Lie groups even though it may hold for some locally compact 
(non-discrete) groups.

For $T\in\Aut(G)$, let $S_T=\{x\in G\mid T(x)=x\}$ be the stabiliser of $T$. It is a closed subgroup of $G$. 

\begin{lem} \label{discrete}
Let $G$ be a Lie group with not necessarily finitely many connected components and let $T\in\Aut(G)$. 
Let $H$ be a closed normal $T$-invariant subgroup of $G$ such that $H$ does not contain any nontrivial compact subgroup 
and $T|_H=\Id$. Suppose there exists $x\in G$ such that $T(x)=xy$ for some $y\in H$, $y\ne e$.
Let $G_x$ be the cyclic subgroup generated by $x$. If $G_x\cap S_T=\{e\}$, then $T\not\in\nc$.
\end{lem}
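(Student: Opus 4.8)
The plan is to exhibit a single discrete nontrivial cyclic subgroup together with a sequence $n_k\to\infty$ along which its $T$-images converge to $\{e\}$ in $\Sub_G$; the natural candidate is $A=G_x$. Let $\pi:G\to G/H$ denote the quotient map. The computational backbone is the identity
\[
T^n(x^k)=x^k c_k^{\,n},\qquad c_k:=x^{-k}T(x^k),\ n\in\Z,
\]
which I would prove by an immediate induction once I observe two things: each $c_k$ lies in $H$ (because $T$ induces the identity on $G/H$, as $T(x)=xy$ with $y\in H$ gives $T(x^k)\equiv x^k$ modulo $H$), and $T$ fixes $H$ pointwise, so $T(c_k^{\,n})=c_k^{\,n}$.

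Next I would extract the two qualitative facts that drive the contraction. First, the hypothesis $G_x\cap S_T=\{e\}$ says exactly that $x^k\notin S_T$ whenever $x^k\neq e$, i.e.\ $c_k\neq e$ for every such $k$. Second, since $c_k\in H$ and $H$ contains no nontrivial compact subgroup, the dichotomy recalled just before the lemma forces $\langle c_k\rangle$ to be a discrete infinite cyclic group; hence $c_k^{\,n}\to\infty$ (it leaves every compact set) as $n\to\pm\infty$. Combining these with the displayed identity, for each \emph{fixed} $k$ with $x^k\neq e$ the orbit $T^n(x^k)=x^k c_k^{\,n}$ diverges in $G$ as $n\to\infty$. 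This is, in the present generality, the analogue of the second-coordinate blow-up underlying Example \ref{counter}.

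It then remains to verify, via the convergence criterion of Lemma \ref{Con}, that $T^n(G_x)\to\{e\}$: condition (II) is automatic since $e\in T^n(G_x)$, so I must check condition (I), namely that any convergent sequence $h_n\in T^n(G_x)$, $h_n\to g$, has $g=e$. Writing $h_n=T^n(x^{k_n})=x^{k_n}c_{k_n}^{\,n}$ and projecting gives $\pi(h_n)=\bar x^{\,k_n}\to\pi(g)$, so the whole difficulty is to control the exponents $k_n$. If $\langle\bar x\rangle$ is discrete and infinite in $G/H$, convergence of $\bar x^{\,k_n}$ forces $k_n$ to be eventually constant, say $k$, and the per-$k$ divergence just established forces $k=0$, hence $g=e$; the case where $x$ has finite order is even easier, as then $k_n$ ranges over a finite set and a pigeonhole argument applies. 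In either case $G_x$ is genuinely discrete (one checks $G_x\cap H=\{e\}$ from $G_x\cap S_T=\{e\}$, since $T$ fixes $H$), so it is a legitimate witness and $T\notin\nc$.

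The main obstacle I anticipate is precisely this exponent control, i.e.\ ruling out that $\langle\bar x\rangle$ has compact closure (a torus) in $G/H$, for then $\bar x^{\,k_n}$ could converge along an unbounded sequence of exponents and the elementary argument breaks down. Here I would use that $T$ acts trivially on $H$ and, since $\bar T$ fixes $\bar x$, also trivially on the compact group $\overline{\langle\bar x\rangle}\subset G/H$; the resulting cocycle $g\mapsto g^{-1}T(g)$ into $H$ is then defined on a compact group and takes values in a group with no nontrivial compact subgroup, which should force it to be trivial and hence $T(x)=x$, contradicting $y\neq e$. Making this compactness-versus-no-compact-subgroups step precise is the delicate part of the argument.
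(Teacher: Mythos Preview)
Your treatment of the case where $\langle\bar x\rangle$ is discrete in $G/H$ is correct and matches Step~1 of the paper's proof almost verbatim: the identity $T^n(x^k)=x^k c_k^{\,n}$, the observation that each nontrivial $c_k$ generates a closed infinite cyclic group in $H$ (hence $c_k^{\,n}$ leaves every compact set), and the exponent control via the discrete quotient are exactly the ingredients the paper uses.

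The gap is in the torus case, and it is genuine. Your proposed cocycle argument cannot rule this case out, because it actually occurs. Take $G=\TT\ltimes\R^2$ with the rotation action, $H=\{0\}\times\R^2$, and $T=\inn((0,-w))$ for some $w\neq 0$. Then $T|_H=\Id$, $H$ has no nontrivial compact subgroup, and for $x=(\alpha,0)$ with $\alpha$ irrational one computes $T(x)=x\cdot(0,R_\alpha w-w)$, so $y\neq e$, while $S_T=H$ gives $G_x\cap S_T=\{e\}$. Here $\overline{\langle\bar x\rangle}=\TT$ yet $T(x)\neq x$, so the contradiction you are aiming for does not arise. The flaw in the heuristic is that the map $g\mapsto g^{-1}T(g)$, even when it descends to the compact quotient, has compact \emph{image} in $H$, not a compact \emph{subgroup}; the hypothesis that $H$ has no nontrivial compact subgroup says nothing about compact subsets. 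Equally fatal: in this situation $G_x$ itself is not discrete (in the example it is dense in $\TT\times\{0\}$), so it cannot serve as a witness for $T\notin\nc$ at all. Your parenthetical justification ``$G_x\cap H=\{e\}$, hence $G_x$ discrete'' is valid only under the two sub-cases you explicitly treat and fails precisely here.

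The paper's Step~2 does not attempt to exclude the torus case; it works inside it. After replacing $G$ by the closed subgroup generated by $x$ and $H$, passing to the connected component, and using that $G/H^0$ is then abelian (a cover of a torus, hence $\R^n\times\TT^m$), it locates a maximal compact connected abelian subgroup $C$ with $C\not\subset S_T$ and invokes Lemma~\ref{cpt-roots} (density of prime-order elements in a torus) to find $b\in C\setminus S_T$ of prime order $p$. For this $b$ one has $G_b$ finite (hence discrete), $G_b\cap S_T=\{e\}$, and $G_bH^0/H^0$ discrete, so Step~1 applies with $b$ and $H^0$ in place of $x$ and $H$. The essential idea you are missing is this change of witness: when $G_x$ fails to be discrete, one must manufacture a \emph{different} cyclic subgroup to contract, and the torsion of the torus supplies it.
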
 

\begin{proof}  Let $G$, $T$, $H$, $x$ and $y$ be as above. Suppose $G_x\cap S_T=\{e\}$. Let $G(x,y)$ be the subgroup of $G$ generated 
by $x$ and $y$. Then $G(x,y)$ is countable and $T$-invariant, and its closure $\ol{G(x,y)}$ is a $T$-invariant Lie subgroup, 
which is separable, and hence second countable. In particular, $\ol{G(x,y)}$ has countably many connected components. Replacing $G$ by 
$\ol{G(x,y)}$, $T$ by its restriction to ${\ol{G(x,y)}}$, and $H$ by $\ol{G(x,y)}\cap H$, we may assume that $G$ is second countable and 
$G/H$ is an abelian Lie group which is either discrete or compact. Now $\Sub_G$ is metrizable. We show that $T\not\in\nc$. 
Since $H$ is $T$-invariant and normal in $G$, we get for every $n\in\Z$ that $T(x^n)=x^ny_n$ for some $y_n\in H$, where $y_n\ne e$ 
unless $x^n=e$, as $G_x\cap S_T=\{e\}$.

\noindent{\bf Step 1:}  Suppose $G/H$ is discrete, i.e.\ $G=G_xH$.  We first show that $G_x$ is discrete. 
Observe that if $G_xH/H$ is finite, then for some $n\in\N$, $x^n\in G_x\cap H\subset G_x\cap S_T=\{e\}$, and hence $G_x$ is finite. 
If possible, suppose $G_x$ is not discrete.  Then $G_x$ is infinite and relatively compact. This implies that $G/H=G_xH/H$ is compact, 
and hence it is finite. Then we get as above that $G_x$ is finite, and it leads to a contradiction. 
Therefore, $G_x$ is discrete, and hence, closed. 

There exists an unbounded sequence $\{n_k\}\subset\N$ such that $T^{n_k}(G_x)\to L$ (say). We show that $L=\{e\}$. 
If possible, suppose $a\in L$ is such that $a\ne e$.  Replacing $a$ by $a^{-1}$ if necessary, we get that there exists 
$\{m_k\}\subset\N$ such that $T^{n_k}(x^{m_k})\to a$. If $G_x$ is finite, then passing to a subsequence if necessary, we can 
choose $m_k=m_1$ for all $k$. Suppose $G_x$ is infinite. As observed above, $G/H$ is also infinite. Let $\pi:G\to G/H$ be 
the natural projection. Since $T(x)\in xH$, we have that $\pi(x^{m_k})\to \pi(a)$. As $G/H$ is infinite and discrete, we get that $\{m_k\}$ is eventually constant. In either case, we have that $T^{n_k}(x^m)\to a$ for some $m\in\N$. Then 
$T^{n_k}(x^m)=x^my_m^{n_k}\to a$. Also, since $a\ne e$, we get that $x^m\ne e$ and hence $y_m\ne e$. Therefore, 
$\{y_m^{n_k}\}_{k\in\N}$ is convergent, and hence $y_m$ is contained in a nontrivial compact subgroup in $H$. 
This leads to a contradiction. Therefore, $L=\{e\}$ and $T\not\in\nc$. 

\noindent{\bf Step 2:} Now suppose $G/H$ is not discrete. As $G_xH$ is dense in $G$, we have that $G/H$ is compact and  
$G_x$ is infinite. Since $G/H$ is abelian and $H\subset S_T$, we have that $S_T$ is normal in 
$G$. Moreover, $G/S_T$ is infinite, as $G_x$ is infinite and $G_x\cap S_T=\{e\}$. As $G$ is a Lie group, $G^0$, and hence, 
$G^0H$ is an open subgroup in $G$. Therefore, $G/G^0H$, being compact, is finite.  We may replace $x$ by $x^n$ 
for some $n\in\N$ and assume that $G=G^0H$. Here, $G^0/(G^0\cap H)$ is connected, and it is also compact as it is isomorphic to $G/H$. 
Since $G/S_T$ is infinite and $H\subset S_T$, we get that $G/S_T=(G^0H)/S_T$, and hence $G^0/(G^0\cap S_T)$ is compact and infinite. 

Note that $G^0$ is $T$-invariant, $T$ acts trivially on $H\cap G^0$ and on $G^0/(H\cap G^0)$. Also, $xh\in G^0$ for some 
$h\in H$, $T(xh)\in xH\cap G^0=xh(H\cap G^0)$ and $G_{xh}\cap S_T=\{e\}$. Moreover $G_{xh}(H\cap G^0)$ is dense in $G^0$ and
$G_{xh}$ is infinite, as $G_{xh}H=G_xH$. 
We may replace $x$ by $xh$ and assume that $x\in G^0$, and we may also replace $G$, $H$ by $G^0$, $H\cap G^0$ 
respectively, and assume $G$ is a connected Lie group, $G/H$ is a compact connected abelian Lie group and $G/S_T$ is infinite and compact. 
Note that $T(g)\in gH$ for all $g\in G$. Let $Q=\{g^{-1}T(g)\mid g\in G\}$. Then $e\in Q\subset H$ and, $Q$ is connected as $G$ is so. 
Therefore, $Q\subset H^0$, and hence $T(g)\in gH^0$ for all $g\in G$. Here, $H/H^0$, being a discrete normal subgroup of $G/H^0$, 
is central in $G/H^0$. Hence, $G/H^0$ is a covering group of $G/H$, and the later is abelian. Therefore, $G/H^0$ is abelian and, 
being connected, it is isomorphic to $\R^n\times \TT^m$, for $n,m\geq 0$ and $m+n\in\N$. Let $\pi_0: G\to G/H^0$ be the natural projection. 

Suppose $m\in\N$. Let $M=\pi_0^{-1}(\TT^m)$. Then $M$ is a closed connected $T$-invariant Lie subgroup and $H^0\subset M$. 
By Theorem 3.7 in Ch.\ XV of \cite{Ho}, $M=CH^0$, where $C$ is a maximal compact connected subgroup of $G$. Now 
$M=C\ltimes H^0$, since $H^0$ does not have any nontrivial compact subgroup. Observe that $C$ is isomorphic to $\pi_0(C)$, 
and hence it is nontrivial and abelian. Suppose $M\not\subset S_T$, i.e.\ $C\cap S_T\subsetneq C$. Note that $C\cap S_T$ is a 
closed (abelian) subgroup of $C$. 

As $C$ is a compact connected abelian Lie group and the set of primes is an unbounded set in $\N$, by Lemma \ref{cpt-roots}, 
the set of elements of prime orders in $C$ is dense in $C$. As $C\setminus (C\cap S_T)$ is open in $C$, there exists 
$b\in C\setminus (C\cap S_T)$ such that $b^p=e$ for some prime $p$. Since $C/(C\cap S_T)$ is also a compact connected 
abelian Lie group, we have that $b^n\in C\setminus (C\cap S_T)$ for all $n<p$ and $T(b^n)=b^nh_n$ for some $h_n\in H$ and 
$h_n\ne e$, $1\leq n<p$. Let $G_b$ be the cyclic subgroup generated by $b$. Then $G_b\cap S_T=\{e\}$. Note that $\pi_0(G_b)$ 
is finite and discrete in $M/H^0$, and hence in $G/H^0$. Now arguing as in Step 1 for $b$, $H^0$ instead of $x$, $H$ respectively, 
we get that $T\not\in\nc$. 

Now suppose $m=0$ or $C\subset S_T$. Then $G/(CH^0)=\R^n$, where $n\in\N$ and, either $C=\{e\}$ or $C\subset S_T$. 
As $H\subset S_T\subsetneq G$ and $G/S_T$, being a compact connected abelian Lie group, is monothetic, 
there exists $x_1\in G\mi S_T$ such that its image in $G/S_T$ generates a (dense) infinite group. Hence 
$G_{x_1}\cap S_T=\{e\}$, where $G_{x_1}$ is the group generated by $x_1$ in $G$. Moreover $\pi_0(x_1)$ generates a discrete 
infinite group in $G/CH^0$, and hence in $G/H^0$. Arguing as in Step 1, for $x_1$, $H^0$ instead of $x$, $H$ respectively, 
we get that $T\not\in\nc$.
\end{proof}

\section{Characterisation of automorphisms of Lie groups $G$ which act distally on {\rm Sub}$_G$}

In this section, we state and prove some of the main results characterising the class of automorphisms which belong to (NC) or act distally on Sub$^a_G$ (see 
Theorems \ref{main}, \ref{unip} and \ref{aut0}). We also prove Proposition \ref{solv} for connected solvable Lie groups which will be 
useful in proving these theorems. After proving Theorems \ref{main}, we prove Theorems \ref{unip} 
and \ref{aut0} which show for all connected Lie groups $G$ that if $T$ is unipotent or if $T\in (\Aut(G))^0$, 
then the following holds: $T$ acts distally on $\Sub^a_G$ if and only if $T$ is contained in a compact subgroup of $\Aut(G)$. 
We also characterise connected Lie groups $G$ which act distally on $\Sub^a_G$ (see Corollary \ref{pt-distal}). 

\begin{thm} \label{main}
Let $G$ be a connected Lie group without any compact central subgroup of positive dimension and let $T\in\Aut(G)$. 
Then the following statements are equivalent.
\begin{enumerate}
\item[$(i)$] $T\in\nc$. 
\item[$(ii)$] $T$ acts distally on $\Sub^a_G$.
\item[$(iii)$] $T$ acts distally on $\Sub_G$.
\item[$(iv)$] $T$ is contained in a compact subgroup of $\Aut(G)$. 
\end{enumerate}
\end{thm}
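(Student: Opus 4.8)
The plan is to prove the chain $(iv)\Rightarrow(iii)\Rightarrow(ii)\Rightarrow(i)$ first, since these are comparatively soft, and then to close the loop with the substantive implication $(i)\Rightarrow(iv)$. For $(iv)\Rightarrow(iii)$: if $T$ lies in a compact subgroup $\K$ of $\Aut(G)$, then by Lemma~\ref{mco} the group $\K$ acts continuously on the compact metric space $\Sub_G$; a continuous action of a compact group on a compact metric space is equicontinuous, hence distal, so in particular the cyclic action of $T$ is distal. For $(iii)\Rightarrow(ii)$: $\Sub^a_G$ is a closed $\Aut(G)$-invariant subset of $\Sub_G$, so distality on $\Sub_G$ restricts to distality on $\Sub^a_G$. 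For $(ii)\Rightarrow(i)$: any nontrivial discrete cyclic $A$ lies in $\Sub^a_G$, and if $T^{n_k}(A)\to\{e\}$ for some $n_k\to\pm\infty$, then the two distinct points $A$ and $\{e\}$ of $\Sub^a_G$ have orbits both accumulating at the fixed point $\{e\}$, contradicting distality; hence $T\in\nc$.

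For the hard direction $(i)\Rightarrow(iv)$, I would first reduce to a statement about $\du T$. Since $G$ has no compact central subgroup of positive dimension, the largest compact connected central subgroup $C$ is trivial, so the hypothesis ``$T$ acts distally on $C$'' in Corollary~\ref{Lie-distal}(a) is vacuous; thus $T\in\nc$ already forces $T$ to be distal on $G$. By the contraction-group characterisation of distality (both $C(T)$ and $C(T^{-1})$ trivial, these being closed by Lemma~\ref{central-cpt}(2)), distality of $T$ is equivalent to all eigenvalues of $\du T$ having modulus one. On the other hand, since $\Aut(G)$ embeds as a closed subgroup of $\GL(\G)$ with matching topology, $T$ lies in a compact subgroup of $\Aut(G)$ if and only if $\overline{\langle \du T\rangle}$ is compact in $\GL(\G)$, i.e.\ if and only if $\du T$ is semisimple with unit-modulus eigenvalues. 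Granting distality, the only remaining obstruction to $(iv)$ is a nontrivial unipotent part of $\du T$. Hence $(i)\Rightarrow(iv)$ reduces to the assertion: if $\du T$ has a nontrivial unipotent part, then $T\notin\nc$.

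The engine for this last step is Lemma~\ref{discrete}, so the plan is to manufacture its hypotheses. Concretely, I would exhibit a closed $T$-invariant subgroup $B\le G$ together with a closed normal $T$-invariant vector subgroup $H\le B$ (so $H$ has no nontrivial compact subgroup) on which $T$ acts trivially, and an element $x\in B$ with $T(x)=xy$ for some $y\in H\setminus\{e\}$ and $G_x\cap S_T=\{e\}$; Lemma~\ref{discrete} then yields $T|_B\notin\nc$, and since $B$ is closed this forces $T\notin\nc$, the desired contradiction. To locate such data from the nontrivial unipotent part $\nu=\exp(D)$ (with $D\ne 0$ a nilpotent derivation of $\G$), I would split into the solvable and the semisimple directions. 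Along a $T$-invariant nilpotent ideal, which is simply connected precisely because $G$ has no compact central subgroup of positive dimension, Proposition~\ref{kolchin} supplies a $T$-invariant flag by vector groups realising the shear, giving $H$ and $x$ directly once the elliptic part is neutralised. Along a semisimple direction, an inner unipotent automorphism $\Ad(u)$ is handled inside a Borel-type subgroup $B$, where the unipotent radical $H$ is a normal vector subgroup fixed by $\Ad(u)$ while a transverse torus element $x$ satisfies $T(x)=xy$ with $y\in H\setminus\{e\}$. Nontrivial elliptic eigenvalues (roots of unity, handled by passing to a power since $\nc$ is preserved under powers, or irrational rotations) are stripped using Lemma~\ref{cpt-star}: writing $T$ locally as a commuting product of its relatively compact elliptic part and its unipotent part reduces membership in $\nc$ to the purely unipotent factor.

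The main obstacle I anticipate is precisely this uniform construction of the triple $(B,H,x)$ feeding Lemma~\ref{discrete}: one must show that a nontrivial unipotent part of $\du T$ always manifests as a genuine shear $T(x)=xy$ with $y$ in a normal vector subgroup fixed by $T$, while simultaneously controlling the coexisting elliptic rotation (via Lemma~\ref{cpt-star}) and ensuring the relevant fixed subgroup $H$ contains no compact subgroup. The last point is exactly where the hypothesis on the center is indispensable, since it guarantees that the ambient nilpotent pieces are simply connected, so that the graded vector quotients produced by Proposition~\ref{kolchin} are honest vector groups; without it, toral fixed directions would obstruct the application of Lemma~\ref{discrete}, as the $\TT^n$ examples in Remark~\ref{rem1} illustrate.
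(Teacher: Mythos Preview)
Your soft chain $(iv)\Rightarrow(iii)\Rightarrow(ii)\Rightarrow(i)$ is fine and matches the paper. Your reduction of $(i)\Rightarrow(iv)$ to ``nontrivial unipotent part $\Rightarrow T\notin\nc$'' via Corollary~\ref{Lie-distal}(a) and Lemma~\ref{cpt-star} is also the paper's strategy. However, there are three genuine gaps in how you propose to execute it.

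First, writing $T=T_sT_u$ with $T_s$ relatively compact and $T_u$ unipotent \emph{inside $\Aut(G)$} is not free: the Jordan components of $\du T$ lie in $\GL(\G)$, but you need them to come from automorphisms of $G$. The paper invokes Dani's theorem that $\Aut(G)$ is almost algebraic when $G$ has no compact central subgroup of positive dimension --- this is exactly where the hypothesis on the center is first used, and without it the decomposition need not stay in $\Aut(G)$. Second, your semisimple-direction sketch asserts that the unipotent radical $H$ of a Borel is ``fixed by $\Ad(u)$''; this is false unless $u$ is central in $N$ (already for $\SL(3,\R)$ with a generic $u\in N$). The paper does not feed Lemma~\ref{discrete} a torus element directly. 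Instead it first proves the solvable case as a standalone result (Proposition~\ref{solv}\,(ii)), applies it to the \emph{solvable preimage} $\eta^{-1}(AN)\subset G$ (which inherits both the $\nc$ hypothesis and the absence of a compact central torus), concludes $T_t|_{\eta^{-1}(AN)}=\Id$, and then uses the Lie-algebraic fact that the centraliser of $\mathcal A$ in $\mathcal A\oplus\mathcal N$ is $\mathcal A$ to force $u_t=e$. Third, even once $T|_R=\Id$ and $T$ is trivial on $G/R$, you are not done: $T$ can still be a nontrivial shear $g\mapsto g\phi(g)$ with $\phi(g)\in R$ supported on the Levi factor. The paper spends two further steps on this: conjugating $AN$ over $G_s$ to cover all non-compact simple factors, and then handling the compact simple factors $S_c$ separately by showing $T_t(x)\in xN$ for $x\in S_c$ and applying Lemma~\ref{discrete} on maximal tori of $S_c$. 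Your plan does not account for this mixed shear at all.
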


Note that Theorem \ref{main} does not hold for all connected Lie groups and the condition in the hypothesis is necessary as 
illustrated by the example in Remark \ref{rem1} of the group $G=\TT^n$, $n\geq 2$, for which every $T\in\Aut(G)\cong\GL(n,\Z)$ 
belongs to $\nc$. However, it admits many distal (unipotent) and non-distal automorphisms, each of which generates a discrete infinite 
group in $\Aut(G)$. 

Before proving the theorem, we prove a somewhat stronger result about distality for unipotent automorphisms on connected solvable Lie groups. 
Note that Proposition \ref{solv}\,$(i)$ below will later be generalised to all connected Lie groups, see Theorem \ref{unip}.

\begin{prop} \label {solv}
Let $G$ be a connected solvable Lie group and let $T\in\Aut(G)$ be unipotent.  Then the following hold:
\begin{enumerate}
\item[$(i)$] If $T$ acts distally on $\Sub^a_G$, then $T=\Id$. 
\item[$(ii)$] If $G$ does not have a compact central subgroup of positive dimension and $T\in\nc$, then $T=\Id$. 
\end{enumerate}
\end{prop}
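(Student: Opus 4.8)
The plan is to prove each part in contrapositive form: assuming $T\neq\Id$, I will either build a configuration to which Lemma~\ref{discrete} applies (for (ii)), or exhibit two closed abelian subgroups with a common orbit-limit (for (i)), contradicting $T\in\nc$, respectively the distality of $T$ on $\Sub^a_G$. I pass to the Lie algebra $\mathfrak{g}$ and write $dT=\Id+N$; since $T$ is unipotent, $N$ is nilpotent, and $N\neq 0$ precisely because $T\neq\Id$. Then $\mathfrak{g}_1=\ker N=\operatorname{Lie}(S_T)$ is a nonzero proper subalgebra, where $S_T$ is the stabiliser of Lemma~\ref{discrete}.

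The heart of the argument is to produce a nonzero closed connected normal $T$-invariant subgroup $H$ with $T|_H=\Id$ and no nontrivial compact subgroup. I work inside the nilradical, with Lie algebra $\mathfrak{n}$, a characteristic (hence $dT$-invariant) nilpotent ideal. Its centre $\mathfrak{z}(\mathfrak{n})$ is a characteristic abelian ideal on which $\mathfrak{n}$ acts trivially, so the adjoint action of $\mathfrak{g}$ on $\mathfrak{z}(\mathfrak{n})$ factors through the abelian quotient $\mathfrak{q}=\mathfrak{g}/\mathfrak{n}$. Decomposing $\mathfrak{z}(\mathfrak{n})$ into generalised weight spaces for the commuting family $\mathfrak{q}$, and using that the unipotent action induced by $T$ on $\mathfrak{q}$ permutes the finitely many weights and therefore fixes each of them (a unipotent linear map with a finite orbit fixes it pointwise), I obtain a $dT$-invariant weight space; a unipotent operator has a nonzero fixed vector, which yields a nonzero $\mathfrak{q}$-invariant, $dT$-fixed subspace $\mathfrak{a}\subseteq\mathfrak{z}(\mathfrak{n})$, i.e.\ a nonzero ideal of $\mathfrak{g}$ fixed pointwise by $dT$. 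Taking $H$ to be the closure of $\exp\mathfrak{a}$ gives a closed connected normal $T$-invariant abelian group with $T|_H=\Id$; its maximal compact subgroup is a connected torus, which, being compact, connected and normal in the connected group $G$, is central, hence trivial under the hypothesis of (ii). Thus $H$ is a vector group with no nontrivial compact subgroup. This weight-space computation is essential, since the $dT$-fixed subspace of a characteristic ideal need not itself be an ideal.

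For the element $x$, after possibly replacing $\mathfrak{a}$ by a fixed ideal meeting $\operatorname{im}N$ (splitting into the cases $N|_{\mathfrak{n}}\neq0$ and $N|_{\mathfrak{n}}=0$, the latter forcing a nontrivial action on $\mathfrak{q}$ and handled on that quotient), I choose $X'\in\mathfrak{n}$ with $NX'\in\mathfrak{a}\setminus\{0\}$; since $\mathfrak{a}\subseteq\mathfrak{z}(\mathfrak{n})$ commutes with $X'$, the group relation collapses to $T(\exp X')=\exp X'\cdot\exp(NX')$, so $T(x)=xy$ with $x=\exp X'$ and $y=\exp(NX')\in\exp\mathfrak{a}\subseteq H$, $y\neq e$. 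Using that $H$ is torsion-free and passing to a suitable power of $x$, I arrange $G_x\cap S_T=\{e\}$. Lemma~\ref{discrete} then gives $T\notin\nc$, a contradiction, so $T=\Id$ and (ii) follows. For (i), distality of $T$ on $\Sub^a_G$ at once gives $T\in\nc$ (a contracted nontrivial cyclic subgroup would violate distality against the fixed point $\{e\}$), so if $G$ has no compact central subgroup of positive dimension, (ii) finishes. In general let $C$ be the maximal compact central subgroup; by Lemma~\ref{abelian} the induced $\bar T$ on $G/C$ lies in $\nc$, and $G/C$ has no compact central subgroup of positive dimension, so $\bar T=\Id$ by (ii). Distality on $\Sub^a_C$ forces $T|_C=\Id$ (otherwise a closed one-parameter subgroup and all of $C$ share the limit $C$, exactly as in the computation behind Example~\ref{counter}); then $g\mapsto g^{-1}T(g)$ is a continuous homomorphism of $G$ into the torus $C$, and if it is nontrivial one picks $x$ with $T(x)=xc$, $c\in C\setminus\{e\}$, and compares the $T$-orbits of two distinct closed abelian subgroups built from $x$ and $\overline{\langle c\rangle}$ to produce a common limit, contradicting distality. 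Hence the homomorphism is trivial and $T=\Id$.

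The main obstacle is the construction of $H$: because the $T$-fixed part of a characteristic ideal is not in general an ideal, one cannot simply take fixed points, and the weight-space decomposition of $\mathfrak{z}(\mathfrak{n})$ over the abelian quotient $\mathfrak{g}/\mathfrak{n}$ — together with the fact that the induced unipotent map fixes every weight — is what repairs this. The two remaining technical points are the transversality $G_x\cap S_T=\{e\}$ (handled via torsion-freeness of $H$ and passage to powers of $x$) and, for (i), the elimination of the residual homomorphism into the central torus $C$, where the full strength of distality on $\Sub^a_G$, rather than mere membership in $\nc$, is genuinely needed.
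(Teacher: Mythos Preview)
Your strategy for $(ii)$—produce a closed connected normal $T$-invariant $H$ with $T|_H=\Id$ and no compact subgroups, then apply Lemma~\ref{discrete}—is also the paper's, but your construction of $H$ has a gap. You take $\mathfrak a$ to be the $dT$-fixed part of a generalised $\mathfrak q$-weight space $V_\lambda\subset\mathfrak z(\mathfrak n)$ and claim it is $\mathfrak q$-invariant, hence an ideal. But for $v\in V_\lambda^{dT}$ and $X\in\mathfrak g$ one has $dT([X,v])=[dT(X),v]$, and since the bracket with $v\in\mathfrak z(\mathfrak n)$ depends only on the image of $X$ in $\mathfrak q$, this equals $[X,v]$ only when $dT$ fixes that image. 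So $V_\lambda^{dT}$ is an ideal only once you know $dT|_{\mathfrak q}=\Id$; your weight-permutation argument yields merely $\lambda\circ dT|_{\mathfrak q}=\lambda$ for the weights occurring in $\mathfrak z(\mathfrak n)$, and these need not span $\mathfrak q^*$. The paper supplies exactly this missing fact by a device you omit: under hypothesis $(ii)$, $\Aut(G)$ is almost algebraic, $T$ embeds in a unipotent one-parameter group $\{T_t\}$, and in the solvable group $G'=\{T_t\}\ltimes G$ the commutator subgroup is nilpotent and contained in $N$, forcing each $T_t$ to act trivially on $G/N$. (There is a purely Lie-algebraic repair—run your permutation argument with the weights of $\mathfrak q$ on all of $\mathfrak n$, which do span $\mathfrak q^*$ by maximality of the nilradical and Engel's theorem—but you do not make it.) The same missing fact undercuts your case $N|_{\mathfrak n}=0$: ``handled on that quotient'' cannot mean failure of $\nc$ on $G/N$, since that does not pull back; what you need is $T(x)\in xN$ so that Lemma~\ref{discrete} applies with $H=N$, and that is again triviality on $G/N$.

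Even granting $dT|_{\mathfrak q}=\Id$, your choice of $X'\in\mathfrak n$ with $NX'\in\mathfrak a\setminus\{0\}$ is not justified: nothing forces $N(\mathfrak n)$ to meet a prescribed $dT$-fixed piece of $\mathfrak z(\mathfrak n)$. The paper sidesteps this with Proposition~\ref{kolchin} applied to $N$, which manufactures $G_1\subset G_2$ normal in $N$ with $T$ trivial on $G_1$ and on $G_2/G_1$ but not on $G_2$, and then runs Lemma~\ref{discrete} inside $G_2$. Your treatment of $(i)$ inherits the gap in $(ii)$, and further dispatches both $T|_C=\Id$ and the residual homomorphism $g\mapsto g^{-1}T(g)$ into the central torus $C$ in one line each; the paper's Steps~1 and~2 carry these out by tracking explicit one-parameter subgroups and their Chabauty limits, and the arguments are substantially longer than your sketch indicates.
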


\begin{proof} {\bf Step 1:} Let $G$ be as in the hypothesis. Suppose $T\in\Aut(G)$ acts distally on $\Sub^a_G$. 
Let $K$ be the largest compact connected central subgroup 
of $G$. Then $K$ is characteristic, and in particular, it is $T$-invariant and normal in $G$. If possible, suppose $T|_K\ne\Id$. 
By Proposition \ref{kolchin}, there exist $T$-invariant compact connected (normal) nontrivial subgroups $K_1\subset K_2$ such that 
$T$ acts trivially on $K_1$ and on $K_2/K_1$ but it does not act trivially on $K_2$. We may, without loss of any generality, assume 
that $K_1$ is the largest connected subgroup in $K_2$ such that $T|_{K_1}=\Id$. As $K_1$ and $K_2$ are compact connected 
abelian Lie groups, and $\dim K_1<\dim K_2$, we can get a nontrivial compact connected subgroup $M\subset K_2$ such that 
$M\cap K_1=\{e\}$. As $M\subset K_2$, for every $x\in M$, $T(x)=xy$, for some $y\in K_1$ which depends on $x$. As $M$ is 
monothetic, we can choose $g\in M$, such that $g$ generates a dense subgroup in $M$. Let $h\in K_1$ be such that $T(g)=gh$. 

We now show that $h$ has infinite order. If possible, suppose $h$ has finite order $m$ (say). Then $T(g^{mn})=g^{mn}$, $n\in\N$, and 
as $\{g^{mn}\mid n\in\N\}$ is dense in $M$, we get that $T$ acts trivially on $M$. This leads to a contradiction to our assumption on 
$K_1$ as $K_1\ne MK_1\subset K_2$. As $K_1$ is compact, there exists an unbounded sequence $\{n_k\}\subset\N$ such that 
$h^{n_k}\to h$. Passing to a subsequence if necessary, we may assume that $T^{n_k}(M)\to L$ for some compact group 
$L\subset K_2$. As $\{g^n\}_{n\in\N}$ is dense in $M$ and $T(g)=gh$, we have that $T^{n_k}(M)\subset MK_h$, where $K_h$ is 
the closed subgroup generated by $h$ in $K_1$. 
As $T^{n_k}(g)=gh^{n_k}\to gh$, we get that $gh\in L$. As $M$ is a connected abelian Lie group, there exists a continuous real 
one-parameter group $\{g(t)\}_{t\in\R}\subset M$ such that $g(0)=e$ and $g(1)=g$. Let $x_k=g(1/n_k)\in M$, $k\in\N$. We have that 
$x_k\to e$ and $x_k^{n_k}=g$. Now 
$$
gh=T(g)=T(x_k)^{n_k}=(x_ky_k)^{n_k}=gy_k^{n_k}\mbox{ for some }y_k\in K_1.$$ 
In particular, $h=y_k^{n_k}$ for all $k$, and it implies that $T^{n_k}(gx_k^{-1})=gh^{n_k}h^{-1}x_k^{-1}\to g$. Therefore, 
$g\in L$ and hence $h\in L$.  As $L$ is a closed subgroup, we have that $M\subset L$ and $K_h\subset L$. 
Since $T^{n_k}(M)\subset MK_h$ for all $k$, we have that $L=MK_h$. As $T(MK_h)=MK_h$, this contradicts the hypothesis 
that $T$ acts distally on $\Sub^a_G$. Therefore, $T|_K=\Id$. If $G$ is compact, then $G=K$ and $T=\Id$. 

\noindent{\bf Step 2:} Suppose $G$ is not compact. Let $\pi:G\to G/K$ be the natural projection and let $\T$ be the automorphism 
of $G/K$ corresponding to $T$. Then $\T$ is also unipotent, and by Lemma \ref{abelian}, $\T\in\nc$ in $\Aut(G/K)$. Suppose $(ii)$ 
holds. Since $G/K$ does not have any compact central subgroup of positive dimension, the statement in $(ii)$ implies that $\T=\Id$. 
Now we have that $T|_K=\Id$ and $T(xK)=xK$ for all $x\in G$. Let $N$ be the nilradical of $G$, then $K\subset N$, $N/K$ is simply 
connected and $G/N$ is abelian and isomorphic to $\R^n\times \TT^m$, where $m,n\geq 0$. Let $R$ be the closed connected 
(solvable) normal subgroup containing $N$, such that $R/N=\R^n$, $K\subset N\subset R$ and $R/K$ is a closed connected solvable normal 
subgroup of $G/K$ without any nontrivial compact subgroup. As $G/R=\TT^m$ is compact, we get by Theorem 3.7 in Ch.\ XV of 
\cite{Ho}, that $G=CR$, where $C$ is a maximal compact connected subgroup of $G$ which is abelian. As $K\subset C$, we have 
$T(C)=C$. Here, $T|_C$ is also unipotent and from Step 1, we get that $T|_C=\Id$. 

Now it is enough to show that $T|_R=\Id$. For every $x\in R$, $T(x)\in xK$. There exists a neighbourhood $U$ of the identity $e$ in $G$ 
such that every $x\in U$ is exponential. If $T(x)=x$ for every $x\in U\cap R$, then $T|_R=\Id$, since $U\cap R$ generates $R$. 
If possible, suppose $x\in U\cap R$ is such that $x\not\in K$ and $T(x)\ne x$. Since $x$ is exponential, it can be embedded in a continuous 
real one-parameter subgroup $\{x(t)\}_{t\in\R}$ as $x=x(1)$. We know that $\{x(t)\}_{t\in\R}$ is unbounded (and closed) as $R/K$ has 
no nontrivial compact subgroups. Note that $T(x(t))=x(t)k(t)$, $k(t)\in K$ for all $t\in\R$ and $k(1)=k\ne e$. Let 
$M'=\ol{\{k(t)\mid t\in\R\}}$. It is a connected compact subgroup of $K$. Let $C_x=\{x(t)\}_{t\in\R}$. It is a closed connected 
abelian subgroup of $R$. For some $t_0\in\R$, $k(t_0)$ has infinite order, and hence, there exists an unbounded 
sequence $\{n_m\}\subset\N$ such that $k(t_0)^{n_m}\to k(t_0)$. Passing to a subsequence, we get that $\{k(t_0/l!)^{n_m}\}$ 
converges for every $l\in\N$. As $\Sub^a_G$ is compact, passing to a subsequence if necessary, we get that $\{T^{n_m}(C_x)\}$ 
converges to a closed abelian subgroup $H$ (say). Now $T^{n_m}(x(t_0))=x(t_0)k(t_0)^{n_m}\to x(t_0)k(t_0)$. Therefore,  
 $x(t_0)k(t_0)\in H$. Also, $T^n(x(t/n))=x(t/n)k(t)$, $n\in\N$. Therefore, $T^{n_m}(x(t/n_m))=x(t/n_m)k(t)\to k(t)\in H$ for all $t$, 
 i.e.\ $M'\subset H$ and hence $x(t_0)\in H$. Moreover, $T^{n_m}(x(t_0/l!))=x(t_0/l!)k(t_0/l!)^{n_m}\to x(t_0/l!)h_l$ for some 
 $h_l\in M'$ such that $k(t_0/l!)^{n_m}\to h_l$ for all $l\in\N$. Hence $x(t_0/l!)\in H$ for all $l\in\N$. As $H$ is a closed subgroup, 
 we get that $C_x\subset H$, and hence that $C_xM'\subset H$. Since $C_xM'$ is $T$-invariant, we get by Lemma \ref{H_n}\,(3) that 
 $H=C_xM'$. As $C_x\ne C_xM'$, it contradicts the hypothesis that $T$ acts distally on $\Sub^a_G$. Therefore, $T(x)=x$ for every 
 $x\in U\cap R$ and hence, $T|_R=\Id$. This implies that $T=\Id$ assuming that $(ii)$ holds. Therefore, $(i)$ holds if $(ii)$ holds. 
 
 We now prove $(ii)$. Suppose $G$ has no compact central subgroup of positive dimension. Then its nilradical $N$ is simply 
 connected. Moreover, $\Aut(G)$ is almost algebraic \cite[Theorem 1]{D}, i.e.\ a closed subgroup of a finite index in an algebraic 
 subgroup of $\GL(\G)$, where $\G$ is the Lie algebra of $G$. There exists a continuous unipotent one-parameter subgroup  
 $\{T_t\}_{t\in\R}\subset\Aut(G)$ such that $T=T_1$. Let $G'=\{T_t\}_{t\in\R}\ltimes G$. Then $G'$ is a connected solvable Lie group. 
 By Proposition 3.11 of \cite{Rag}, the commutator subgroup of $G'$ is nilpotent and, as it is contained in $G$, it is contained in $N$. 
 This implies that each $T_t$ acts trivially on $G/N$. 
 
 By Proposition \ref{kolchin}, either $T|_N=\Id$ or there exists a sequence of closed connected normal $T$-invariant subgroups of $N$; 
 $\{e\}=G_0\subset\cdots \subset G_m=N$, $m\geq 2$, such that $T$ acts trivially on $G_k/G_{k-1}$, $1\le k\le m$, and $T$ does not act 
 trivially on $G_{k+1}/G_{k-1}$, $1\le k\le m-1$. 
 
 Suppose $T\in\nc$. If possible, suppose $T|_N\ne \Id$. Note that $G_1\subset G_2\cap S_T\subsetneq G_2$, where $S_T$ 
 is the stabiliser of $T$. There exists $x\in G_2\mi (G_2\cap S_T)$ such that the image of $x$ in $G_2/(G_2\cap S_T)$ 
 generates an infinite group; this is true since $G_2/G_1$ is (simply) connected and nilpotent. Now $G_x\cap S_T=\{e\}$, 
 where $G_x$ is the cyclic subgroup generated by $x$. Moreover, $T(x)=xy$, $y\ne e$ and $y\in G_1$. As $N$ is simply connected 
 and nilpotent, $G_1$ does not contain any nontrivial compact subgroup. By Lemma \ref{discrete}, it follows that 
 $T|_{G_2}\notin\nc$, and hence $T|_N\not\in\nc$, which contradicts the hypothesis. Hence $T|_N=\Id$. If $G$ is nilpotent, then $T=\Id$.
 Now suppose $G$ is not nilpotent.
 
As observed above, $T$ acts trivially on $G/N$. If possible, suppose $T\ne\Id$. Then $N\subset S_T\ne G$ and, as 
$[G,G]\subset N$  \cite[Proposition 3.11]{Rag}, , we get that $S_T$ is a proper closed normal subgroup of $G$ and 
$G/S_T$ is a nontrivial connected abelian group. There exists $x\in G\mi S_T$ such that the image of $x$ in $G/S_T$ generates 
an infinite group. In particular, $G_x\cap S_T=\{e\}$, where $G_x$ is the cyclic group generated by $x$. As $T$ acts trivially on $G/N$, 
$T(x)=xg$ for some $g\in N$ and $g\ne e$. Since $N$ is simply connected and nilpotent, it has no nontrivial compact subgroup. 
Since $T|_N=\Id$, we get by Lemma \ref{discrete} that $T\not\in\nc$.  This contradicts the hypothesis, and hence $T=\Id$. \end{proof}

\begin{proof} [Proof of Theorem \ref{main}.] 
Let $G$ be a connected Lie group such that it does not admit any nontrivial compact connected central subgroup and let $T\in \Aut(G)$. 
It is easy to see that $(iv)\Rightarrow (iii)\Rightarrow (ii)\Rightarrow (i)$. Now suppose $(i)$ holds, i.e.\ $T\in\nc$. To show the equivalence 
of $(i)-(iv)$, it is enough to show that $(iv)$ holds; i.e.\ we need to show that $T$ is contained in a compact subgroup of $\Aut(G)$.  

\noindent{\bf Step 1:} Let $\G$ be the Lie algebra of $G$. For any $\tau\in\Aut(G)$, let 
$\du \tau$ be the corresponding Lie algebra automorphism of $\G$. Then we can view $\Aut(G)$ as a closed subgroup of 
$\GL(\G)$. By Corollary \ref{Lie-distal}\,$(a)$, we have that $T$ acts distally on $G$. By Theorem 1.1 of \cite{A2}, the Lie algebra 
automorphism $\du T$ on the Lie algebra $\G$ of $G$ corresponding to $T$ is distal and hence, all the eigenvalues of $\du T$ 
have absolute value 1 \cite[Theorem $1'$]{A1}. Since $G$ has no compact central subgroup of positive dimension, by 
Theorem 1 of \cite{D} (see also \cite{PW}) $\Aut(G)$ is an almost algebraic subgroup of $\GL(\G)$, i.e.\ a closed subgroup 
of finite index in an algebraic group. Let $H$ be the smallest almost algebraic subgroup containing $T$ in $\Aut(G)$. Then 
$H$ is abelian and it acts distally on $G$ \cite[Corollary 2.3]{A1}, and $H=\K\times\U$, where $\K$ is a compact abelian 
group and $\U$ is unipotent and abelian, and in fact, a unipotent one-parameter group $\{T_t\}_{t\in\R}$ \cite[Corollary 2.5]{A1}. 
Note that all the eigenvalues of each $T_t$ are equal to 1. Then each $T_t$ acts distally on $G$ \cite[Theorem $1'$]{A1}.
Let $T=T_sT_u=T_uT_s$, where $T_s\in \K$ and $T_u=T_{t_0}\in\U$, for some $t_0\in\R$, $t_0\ne 0$. By Lemma \ref{cpt-star}, 
$T_u$, and hence each $T_t$, $t\in\R$, belongs to $\nc$. Now to show that $T$ is contained in a compact 
subgroup of $\Aut(G)$, it enough to show that $T_u=\Id$. 

\noindent{\bf Step 2:} Let $R$ be the solvable radical of $G$. Then each $T_t|_R$ is unipotent and it belongs to $\nc$. Since the largest
 compact connected central subgroup of $R$, being characteristic in $G$, is central in $G$ by Theorem 4 of \cite{I}, and hence it is trivial.
 Therefore, $R$ does not have any compact central subgroup of positive dimension. By Proposition \ref{solv}, $T_t|_R=\Id$. 
 If $G$ is solvable then $T_t=\Id$ for each $t$. 

Suppose $G$ is not solvable. We show that $T_t$ acts trivially on $G/R$. Let $G_s=G/R$. Then  $G_s$ is a connected semisimple 
Lie group. Moreover, $\Inn(G_s)$ is the connected component of the identity in $\Aut(G_s)$ and it is a subgroup of finite index in 
$\Aut(G_s)$. Let $\eta:G\to G_s$ be the natural projection and let $T'_t$ be the automorphism of $G_s$ corresponding to $T_t$ 
for each $t$. We now show that each $T'_t$ is trivial. Since $\{T'_t\}$ is unipotent and connected, there exists a $\Ad$-unipotent 
one-parameter subgroup $\{u_t\}$ in $G_s$ such that $T'_t=\inn (u_t)$, the inner automorphism by $u_t$ in $G_s$. Suppose 
$\{u_t\}$ is nontrivial.

Let $U$ be a maximal connected $\Ad$-unipotent subgroup of $G_s$ containing $\{u_t\}$, i.e.\ a maximal 
connected nilpotent subgroup containing $\{u_t\}$ such that for all $u\in U$, $\Ad(u)$ is a unipotent linear transformation of the Lie 
algebra of $G_s$. Let $G_s=KAN$ be an Iwasawa decomposition, where $A$ is a closed abelian subgroup, $N$ is a maximal 
connected $\Ad$-unipotent subgroup of $G_s$ which is normalised by $A$, and $AN$ is a closed solvable subgroup of $G_s$ 
(see the proof of Theorem 6.46 in \cite{Kn}). 
Since any two maximal connected Ad-unipotent subgroups are conjugate to each other, we get that $N$ and $U$ are conjugate 
by an element in a compact set contained in $K$ (see Lemma 4.4 in \cite{DS}). If necessary, we may replace $AN$ by its 
conjugate and assume that $U=N$, i.e.\ $u_t\in N$ for all $t\in\R$. Then $AN$ is $T'_t$-invariant. 

Let $B=\eta^{-1}(AN)$, where $\eta:G\to G/R$ is as above for the radical $R$ of $G$. Then $B$ is a closed connected solvable 
subgroup of $G$. Note that since $AN$ is $T'_t$-invariant, we have that $B$ is $T_t$-invariant for each $t$. 
Since $A$ and $N$ are simply connected and $R$ has no compact central subgroup of positive dimension, we get that $B$ 
has no compact central subgroup of positive dimension. As each $T_t|_{B}$ belongs to $\nc$, by Proposition \ref{solv}, 
$T_t|_{B}=\Id$ for each $t$. This implies that $u_t$ centralises $AN$ for all $t\in\R$. Let ${\mathcal G}_s$ be the Lie algebra of 
$G_s$ and let ${\mathcal A}$ and ${\mathcal N}$ be the Lie subalgebra of $A$ and $N$ respectively. 
If $X\in {\mathcal N}$ is such that $\exp tX=u_t$, $t\in\R$, then from above, $X$ belongs to the center of 
${\mathcal A}\oplus{\mathcal N}$, the Lie algebra of $AN$. This leads to a contradiction as the centraliser of ${\mathcal A}$ in 
${\mathcal A}\oplus{\mathcal N}$ is ${\mathcal A}$ \cite[Lemma 6.50]{Kn}. Therefore, $u_t=e$, and hence 
$T'_t=\Id$, for each $t$.

\noindent{\bf Step 3:} Now we have from Steps 1 and 2 that $T_t|_R=\Id$ and $T_t$ acts trivially on $G/R$ for each $t$, where $R$ is
the radical of $G$. If $R$ is central in $G$, then the preceding assertions would imply that each $T_t$ acts trivially on $\ol{[G,G]}$, the 
closure of the commutator subgroup of $G$, and as $G=\ol{[G,G]}R$, we get that $T_t=\Id$ in this case and, in particular, when $G$ 
is semisimple. Now we want to prove that $T_t=\Id$, for each $t$ in the general case.

For the natural projection $\eta:G\to G/R$, the subgroup $AN$ of $G/R$ and the subgroup $B=\eta^{-1}(AN)$ of $G$ as 
in Step 2, let $B_x=xBx^{-1}$, for a fixed $x\in G$. Then $B_x$ is a closed connected solvable subgroup of $G$, 
 $\eta(B_x)=\eta(x)AN\eta(x)^{-1}$ is also a closed connected solvable group of $G/R$, and $B_x$ is $T_t$-invariant, as $T_t$ acts 
 trivially on $G/R$, for each $t$. Since $B_x$ is a conjugate of $B$, it does not have any compact central subgroup of positive 
 dimension. Now by Proposition \ref{solv}, we get that the restriction of each $T_t$ to $B_x$ is trivial. Since this is true for every 
$x\in G$ and the closed subgroup generated by $H=\cup_{x\in G}\eta(B_x)$ is a closed normal subgroup containing $AN$ in $G/R$, 
we have that $H$ contains all the non-compact simple factors of $G/R$. Therefore, each $T_t$ is trivial on a closed co-compact 
normal subgroup $\eta^{-1}(H)$ in $G$ which contains $R$. That is, if $G/R$ has no nontrivial compact simple factors, then 
$T_t=\Id$, for each $t$. 

\noindent{\bf Step 4:} Let $S'$ be the product of all compact simple factors of $G/R$ and let $G'=\eta^{-1}(S')$. Then $G'$ is a closed 
characteristic (normal) subgroup in $G$. Let $G'=S_cR$ be the Levi decomposition of $G'$. Then $S_c$ is compact and it is the 
product of all simple compact factors of a Levi subgroup in $G$. As $G=G'\eta^{-1}(H)=S_c\eta^{-1}(H)$, and $T_t$ acts trivially on 
$\eta^{-1}(H)$, it is enough to show that $T_t|_{S_c}=\Id$. 

Here, $G'=S_cR$ is a closed subgroup. As $G'$ is characteristic, it is $T_t$-invariant for each $t$. Let $[S_c,R]$ be the group 
generated by $\{xyx^{-1}y^{-1}\mid x\in S_c, y\in R\}$. By 
Theorem 3.2 of Ch.\ XI of \cite{Ho}, $\ol{[S_c,R]}\subset N$; here $N$ denotes the nilradical of $R$, which is the same as the 
nilradical of $G$. Note that $\ol{[R,R]}\subset N$ \cite[Proposition 3.11]{Rag}. Let $G'_1$ be the closure of the group $[G',G']$. Then 
$S_c\subset G'_1\subset S_cN$. As $G'_1$ is $T_t$-invariant, we have that $T_t(S_c)\subset S_cN$. Since each $T_t$ acts trivially on 
$G'/R$, we have that for $x\in S_c$, $x^{-1}T_t(x)\in (S_c\cap R)N$. As $S_c\cap R$ is a finite subgroup and $S_c$ is connected,
 $T_t(x)\in xN$ for all $x\in S_c$ and for all $t$. Note that the nilradical $N$ of $G$ is simply connected and has no nontrivial compact 
 subgroups. Let $K$ be a maximal compact connected abelian subgroup of $S_c$. If possible, suppose for some $t$, $T_t|_K\ne\Id$. 
 Let $S_{T_t}$ be the stabiliser of $T_t$. Then $K\cap S_{T_t}$ is a proper closed subgroup of $K$. Now as $K/(K\cap S_{T_t})$ is 
 a nontrivial compact connected abelian group, it is monothetic and hence, there exists $x\in K$ such that the image of  $x$ in 
 $K/(K\cap S_{T_t})$ generats a dense infinite group. In particular, $G_x\cap S_{T_t}=\{e\}$. As $T_t(x)\in xN$, $T_t|_N=\Id$ and $N$ 
 has no nontrivial compact subgroups, by Lemma \ref{discrete}, $T_t\not\in\nc$, which leads to a contradiction. Therefore, $T_t|_K=\Id$ 
 for all $t$. Since the union of all maximal compact connected abelian subgroups of $S_c$ is dense in $S_c$, we have that $T_t|_{S_c}=\Id$ 
 for all $t$. As observed above, this shows that $T_t=\Id$ for all $t$. As observed at the end of Step 1, this implies that $T$ is contained in
 a compact subgroup of $G$. Therefore, \ $(i)\Rightarrow (iv)$ and $(i) - (iv)$ are equivalent. 
\end{proof}

We know from Remark \ref{rem1} that all automorphisms of connected compact abelian Lie groups of dimension greater than or equal to 2 
belong to class (NC) and, in particular, this hold for nontrivial unipotent automorphisms of such groups. The following theorem shows 
that a connected Lie group $G$ does not admit any nontrivial unipotent automorphism which acts distally on $\Sub^a_G$. 

\begin{thm} \label{unip} Let $G$ be a connected Lie group and let $T\in \Aut(G)$ be unipotent. Then $T$ acts distally 
on $\Sub^a_G$ if and only if $T=\Id$. 
\end{thm}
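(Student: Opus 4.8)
The forward implication is trivial: if $T=\Id$ then $T$ acts as the identity homeomorphism of $\Sub^a_G$, which is vacuously distal. For the converse I assume $T$ is unipotent and acts distally on $\Sub^a_G$, and aim to show $T=\Id$. Two standing facts will be used repeatedly. First, a unipotent automorphism is distal on $G$ and all eigenvalues of $\du T$ equal $1$ (cf.\ \cite{A1,A2}). Second, for any closed $T$-invariant subgroup $H\subset G$ the space $\Sub^a_H$ is a closed $T$-invariant subspace of $\Sub^a_G$, so $T|_H$ inherits distality on $\Sub^a_H$; moreover $T\in\nc$, since distality on $\Sub^a_G$ implies membership in $\nc$. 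The governing idea is to reduce to the solvable case settled in Proposition \ref{solv}\,$(i)$ and then climb through the semisimple quotient, invoking part $(i)$ rather than $(ii)$ precisely because $(i)$ carries no hypothesis on compact central subgroups; this is what lets me drop that assumption, present in Theorem \ref{main}.

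The plan proceeds in stages. First I treat the solvable radical $R$ of $G$: it is characteristic, hence $T$-invariant, and $T|_R$ is unipotent and distal on $\Sub^a_R$, so Proposition \ref{solv}\,$(i)$ gives $T|_R=\Id$; if $G$ is solvable we are done. Otherwise set $G_s=G/R$, a connected semisimple Lie group, let $\eta:G\to G_s$ be the projection, and let $\T$ be the induced unipotent automorphism. Since $\T\in(\Aut G_s)^0=\Inn(G_s)$, I write $\T=\inn(u)$ with $u$ Ad-unipotent. To force $\T=\Id$ I suppose $\Ad(u)\ne\Id$ and seek a contradiction: enclose $u$ in a maximal connected Ad-unipotent subgroup, which by conjugacy of such subgroups (Lemma 4.4 of \cite{DS}) may be taken to be $N$ for an Iwasawa decomposition $G_s=KAN$. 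Then $AN$ is $\T$-invariant, so $R'=\eta^{-1}(AN)$ is a closed connected solvable $T$-invariant subgroup, and Proposition \ref{solv}\,$(i)$ forces $T|_{R'}=\Id$. Hence $u$ centralises $AN$, whereas the centraliser of ${\mathcal A}$ in ${\mathcal A}\oplus{\mathcal N}$ is ${\mathcal A}$ (Lemma 6.50 of \cite{Kn}); as $u\in N$ this yields $u=e$, a contradiction. Thus $\T=\Id$, i.e.\ $T(x)\in xR$ for every $x\in G$.

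The remaining, and hardest, stage is to upgrade ``$T|_R=\Id$ and $\T=\Id$'' to $T=\Id$ on $G$, mirroring Steps 3--4 of the proof of Theorem \ref{main} but with Proposition \ref{solv}\,$(i)$ in place of $(ii)$. For the non-compact simple factors I note that for each $x$ the group $\eta^{-1}(\eta(x)AN\eta(x)^{-1})=xR'x^{-1}$ is again closed, connected, solvable and $T$-invariant (using $T(x)\in xR$), so Proposition \ref{solv}\,$(i)$ makes $T$ trivial on it; since the conjugates of $AN$ generate the product of all non-compact simple factors of $G_s$, $T$ is trivial on a co-compact normal subgroup $\eta^{-1}(H)\supset R$. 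For the compact simple factors I take a Levi decomposition $G'=S_cR$ of $G'=\eta^{-1}(S')$, with $S'$ the product of the compact simple factors; the commutator structure ($\ol{[S_c,R]},\ol{[R,R]}\subset N$) forces $T(x)\in xN$ for $x\in S_c$, where the nilradical $N$ is simply connected with no nontrivial compact subgroups. If $T$ were nontrivial on some maximal compact connected abelian $K\subset S_c$, then the monothetic group $K/(K\cap S_T)$ would produce an element $x$ with $G_x\cap S_T=\{e\}$ and $T(x)\in xN$, whence Lemma \ref{discrete} gives $T\notin\nc$, contradicting distality on $\Sub^a_G$. Therefore $T|_{S_c}=\Id$, and since $G=S_c\,\eta^{-1}(H)$ I conclude $T=\Id$.

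The main obstacle I anticipate is exactly this last bootstrapping: triviality of $T$ on $R$ and on $G/R$ does not by itself give triviality on $G$, and the ``$R$-valued part'' $x\mapsto x^{-1}T(x)$ must be killed using the full Levi/Iwasawa structure together with the non-contraction input of Lemma \ref{discrete}. The delicate bookkeeping is to verify at each reduction that the solvable subgroup I restrict to is genuinely $T$-invariant, so that Proposition \ref{solv}\,$(i)$ applies, and to place the implication ``distal on $\Sub^a_G\Rightarrow T\in\nc$'' exactly where Lemma \ref{discrete} is invoked.
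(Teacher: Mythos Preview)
Your argument has a genuine gap in the final stage, the treatment of the compact simple factors. You assert that ``the nilradical $N$ is simply connected with no nontrivial compact subgroups'', but this is precisely what fails once $G$ has a nontrivial compact connected central subgroup $C$: then $C\subset N$ and $N$ is not simply connected. Since Lemma~\ref{discrete} requires its subgroup $H$ to contain no nontrivial compact subgroup, you cannot invoke it with $H=N$ as written. Your earlier stages --- triviality of $T$ on $R$, on $G/R$, and on each conjugate $xR'x^{-1}$ --- are correct, because there Proposition~\ref{solv}\,$(i)$ applies directly and no structural hypothesis on $N$ is needed; the slip occurs only at the last step, and it occurs exactly where the hypothesis of Theorem~\ref{main} that you set out to discard is actually used in that proof.

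The paper's proof takes a much shorter route that sidesteps this issue entirely. After obtaining $T|_R=\Id$ via Proposition~\ref{solv}\,$(i)$, it passes to $G/C$: by Lemma~\ref{abelian} the induced automorphism $\T$ lies in $\nc$ on $G/C$, and since $G/C$ has no compact central subgroup of positive dimension, Theorem~\ref{main} (used as a black box) puts $\T$ in a compact subgroup, whence $\T=\Id$ as it is unipotent. The conclusion then follows from a one-line commutator observation: $T(x)\in xC$ with $C$ central forces $T([g,h])=[T(g),T(h)]=[g,h]$, so $T$ is trivial on $\ol{[G,G]}$, and $G=\ol{[G,G]}\,R$ finishes it. Your approach can be repaired by running the compact-factor step in $G/C$ instead (where the nilradical $N/C$ \emph{is} simply connected and Lemma~\ref{abelian} supplies the $\nc$ input needed for Lemma~\ref{discrete}), but as written the argument does not go through.
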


\begin{proof} 
Let $G$ and $T$ be as in the hypothesis.  One way implication is obvious. 
Now suppose $T$ acts distally on $\Sub^a_G$. We show that $T=\Id$.

Let $R$ be the solvable radical of $G$. Then $R$ is $T$-invariant and by Proposition \ref{solv}, $T|_R=\Id$. 
Let $C$ be the largest compact connected central subgroup. If $C$ is trivial, then the assertion follows from Theorem \ref{main} as the only 
unipotent automorphism contained in a compact subgroup of $\Aut(G)$ is the identity map. 
Suppose $C$ is nontrivial. Let $\T$ be the automorphisms of $G/C$ corresponding to $T$. Then by Lemma \ref{abelian}, 
$\T\in\nc$. Note that $\T$ is also unipotent. Now by Theorem \ref{main}, $\T$ is contained in a compact subgroup 
of $\Aut(G/C)$, and being unipotent, $\T$ is trivial. As $C$ is connected and central, it follows that
$T$ acts trivially on $\ol{[G,G]}$. Now as $G=\ol{[G,G]}R$, we get that $T=\Id$. \end{proof}

The following theorem shows that a part of Theorem \ref{main} can be generalised to all connected Lie groups if we restrict the class 
of automorphisms to $(\Aut(G))^0$, the connected component of the identity in $\Aut(G)$. Example \ref{ex} illustrates that 
Theorem \ref{main} can not be generalised fully even if we restrict to $(\Aut(G))^0$. 

\begin{thm} \label{aut0}
Let $G$ be a connected Lie group and let $T\in\Aut(G)$. Suppose $T$ belongs to $(\Aut(G))^0$, the connected component of 
the identity in $\Aut(G)$. Then $(ii)-(iv)$ in Theorem \ref{main} are equivalent. 
\end{thm}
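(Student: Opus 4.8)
The plan is to prove $(iv)\Rightarrow(iii)\Rightarrow(ii)$ for an arbitrary connected Lie group and then concentrate on $(ii)\Rightarrow(iv)$, where the hypothesis $T\in(\Aut(G))^0$ carries the weight. For $(iv)\Rightarrow(iii)$: if $T$ lies in a compact subgroup $\K$ of $\Aut(G)$, then by Lemma \ref{mco} $\K$ acts continuously on the compact space $\Sub_G$, and a continuous action of a compact group on a Hausdorff space is automatically distal, since for $H_1\ne H_2$ the image of $k\mapsto(k(H_1),k(H_2))$ is compact and misses the diagonal; as $\overline{\{T^n\}}\subset\K$, the $T$-action is distal. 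For $(iii)\Rightarrow(ii)$: $\Sub^a_G$ is a closed $\Aut(G)$-invariant subspace of $\Sub_G$, so distality on $\Sub_G$ restricts to it.

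For $(ii)\Rightarrow(iv)$ I would first set up two reductions. Let $C$ be the largest compact connected central subgroup of $G$; it is characteristic, so $\du T$ preserves $\mathfrak c=\operatorname{Lie}(C)$. The key initial observation is that $T|_C=\Id$: the map $\tau\mapsto\du\tau|_{\mathfrak c}$ is a continuous homomorphism from $\Aut(G)$ into $\GL(\mathfrak c)$ whose image lies in $\Aut(C)\cong\GL(d,\Z)$ ($d=\dim C$), which is discrete, so the connected component $(\Aut(G))^0$ is sent to $\Id$. In particular $T$ acts distally on $C$, and since $(ii)$ gives $T\in\nc$, Corollary \ref{Lie-distal}\,(a) yields that $T$ acts distally on $G$; by Abels \cite{A1,A2} every eigenvalue of $\du T$ then has absolute value $1$. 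The second reduction passes to $G/C$: by Lemma \ref{abelian} the induced automorphism $\T$ belongs to $\nc$ in $\Aut(G/C)$, and $G/C$ has no compact central subgroup of positive dimension, so Theorem \ref{main} places $\T$ in a compact subgroup of $\Aut(G/C)$; hence $\du\T$ on $\G/\mathfrak c$ is semisimple.

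It then remains to upgrade this to semisimplicity of $\du T$ itself: combined with the modulus-$1$ eigenvalues, this makes $\overline{\langle\du T\rangle}$ relatively compact in $\GL(\G)$, and since $\Aut(G)$ is closed in $\GL(\G)$, it places $T$ in a compact subgroup, giving $(iv)$. Suppose instead $\du T$ is not semisimple, and write its additive Jordan decomposition $\du T=s+n$ with $n\ne0$ nilpotent commuting with $s$. Since $\du T|_{\mathfrak c}=\Id$ and $\du\T$ is semisimple, one checks $n(\G)\subset\mathfrak c$, $n|_{\mathfrak c}=0$, and (as $n$ preserves the eigenspaces of $s$ and $\mathfrak c$ lies in the $1$-eigenspace) $n$ is supported on the $1$-eigenspace of $s$. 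Thus there is $X\in\G$ with $s(X)=X$ and $Y:=n(X)\in\mathfrak c\setminus\{e\}$, so $\du T(X)=X+Y$ and $\du T(Y)=Y$. As $Y$ is central, $[X,Y]=0$, so $\mathfrak b=\R X\oplus\R Y$ is a $\du T$-invariant abelian subalgebra on which $\du T$ is unipotent and nontrivial. Letting $B$ be the associated connected abelian (hence solvable) subgroup and $\ol B$ its closure, $\ol B$ is a closed connected $T$-invariant abelian Lie subgroup with $T(\exp tX)=\exp(tX)\exp(tY)$, so $T|_{\ol B}\ne\Id$. By Proposition \ref{solv}\,(i), $T|_{\ol B}$ does not act distally on $\Sub^a_{\ol B}$; since $\ol B$ is closed in $G$, $\Sub^a_{\ol B}$ is a closed $T$-invariant subset of $\Sub^a_G$ on which the actions agree, so $T$ fails to act distally on $\Sub^a_G$, contradicting $(ii)$. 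The main obstacle is precisely this final step: the shear $\du T(X)=X+Y$ points into the compact central torus $C$, so Lemma \ref{discrete}, which requires a target containing no nontrivial compact subgroup, is unavailable; one must exploit the torus directly, which is why the passage to $\ol B$ and the appeal to Proposition \ref{solv}\,(i) are the crux of the argument.
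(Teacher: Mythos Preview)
Your route to $(ii)\Rightarrow(iv)$ is genuinely different from the paper's. The paper uses the almost-algebraic structure of $(\Aut(G))^0$ (Wigner \cite{W1,W2}) to obtain a \emph{multiplicative} Jordan decomposition $T=T_sT_u$ with $T_s$ in a compact group and $T_u$ unipotent, then invokes an external lemma (Lemma~2.2 of \cite{SY2}) to transfer distality on $\Sub^a_G$ from $T$ to $T_u$, and finally kills $T_u$ via Theorem~\ref{unip}. You instead take the \emph{additive} Jordan decomposition $\du T=s+n$, use Theorem~\ref{main} on $G/C$ to force $n(\G)\subset\mathfrak c$ and $n$ to live on $E_1=\ker(s-\Id)$, and then localise the residual shear to a small closed abelian subgroup $\ol B$, where Proposition~\ref{solv}\,(i) yields the contradiction. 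Your approach avoids the external reference \cite{SY2}, at the cost of a more hands-on construction.

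There is, however, one step that needs justification. To apply Proposition~\ref{solv}\,(i) on $\ol B$ you need $T|_{\ol B}$ to be \emph{unipotent}, i.e.\ $\du T$ unipotent on $\operatorname{Lie}(\ol B)$. You establish unipotency only on $\mathfrak b=\R X\oplus\R Y$, but $\operatorname{Lie}(\ol B)$ can strictly contain $\mathfrak b$: for instance, if the one-parameter group $\exp(\R X)$ winds densely in a torus of higher dimension. The fix is short. Write $\pi:G\to G/C$ and $\bar X=\pi_*(X)$; since $\du\T(\bar X)=\bar X$, the automorphism $\T$ fixes the closure $\bar A:=\ol{\exp(\R\bar X)}$ pointwise, so $\operatorname{Lie}(\bar A)\subset\ker(\du\T-\Id)$. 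As $\du\T=\bar s$ is semisimple and $\mathfrak c\subset E_1$, one checks $\ker(\bar s-\Id)=E_1/\mathfrak c$, whence $\operatorname{Lie}(\pi^{-1}(\bar A))\subset E_1$. Since $\pi(\ol B)\subset\ol{\pi(B)}=\bar A$, we get $\ol B\subset\pi^{-1}(\bar A)$ and therefore $\operatorname{Lie}(\ol B)\subset E_1$; on $E_1$ one has $\du T=\Id+n$, which is unipotent. With this addition your argument goes through.
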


\begin{proof} 
Let $G$ be a connected Lie group and let $T\in(\Aut(G))^0$. 
It is enough to show that $(ii)\Rightarrow (iv)$ in Theorem \ref{main}. Suppose $(ii)$ holds, 
i.e.\ $T$ acts distally on $\Sub^a_G$. We want to prove that $T$ is contained in a compact subgroup of $\Aut(G)$. 

Let $C$ be the largest compact connected central subgroup of $G$. Then $C$ is characteristic. As $C$ is compact and abelian, 
$\Aut(C)$ is totally disconnected and hence every element of $(\Aut(G))^0$ acts trivially on $C$. In particular, $T|_C=\Id$. 
Also, by Corollary \ref{Lie-distal}\,$(a)$, $T$ acts distally on $G$. Therefore, the eigenvalues of $\du T$ on the Lie algebra $\G$ 
of $G$ are of absolute value 1. Moreover, $(\Aut(G))^0$ is almost algebraic as a closed subgroup of $\GL(\G)$  \cite{W1, W2};
see also Theorem 2 in \cite{D}. Let $G_T$ be the smallest almost algebraic group in $(\Aut(G))^0$ containing $T$. By Corollary 2.5 
of \cite{A1}, $G_T=\K\times \U$ where $\K$ is compact and abelian, and $\U$ is unipotent and abelian. In fact, $\U$ is a unipotent 
one-parameter group $\{T_t\}$ in $(\Aut(G))^0$, such that $T=T_sT_u=T_uT_s$, $T_s\in\K$ and $T_u=T_1$ is unipotent. 
By Lemma 2.2 of \cite{SY2},  $T_u$ acts distally on $\Sub^a_G$. By Theorem \ref{unip}, $T_u=\Id$. Therefore, $T=T_s$ is 
contained in a compact subgroup of $(\Aut(G))^0$, i.e.\ $(iv)$ holds. 
\end{proof}

The following corollary characterises the class of connected Lie groups $G$ which act distally on $\Sub^a_G$. 
Recall that the action of $G$ on $\Sub_G$ is the same as the action of $\Inn(G)$ on $\Sub_G$, where $\Inn(G)$ is the group of 
inner automorphisms of $G$. 

\begin{cor} \label{pt-distal} Let $G$ be a connected Lie group. Then the following are equivalent.
\begin{enumerate}
\item[$(1)$] Every inner automorphism acts distally on $\Sub^a_G$.
\item[$(2)$] Every inner automorphism acts distally on $\Sub_G$. 
\item[$(3)$] $G$ acts distally on $\Sub^a_G$.
\item[$(4)$] $G$ acts distally on $\Sub_G$. 
\item[$(5)$] Either $G$ is compact or $G=\R^n\times K$, for a compact group $K$ and 
some $n\in\N$. 
\end{enumerate}
Moreover, if $G$ has no compact central subgroup of positive dimension, then the above statements are also equivalent to the following:
\begin{enumerate}
\item[$(6)$] Every inner automorphism of $G$ belongs to $\nc$. 
\end{enumerate}
\end{cor}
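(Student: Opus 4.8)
The plan is to prove the cycle of implications by isolating the one substantive direction $(1)\Rightarrow(5)$ and the construction $(5)\Rightarrow(4)$, with everything else formal. First I would record the easy implications. Since $\Sub^a_G$ is a closed $\Aut(G)$-invariant subset of $\Sub_G$, distality of the $\Inn(G)$-action, or of a single inner automorphism, on $\Sub_G$ restricts to the corresponding distality on $\Sub^a_G$; and distality of a group action trivially forces distality of each element, because the orbit of a cyclic subgroup sits inside the orbit of the full group. This gives $(4)\Rightarrow(3)\Rightarrow(1)$ and $(4)\Rightarrow(2)\Rightarrow(1)$, so it suffices to close the loop by proving $(1)\Rightarrow(5)\Rightarrow(4)$. (Recall that the $G$-action on $\Sub_G$ is the same as the $\Inn(G)$-action.)

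For $(1)\Rightarrow(5)$ the key observation is that $\Inn(G)$, being the continuous image of the connected group $G$, lies in $(\Aut(G))^0$. Hence Theorem~\ref{aut0} applies to every inner automorphism, and $(1)$ is equivalent to the assertion that each $\inn(g)$ is contained in a compact subgroup of $\Aut(G)$; equivalently, every cyclic subgroup of $\Inn(G)\cong G/Z(G)$ is relatively compact. I would then prove the structural lemma that a connected Lie group $H$ in which every cyclic subgroup is relatively compact is itself compact. Let $N$ be its nilradical: since a simply connected nilpotent group is torsion-free and has no nontrivial compact subgroup, any nontrivial element of $N/T$ (where $T$ is the maximal central torus of $N$) would generate a discrete infinite, hence non-relatively-compact, cyclic group, so $N=T$ is compact. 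The radical $R$ has $\overline{[R,R]}\subset N$, so $R/N$ is abelian and inherits the hypothesis, forcing it to be a torus; thus $R$ is compact. Finally $H/R$ is semisimple and inherits the hypothesis, so it is compact, as a noncompact semisimple group contains split tori carrying unbounded cyclic subgroups. Therefore $H$, and in particular $H=\Inn(G)=G/Z(G)$, is compact.

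From $G/Z(G)$ compact I would derive the splitting $G\cong\R^n\times K$ with $K$ compact. Writing $A=Z(G)^0\cong\R^n\times\TT^b$ and invoking the classical fact (Grosser--Moskowitz theory of central groups) that $G/Z(G)$ compact forces $\overline{[G,G]}$ to be compact and $G=\overline{[G,G]}\,A$, let $V\cong\R^n$ be the vector part of the center and let $K$ be the compact group generated by $\overline{[G,G]}$ together with the maximal central torus. Then $V$ is central, hence normal; $K$ is normal because $V$ is central and $G=KV$; and $V\cap K=\{e\}$ since $V$ has no nontrivial compact subgroup. Thus $G=K\times V\cong K\times\R^n$, which is exactly $(5)$, with $n=0$ giving the compact case. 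For $(5)\Rightarrow(4)$, note that in both cases $\Inn(G)$ is compact: if $G$ is compact this is clear, and if $G=\R^n\times K$ then $Z(G)=\R^n\times Z(K)$, so $\Inn(G)\cong K/Z(K)$. Since $\Inn(G)\cong G/Z(G)$ is compact and the natural map $G/Z(G)\to\Aut(G)$ is a continuous injection, $\Inn(G)$ is a compact subgroup of $\Aut(G)$, and by the continuity of the action (Lemma~\ref{mco}) its image in $\Homeo(\Sub_G)$ is a compact group of homeomorphisms. A compact group acting on the compact Hausdorff space $\Sub_G$ acts distally, because the orbit $\{(\gamma H_1,\gamma H_2)\mid\gamma\in\Inn(G)\}$ is already closed and meets the diagonal only when $H_1=H_2$; this yields $(4)$.

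It remains to add $(6)$ under the extra hypothesis. If $G$ has no compact central subgroup of positive dimension, then Theorem~\ref{main} applies to each inner automorphism and gives $\inn(g)\in\nc$ if and only if $\inn(g)$ acts distally on $\Sub^a_G$; quantifying over $g\in G$ shows $(6)\Leftrightarrow(1)$, so $(6)$ joins the list of equivalent statements (consistently with Corollary~\ref{alm-conn}). The main obstacle is the structural lemma together with the passage from $G/Z(G)$ compact to the direct-product splitting $G\cong\R^n\times K$; once that structure is in hand, every remaining step is either formal or a direct appeal to Theorems~\ref{aut0} and~\ref{main}.
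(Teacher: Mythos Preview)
Your reduction to $(1)\Rightarrow(5)\Rightarrow(4)$, the treatment of $(5)\Rightarrow(4)$ via compactness of $\Inn(G)$, and the handling of $(6)$ via Theorem~\ref{main} are all correct and agree with the paper. The substantive problem lies in your argument for $(1)\Rightarrow(5)$.

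The gap is the asserted equivalence ``each $\inn(g)$ is contained in a compact subgroup of $\Aut(G)$; equivalently, every cyclic subgroup of $\Inn(G)\cong G/Z(G)$ is relatively compact.'' For this you need the injection $G/Z(G)\hookrightarrow\Aut(G)$ to be proper, i.e.\ $\Inn(G)$ closed in $\Aut(G)$, and that fails in general. A concrete failure of the pointwise statement: take $G=\R\ltimes\R^4$ where $t\in\R$ acts by the block rotation $e^{tA}$ with $A=\alpha J\oplus\beta J$, $J=\left(\begin{smallmatrix}0&-1\\1&0\end{smallmatrix}\right)$, and $\alpha/\beta$ irrational. Here $Z(G)=\{e\}$, the element $g=(1,0)$ has $\inn(g)$ contained in the compact torus $\SO(2)\times\SO(2)\subset\Aut(G)$, yet $\{g^n\}=\{(n,0)\}$ is unbounded in $G=G/Z(G)$. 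So the conclusion of Theorem~\ref{aut0} does not, element by element, verify the hypothesis of your structural lemma on $G/Z(G)$, and the lemma cannot be applied as written.

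The paper's route for $(1)\Rightarrow(5)$ sidesteps this by working directly in $G$. It first uses Corollary~\ref{alm-conn} (so $G$ is distal) together with the Rosenblatt--Jenkins results to obtain $G/R$ compact. Then, for $x$ in the nilradical $N$, $\inn(x)$ is unipotent and acts distally on $\Sub^a_G$, so Theorem~\ref{unip} gives $\inn(x)=\Id$; hence $N$ is central, $R=N$ is central, and Iwasawa's splitting lemma (Lemma~3.7 of \cite{I}) produces $G\cong K\times\R^n$. Your strategy can be repaired along these lines: apply Theorem~\ref{unip} (or the observation that $\Ad(x)$ is simultaneously unipotent and contained in a compact linear group, hence the identity) directly to elements of $N$ to force $R\subset Z(G)$. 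At that point $G/Z(G)$ is semisimple with trivial center, so it \emph{does} embed as a closed subgroup of $\Aut(G)$, and only then can your structural lemma be legitimately invoked to conclude $G/Z(G)$ is compact. After this fix your argument essentially converges to the paper's; the Grosser--Moskowitz step can also be replaced by the simpler appeal to Iwasawa's lemma used there.
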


The condition on the center of $G$ in the above corollary is necessary for the equivalence of the statement $(6)$ with $(1-5)$ as illustrated 
by Example \ref{ex} below.

\begin{proof} [Proof of Corollary \ref{pt-distal}.] Let $G$ be a connected Lie group.
$(4)\Rightarrow (3)\Rightarrow (1)$ and $(4)\Rightarrow (2)\Rightarrow (1)$ obviously hold. We also have that $(5)\Rightarrow (4)$, as 
$\Inn(G)=\{\inn(k)\mid k\in K\}$ is compact. To show that $(1)-(5)$ are equivalent, it is enough to show that $(1)\Rightarrow (5)$. 
Suppose $(1)$ holds, i.e.\ every inner automorphism of $G$ acts distally on $\Sub^a_G$. We want to show that $(5)$ holds 

Suppose $G$ is not compact. We need to show that $G$ is a product of $\R^n$ and a compact subgroup 
for some $n\in\N$. As every inner automorphism acts distally on $\Sub^a_G$, by 
Corollary \ref{alm-conn}, $G$ is distal. By Theorem 9 of \cite{Ro} and Corollary 2.1\,(ii) of \cite{Je}, $G/R$ is compact where 
$R$ is the solvable radical of $G$. Let $N$ be the nilradical of $G$. Let $C$ be the largest compact normal subgroup of $N$. 
Then $C$ is connected and central in $G$ and $N/C$ is simply connected and nilpotent. Note that every inner automorphism of $G$ 
by an element of $N$ is unipotent, i.e.\ $\Ad(x)$ is unipotent for all $x\in N$. By Theorem \ref{unip}, $\inn(x)=\Id$ for every $x\in N$ 
i.e.\ $N$ is abelian and central in $G$. In particular, since $R/N$ is abelian, $R$ is nilpotent, and hence $R=N$. Therefore, $R$ is 
central in $G$. Now $G$ is a compact extension of $Z^0$, the connected component of the center $Z$ of $G$. Let $Z^0=\R^n\times C$, 
where $C$ is compact and, $n\in\N$ as $G$ is non-compact. Then the subgroup $V=\R^n$ is a (nontrivial) vector group which is central. 
Using Lemma 3.7 of \cite{I}, we get that $G=\R^n\times K$, where $K$ is the maximal compact subgroup of $G$, i.e.\ $(5)$ holds.

If $G$ does not have any compact central subgroup of positive dimension, by Theorem \ref{main}, we get that 
$(6) \Leftrightarrow (1)$, and hence the last assertion follows. 
\end{proof}

An automorphism $T$ of a locally compact group $G$ is said to have bounded orbits if all its orbits are relatively compact, i.e.\ for every 
$x\in G$, $\{T^{n}(x)\}_{n\in\Z}$ is relatively compact. In particular, if $T$ is contained in a compact subgroup of $\Aut(G)$, 
then $T$ has bounded orbits. The following result is for any automorphism on a general connected Lie group which acts 
distally on $\Sub^a_G$. 

\begin{cor} \label{Lie}
Let $G$ be a connected Lie group and let $T\in\Aut(G)$. If $T$ acts distally on $\Sub^a_G$, then $T$ has bounded orbits. 
\end{cor}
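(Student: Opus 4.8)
The plan is to reduce the statement to the quotient of $G$ by its largest compact connected central subgroup, where Theorem \ref{main} applies in full strength, and then to lift bounded orbits back to $G$ through the (proper) quotient map.

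Let $C$ be the largest compact connected central subgroup of $G$; it is characteristic, hence $T$-invariant, and let $\T\in\Aut(G/C)$ be the induced automorphism. First I would record that $G/C$ has no compact central subgroup of positive dimension (the same fact already used in the proof of Corollary \ref{Lie-distal}). Since $T$ acts distally on $\Sub^a_G$ and $C$ is central, Lemma \ref{abelian} gives $\T\in\nc$ in $\Aut(G/C)$. Now the equivalence $(i)\Longleftrightarrow(iv)$ of Theorem \ref{main} applies to $G/C$ and yields that $\T$ is contained in a compact subgroup of $\Aut(G/C)$; in particular $\T$ has bounded orbits on $G/C$.

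To finish, let $\pi:G\to G/C$ be the canonical projection and fix $x\in G$. The orbit $\{\T^n(\pi(x))\}_{n\in\Z}=\pi(\{T^n(x)\}_{n\in\Z})$ is relatively compact in $G/C$; let $D$ be its (compact) closure. Since $C$ is compact, $\pi$ is a proper map, so $\pi^{-1}(D)$ is compact in $G$. As $\{T^n(x)\}_{n\in\Z}\subset\pi^{-1}(D)$, the orbit of $x$ is relatively compact, and since $x$ is arbitrary, $T$ has bounded orbits.

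The only genuine subtlety — and the point I expect to be the main obstacle — is that mere distality of $T$ on $G/C$, which is all that Corollary \ref{Lie-distal}\,$(b)$ provides directly, does not suffice: a distal automorphism can have unbounded orbits (e.g.\ a nontrivial unipotent one). Thus the crux is invoking the compactness conclusion of Theorem \ref{main} rather than distality alone, which forces me first to verify the $\nc$ hypothesis via Lemma \ref{abelian} and to observe that $G/C$ carries no positive-dimensional compact central subgroup. The subsequent lifting step is then routine, resting only on the properness of the quotient by a compact subgroup.
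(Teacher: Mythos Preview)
Your proof is correct and follows essentially the same route as the paper: pass to $G/C$, use Lemma~\ref{abelian} to get $\T\in\nc$, apply Theorem~\ref{main} to obtain that $\T$ lies in a compact subgroup (hence has bounded orbits), and then lift back using compactness of $C$. The paper compresses your final lifting paragraph into a single clause (``$T$ itself has bounded orbits since $C$ is compact''), but the argument is the same.
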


\begin{proof}
Suppose $T$ acts distally on $\Sub^a_G$. Let $C$ be the largest compact connected central subgroup of $G$. Then $C$ is characteristic 
in $G$ and $G/C$ does not have any compact central subgroup of positive dimension. Moreover, by Lemma \ref{abelian}, the automorphism 
$\T$ of $G/C$ corresponding to $T$ belongs to $\nc$ in $\Aut(G/C)$ and  by Theorem \ref{main}, we get that $\T$ has bounded orbits. 
Therefore, $T$ itself has bounded orbits, since $C$ is compact. 
\end{proof}

The converse of Corollary \ref{Lie} does not hold as illustrated by the following example. The example also illustrates that in 
Theorem \ref{unip}, the statement that a unipotent automorphism $T$ of a connected Lie group $G$  acts distally on $\Sub^a_G$ can 
not be replaced by  the statement that $T$ belongs to the class (NC). It also shows that Theorem \ref{main} can not be 
completely generalised to any connected Lie group $G$ even if we restrict the class of automorphisms to $(\Aut(G))^0$, as in 
Theorem \ref{aut0}. Moreover, the example also illustrates that the condition on the center in Corollary \ref{pt-distal} is necessary for 
the statement $(6)$ to be equivalent to $(1-5)$ in the theorem.

\begin{example} \label{ex}
Let $G=\HH/D$, where $\HH$ is the 3-dimensional Heisenberg group $($the group of strictly upper triangular - unipotent - matrices in 
$\SL(3,\R))$ and $D$ is a discrete central subgroup of $\HH$ isomorphic to $\Z$. Then $G$ is a connected step-2 nilpotent Lie group, 
its center $Z$ of $G$ is compact and $G/Z\cong\R^2$. Let $T$ be any nontrivial inner automorphism of $G$, i.e.\ $T=\inn(x)$ for some 
$x\not\in Z$, Note that $T$ has bounded orbits as $T^n(g)\in gZ$, $n\in\Z$, for every $g\in G$. As $G$ is connected and nilpotent, 
$T$ is nontrivial and unipotent and Theorem \ref{unip} implies that $T$ does not act distally on $\Sub^a_G$. However, 
$T$ belongs to (NC). For if $A$ is a discrete cyclic subgroup of $G$, it is either finite and hence central in $G$, in which case 
it is $T$-invariant, or its image in $G/Z$ is a discrete infinite cyclic group and as $T$ acts trivially on $G/Z$, it follows that $T\in\nc$. 
Also $T\in (\Aut(G))^0$ and in fact, it generates a non-compact subgroup in $(\Aut(G))^0$. Moreover, every inner automorphism of 
$G$ belongs to $\nc$. 
\end{example}

The map $T$ in the above example acts trivially on the central torus and it also acts trivially on the quotient group modulo the central torus. 
Such maps on connected Lie groups are known as (isotropic) shear automorphisms (see \cite{D}). 

\begin{rem} \label {rem2} $(1)$ For the unit circle $\Sc^1$, $\Aut(\Sc^1)$ is finite. Therefore, using Corollary 2.5 of \cite{PW}, 
Corollary \ref{Lie-distal} and Theorem \ref{aut0}, one can show that $(ii)-(iv)$ of Theorem \ref{main} are equivalent for a somewhat 
larger class of connected Lie groups whose maximal torus is central and it is of dimension at most 1. This class includes connected 
nilpotent Lie groups whose largest compact (connected) central subgroup is of dimension 1, e.g.\ $G$ as in Example \ref{ex}. 
It also includes the Lie group of the form $\SL(2,\R)\ltimes G$, where $G$ is as in the above example. $(2)$ It would be interesting to 
not only identify the spaces $\Sub^a_G$ and $\Sub_G$, but describe the action of $\Aut(G)$ on them, as it is done in \cite{BHK} for 
some subspaces of $\Sub_\HH$, for the 3-dimensional Heisenberg group $\HH$. $(3)$ In case of compact abelian $($Lie$)$ groups $G$, 
we study the action of unipotent automorphisms. It would be interesting to study the action of a general $T$ in $\Aut(G)$ on 
$\Sub_G$ for such compact groups $G$, as it would give more insight for the action of $\Aut(G)$ on $\Sub_G$ for a general connected 
Lie group $G$. $(4)$ For general locally compact groups $G$, $($especially for totally disconnected groups$)$, a comprehensive study 
is needed for  the action of $\Aut(G)$ on $\Sub_G$. 
\end{rem}

We conclude with a mention of the following: Using many results in this paper, further study of class (NC) and the distality of actions of
automorphisms of $G$ on $\Sub_G$ have been carried out where $G$ is a certain type of locally compact group or a lattice in a
Lie group \cite{PaS, PPS}. Expansivity of the action of automorphisms of $G$ on $\Sub_G$ has also been studied for 
locally compact groups and lattices in Lie groups recently  \cite{PrS, PPS}.

\medskip
\noindent{\bf Acknowledgements} R.\ Shah would like to thank S.\ G.\ Dani for extensive discussions on the structure of semisimple 
Lie groups. R.\ Shah would also like to thank I.\ Chatterji and Universit\'e Nice Sophia Antipolis, France, for hospitality during 
a visit in June 2019 while a part of work was carried out. R.\ Shah would like to acknowledge the MATRICS research grant from 
DST-SERB, Govt.\ of India, which partially supported A.\ K.\ Yadav as a visiting scholar at JNU, New Delhi in 2018. A.\ K.\ Yadav 
would like to thank the Ambedkar University Delhi for an adjunct faculty position in 2018 and Krishnendu Gongopadhyay for a 
visiting position under a research grant from NBHM, DAE, Govt.\ of India, at IISER Mohali in early 2019. We thank the 
anonymous referee for suggestions which led to some improvements in the presentation of the manuscript.

\bigskip\medskip
\noindent{
\advance\baselineskip by 2pt
\begin{tabular}{ll}
Riddhi Shah & \hspace*{1cm}Alok Kumar Yadav \\
School of Physical Sciences(SPS)& \hspace*{1cm}Department of Mathematical Sciences\\
Jawaharlal Nehru University(JNU) & \hspace*{1cm}Indian Institute of Science Education - \\
New Delhi 110 067, India& \hspace*{1cm}and Research (IISER Mohali)\\
rshah@jnu.ac.in& \hspace*{1cm}Mohali 140306, India\\
riddhi.kausti@gmail.com& \hspace*{1cm}alokmath1729@gmail.com
\end{tabular}}
\end{document}